\newtheorem{theorem}{Theorem}[section]
\newtheorem{corollary}[theorem]{Corollary}
\newtheorem{lemma}[theorem]{Lemma}
\newtheorem{fact}[theorem]{Fact}
\newtheorem*{fact*}{Fact}
\newtheorem{proposition}[theorem]{Proposition}
\theoremstyle{definition}
\numberwithin{equation}{section}
\newcommand{\N}{\mathbb N}
\newcommand{\Z}{\mathbb Z}
\newcommand{\F}{\mathbb F}
\newcommand{\Q}{\mathbb Q}
\newcommand{\K}{K}
\newcommand{\Pp}{\mathbb P}
\newcommand{\oK}{\hskip0.8mm\overline{\hskip-0.8mm\K\hskip-0.3mm}\hskip0.3mm}
\newcommand{\PpoK}{\Pp_{\!\!\oK}}
\newcommand{\PpF}{\Pp_{\!\mathbb{F}}}
\newcommand{\PpoKDd}{\Pp_{\!\!\oK}^{\ }{}^{\!\!\!\!\bfs D(\bfs d)}}
\newcommand{\bfs}{\boldsymbol}
\newcommand{\fq}{\F_{\hskip-0.7mm q}}
\def\ifm#1#2{\relax \ifmmode#1\else#2\fi}
\newcommand{\klk}    {\ifm {,\ldots,} {$,\ldots,$}}
\newcommand{\plp}    {\ifm {+\cdots+} {$+\ldots+$}}
\newcommand{\twodots}    {\, .. \,}
\begin{document}

\title[Irreducible smooth complete intersections]{Explicit estimates for
polynomial\\systems defining irreducible smooth\\complete
intersections}%
\author[von zur Gathen \& Matera]{
Joachim von zur Gathen${}^{1}$,
Guillermo Matera${}^{2,3}$,
}

\address{${}^{1}$B-IT\\
  Universit\"{a}t Bonn\\
  D - 53113 Bonn}
  \email{gathen@bit.uni-bonn.de}

\address{${}^{2}$Instituto del Desarrollo Humano,\\
Universidad Nacional de Gene\-ral Sarmiento, J.M. Guti\'errez 1150
(B1613GSX) Los Polvorines, Buenos Aires, Argentina}
\email{gmatera@ungs.edu.ar}
\address{${}^{3}$ National Council of Science and Technology (CONICET),
Ar\-gentina}

\thanks{JvzG acknowledges the support of the B-IT Foundation and the Land
Nordrhein-Westfalen. GM is partially supported by the grants UNGS
30/3084, PIP CONICET 11220130100598 and PIO CONICET-UNGS 14420140100027.}%
%
%
\keywords{Finite fields, polynomial systems, complete intersections,
nonsingularity, absolute irreducibility}%

\date{\today}%
\begin{abstract}
This paper deals with properties of the algebraic variety defined as
the set of zeros of a ``typical'' sequence of polynomials. We
consider various types of ``nice'' varieties: set-theoretic and
ideal-theoretic complete intersections, absolutely irreducible ones,
and nonsingular ones. For these types, we present a nonzero
``obstruction'' polynomial of explicitly bounded degree in the coefficients of
the sequence that vanishes if its variety is not of the type. Over
finite fields, this yields bounds on the number of such sequences.
We also show that most sequences (of at least two polynomials)
 define a degenerate variety,
namely an absolutely irreducible
nonsingular hypersurface in some linear projective subspace.
\end{abstract}

\maketitle
%
%
\section{Introduction}
Over a field $K$,  a sequence $\bfs f = (f_1\klk f_s)$ of
homogeneous polynomials in $n+1$ variables
with $n>s$ defines a projective
variety $V \subseteq \Pp_{\!\!K}^n$, namely, its set of common roots.
Intuitively, most such sequences are regular and $V$ enjoys ``nice''
properties, such as being a set-theoretic or ideal-theoretic
complete intersection, being (absolutely) irreducible, and
nonsingular. This paper confirms this intuition in a quantitative
way.

For a fixed pattern $(d_1,\ldots,d_s)$ of degrees $d_i = \deg f_i$,
the set of all such $\bfs f$ forms a multiprojective space in a
natural fashion. For properties as above, we provide a nonzero
``obstruction polynomial'' $P$ of explicitly bounded degree in
variables corresponding to the coefficients in $\bfs f$ such that
any $\bfs f$ with $P(\bfs f) \neq 0$ enjoys the property. Thus
``most'' sequences define a nice variety.

If $K$ is finite with $q$ elements, we obtain as a consequence
bounds on the probability that the variety is nice.
They have the form $1-O(q^{-1})$ with explicit constants
 depending on the geometric data, but not on $q$.

For each property, we
first present an obstruction polynomial as above that works for any
field. From this, we derive numerical estimates in the case of
finite fields.

Section \ref{section:notations} provides some notational background.
In Sections \ref{section:complete} and \ref{section:irreducible} we
fix the degree sequence of our polynomial sequence and study four
geometric properties of the corresponding projective variety in the
appropriate projective space: being a set-theoretic or an
ideal-theoretic complete intersection, absolute irreducibility and
nonsingularity. For these properties, we present a nonzero
``obstruction'' polynomial of bounded degree in variables
corresponding to the coefficients of the polynomial sequence that
vanishes if the corresponding variety is not of the type; see the
``geometric'' Theorems \ref{th: V is set-theoret complete int},
\ref{th: V is ideal-theoret complete int}, \ref{th: V is
irreducible} and \ref{th: V is smooth}. These results show that a
typical sequence of polynomials is regular and defines an ideal-theoretic complete
intersection which is absolutely irreducible and nonsingular.

We then apply the bounds to polynomial sequences over finite fields
to obtain numerical results, which may also be interpreted as
probabilities for sequences chosen uniformly at random. Let $\fq$ be
the finite field with $q$ elements, where $q$ is a prime power.
Multivariate polynomial systems over $\fq$ arise in connection with
many fundamental problems in cryptography, coding theory, or
combinatorics; see, e.g., \cite{WoPr05}, \cite{DiGoSc06},
\cite{CaMaPr12}, \cite{CeMaPePr14}, \cite{MaPePr14}. A random
multivariate polynomial system over $\fq$ with more equations than
variables is likely to be unsolvable over $\fq$. On the other hand,
when there are more variables than equations, the system is likely
to be solvable over $\fq$ (see \cite{FuBa09} for the phase
transition between these two regimes).

Further information can be obtained if the projective variety
$V\subset\PpF^n$ defined by $f_1\klk f_s$ possesses ``nice''
geometric properties. The projective variety $V$ is the set of
common zeros of $f_1\klk f_s$ in the $n$--dimensional projective
space $\PpF^n$ over an algebraic closure $\mathbb{F}$ of $\fq$.
Indeed, if $V$ is known to be a nonsingular or an absolutely
irreducible complete intersection, then estimates on the deviation
from the expected number of points of $V$ in $\Pp^n(\fq)$ are
obtained in \cite{Deligne74}, \cite{Hooley91}, \cite{GhLa02a},
\cite{CaMaPr15}, \cite{MaPePr15}. This motivates the study of the
``frequency'' with which such geometric properties arise.

Over finite fields, the geometric theorems plus an appropriate
version of the Weil bound yield bounds on the number of such
sequences of polynomials; see Corollaries \ref{coro: number
ideal-theoret complete inters}, \ref{coro: number abs irred} and
\ref{coro: number of V smooth}. This can be interpreted as
probabilities for polynomial sequences chosen uniformly at random.
The lower bounds tend to 1 with growing field size.

For $s=1$, the variety defined by a single polynomial $f_1\in
\K[X_0\klk X_n]$ is a hypersurface, which is absolutely irreducible
if the polynomial $f_1$ is. Counting irreducible multivariate
polynomials over a finite field is a classical subject which goes
back to the works of \cite{Carlitz63}, \cite{Carlitz65} and
\cite{Cohen68}; see \cite{MuPa13}, Section 3.6, for further
references. In \cite{GaViZi13}, exact formulas on the number of
absolutely irreducible multivariate polynomials over a finite field
and easy--to--use approximations are provided. No results on the
number of sequences of polynomials $f_1\klk f_s$ over a finite field
defining an absolutely irreducible projective variety are known to
the authors.

Concerning nonsingularity over an arbitrary field $\K$, the set of
all $s$--tuples of homogeneous polynomials $f_1\klk f_s\in\K[X_0\klk
X_n]$ of degrees $d_1\klk d_s$ defining a projective variety which
fails to be nonsingular of dimension $n-s$ is called the {\em
discriminant locus}. It is well-known that the discriminant locus is
a hypersurface of the space of $s$--tuples $f_1\klk f_s$ of
homogeneous polynomials of degrees $d_1\klk d_s$; see, e.g.,
\cite{GeKaZe94} for the case of the field of complex numbers. This
hypersurface is defined by a polynomial in the coefficients of the
polynomials $f_1\klk f_s$ which is homogeneous in the coefficients
of each $f_i$. For $s=1$, a well-known result of George Boole asserts
that the discriminant locus has degree $(n+1)(d_1-1)^n$;
see \cite{Cayley45}.
On the other hand, in \cite{Benoist12} an exact formula for the
degrees of the discriminant locus is provided. The calculation is
based on a study of dual varieties of nonsingular toric varieties in
characteristic zero. Then the case of positive characteristic is
dealt with using projective duality theory. Our approach is based on
the analysis of an incidence variety with tools of classical
projective geometry. We do not obtain exact formulas, but
easy--to--use approximations for the homogeneity degrees.

The above results assume a fixed sequence of degrees. When we vary
the degrees, it is natural to keep the B\'ezout number $\delta = d_1
\cdots d_s$ constant. In Section \ref{section:varyingDegree}, we
show that ``most'' polynomial sequences define a degenerate variety,
namely, a hypersurface in some linear projective subspace. Here,
``most'' refers to the dimension of the set of all relevant
polynomial sequences for infinite $K$, and to their number in the
case of finite $K$.

Let $d_1\klk d_s\ge 1$ be given and let $f_1\klk f_s\in K[X_0\klk
X_n]$ be homogeneous polynomials of degrees $d_1\klk d_s$ with
coefficients in an arbitrary field $K$. A basic quantity associated
to $f_1\klk f_s$ is the B\'ezout number $\delta=d_1\cdots d_s$. For
example, for $K=\fq$ the cost of several algorithms for finding a
common zero with coefficients in $\fq$ of $f_1\klk f_s$ is measured
in terms of the B\'ezout number $\delta$ (see, e.g., \cite{HuWo99},
\cite{CaMa06a}, \cite{BaFaSaSp13}). In this sense, it may be
interesting to study geometric properties that can be expected from
a typical sequence $f_1\klk f_s$ of $K[X_0\klk X_n]$ for which only
the B\'ezout number $\delta$ is given. For a given degree pattern
with B\'ezout number $\delta$, the results of the first part of this
paper show that the corresponding projective variety is expected to
be a complete intersection of dimension $n-s$ and degree $\delta$.
Therefore, the situation is somewhat reminiscent of that of the Chow
variety of projective varieties of a given dimension and degree in a
given projective space.

The Chow variety of curves of $\PpoK^n$ of degree $\delta$ over an
algebraic closure $\oK$ of a field $\K$ is considered in
\cite{EiHa92}. It is shown that its largest irreducible component
consists of planar irreducible curves provided that $\delta$ is
large enough. Over a finite field, \cite{CeGaMa13} use this to
obtain estimates, close to $1$, on the probability that a uniformly
random curve defined over a finite field $\fq$ is absolutely
irreducible and planar.
The present paper shows that for a fixed B\'ezout number, a typical
sequence of polynomials with corresponding degree pattern defines an
irreducible hypersurface $V$ in some linear projective subspace of
$\PpoK^n$ (Theorem \ref{th: number abs irred hypersurfaces}). Thus
the points of $V$ span a linear space of dimension $1 + \dim V$,
which is the minimal value unless $V$ is linear. In particular, a
typical $V$ is degenerate. Here, ``typical'' refers to the dimension
of the set of polynomial sequences, fixing the B\'ezout number.
Furthermore, for a finite field we provide nearly optimal bounds on
the number of polynomial sequences that define such degenerate
varieties. This result generalizes the corresponding one of
\cite{CeGaMa13} from curves to projective varieties of arbitrary
dimension.
%
%
\section{Notions and notations}
\label{section:notations}
We collect some basic definitions and facts, 
using standard notions and notations of algebraic geometry,
which can be found in, e.g., \cite{Kunz85} or \cite{Shafarevich94}.
The reader familiar with this material may want to skip ahead to
Section \ref{section:complete}.

Let $\K$ be a field, $\oK$ an algebraic closure, and
$\PpoK^n$ the  $n$--dimensional projective space over $\oK$.
It is endowed with its Zariski topology over
$\oK$, for which a closed set is the zero locus of homogeneous
polynomials of $\oK[X_0,\ldots, X_n]$. We shall also consider the
Zariski topology of $\PpoK^n$ over $\K$, where closed sets are zero
loci of homogeneous polynomials in $\K[X_0\klk X_n]$.

A subset $V\subset \PpoK^n$ is a {\em projective $\K$--variety} if
it is the set $Z(f_1\klk f_s)$ (or $\{f_1=0\klk f_s=0\}$) of common
zeros in $\PpoK^n$ of a family $f_1,\ldots, f_s \in\K[X_0 ,\ldots,
X_n]$ of homogeneous polynomials.

A $\K$--variety $V\subset \PpoK^n$ is $\K$--{\em irreducible} if it
cannot be expressed as a finite union of proper $\K$--subvarieties
of $V$. Further, $V$ is {\em absolutely irreducible} if it is
$\oK$--irreducible as a $\oK$--variety. Any $\K$--variety $V$ can be
expressed as a non-redundant union $V=\mathcal{C}_1\cup
\cdots\cup\mathcal{C}_r$ of irreducible (absolutely irreducible)
$\K$--varieties, unique up to reordering, which are called the {\em
irreducible} ({\em absolutely irreducible}) $\K$--{\em components}
of $V$.

For a $\K$-variety $V\subset\PpoK^n$, its {\em defining ideal}
$I(V)$ is the set of polynomials of $\K[X_0,\ldots, X_n]$ vanishing
on $V$. The {\em coordinate ring} $\K[V]$ of $V$ is defined as the
quotient ring $\K[X_0,\ldots,X_n]/I(V)$. The {\em dimension} $\dim
V$ of a $\K$-variety $V$ is the length $m$ of a longest chain
$V_0\varsubsetneq V_1 \varsubsetneq\cdots \varsubsetneq V_m$ of
nonempty irreducible $\K$-varieties contained in $V$. A
$\K$--variety $V$ is called {\em equidimensional} if all
irreducible $\K$--components of $V$ are of the same dimension $m$;
then  $V$ is of {\em pure dimension} $m$.

The {\em degree} $\deg V$ of an irreducible $\K$-variety $V$ is the
maximum number of points lying in the intersection of $V$ with a
linear space $L$ of codimension $\dim V$, for which $V\cap L$ is
finite. More generally, following \cite{Heintz83} (see also
\cite{Fulton84}), if $V=\mathcal{C}_1\cup\cdots\cup \mathcal{C}_r$
is the decomposition of $V$ into irreducible $\K$--components,
then the degree of $V$ is
$$\deg V=\sum_{1 \leq i \leq r} \deg \mathcal{C}_i.$$
The following {\em B\'ezout
inequality} holds (see \cite{Heintz83}, \cite{Fulton84}, \cite{Vogel84}):
if $V$ and $W$ are $\K$--varieties, then
\begin{equation}\label{eq: Bezout}
\deg (V\cap W)\le \deg V \cdot \deg W.
\end{equation}

Let $V\subset\PpoK^n$ be a projective variety and $I(V)\subset
\oK[X_0,\ldots, X_n]$ its defining ideal. For $x\in V$, the {\em
dimension} $\dim_x V$ {\em of} $V$ {\em at} $x$ is the maximum of
the dimensions of the irreducible components of $V$ that contain
$x$. If $I(V)=(f_1,\ldots,f_s)$, a point $x\in V$ is called {\em
regular} if the rank of the Jacobian matrix $(\partial f_i/\partial
X_j)_{1\le i\le s,0\le j\le n}(x)$ of $f_1,\ldots, f_s$ with respect
to $X_0,\ldots, X_n$ at $x$ is equal to $n-\dim_xV$. Otherwise, the
point $x$ is called {\em singular}. The set of singular points of
$V$ is the {\em singular locus} $\mathrm{Sing}(V)$ of $V$. A variety
is called {\em nonsingular} if its singular locus is empty.
%
%
\subsection{Complete intersections}
If the projective $\K$--variety $V=Z(f_1\klk
f_s)$  defined by homogeneous polynomials $f_1 \klk f_s$ in $\K[X_0\klk X_n]$
is of pure dimension $n-s$, it is a {\em
set-theoretic} {\em complete intersection} (defined over $\K$).
This is equivalent to the sequence
 $(f_1 \klk f_s)$ being a {\em
regular sequence}, meaning that $f_1$ is nonzero and each $f_i$ is neither zero
nor a zero divisor in $\K[X_0\klk X_n]/(f_1\klk f_{i-1})$ for $2\le
i\le s$.  In particular, any
permutation of a regular sequence of homogeneous polynomials
is also regular.

If the ideal $(f_1\klk f_s)$ generated by $f_1\klk f_s$ is radical,
then we say that $V$ is an {\em ideal-theoretic} {\em complete
intersection}, or simply a {\em complete intersection} (defined over
$\K$). The ``radical'' property rules out repeated components and is
the appropriate notion from an algebraic point of view.
 If $V\subset\PpoK^n$ is a complete intersection defined over
$\K$, of dimension $n-s$ and degree $\delta$, and $f_1 \klk f_s$ is
a system of homogeneous generators of $I(V)$, the degrees $d_1\klk
d_s$ depend, up to permutation, only on $V$ and not on the system of
generators (see, e.g., \cite{GhLa02a}, Section 3). Arranging the
$d_i$ in such a way that $d_1\geq d_2 \geq \cdots \geq d_s$, we call
$\bfs d=(d_1\klk d_s)$ the {\em multidegree} of $V$.

According to the B\'ezout inequality (\ref{eq: Bezout}), if
$V\subset\PpoK^n$ is a complete intersection defined over $\K$ of
multidegree $\bfs d=(d_1\klk d_s)$, then $\deg V\le d_1\cdots d_s$.
Actually, a much stronger result holds, namely,
the {\em B\'ezout theorem}:
\begin{equation}\label{eq: Bezout theorem}
\deg V=d_1\cdots d_s.
\end{equation}
See, e.g., \cite{Harris92}, Theorem 18.3, or \cite{SmKaKeTr00}, \S
5.5, page 80.

In what follows we shall deal with a particular class of complete
intersections, which we now define. A $\K$--variety is {\em regular
in codimension $m$} if the singular locus $\mathrm{Sing}(V)$ of $V$
has codimension at least $m+1$ in $V$, namely if $\dim V-\dim
\mathrm{Sing}(V)\ge m+1$. A complete intersection $V$ which is
regular in codimension 1 is called {\em normal}; actually, normality
is a general notion that agrees on complete intersections with the
one we use here. A fundamental result for projective complete
intersections is the Hartshorne connectedness theorem (see, e.g.,
\cite{Kunz85}, Theorem VI.4.2), which we now state. If
$V\subset\PpoK^n$ is a set-theoretic complete intersection defined
over $\K$ and $W\subset V$ is any $\K$--subvariety of codimension at
least 2, then $V\setminus W$ is connected in the Zariski topology of
$\PpoK^n$ over $\K$. For a normal set-theoretic complete
intersection $V$ defined over $\oK$, the subvariety
$W=\mathrm{Sing}(V)\subset V$ has codimension at least 2. Then the
Hartshorne connectedness theorem asserts that $V\setminus W$ is
connected, which implies that $V$ is absolutely irreducible.

The next statement  summarizes several well-known relations among the
concepts introduced above.
\begin{fact}\label{theorem: normal complete int implies irred}
For a projective variety $V\subset\PpoK^n$, the following hold.
\begin{itemize}
  \item If $V$ is an ideal-theoretic complete intersection, then it
  is a set-theoretic complete intersection.
  \item If $V$ is a normal set-theoretic complete intersection, then it is
  absolutely irreducible.
  \item If $V$ is nonsingular, then it is normal.
\end{itemize}
\end{fact}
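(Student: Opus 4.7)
The plan is to treat the three implications separately, each drawing on a structural fact invoked in the text just above the statement.

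For the first bullet, the approach is to unwind definitions. As set up in the paper, an ideal-theoretic complete intersection is a set-theoretic complete intersection (pure dimension $n-s$) for which the generating ideal is in addition radical, so the implication is immediate from the definition. If one prefers a variant convention in which only the radicality of $(f_1\klk f_s)$ is assumed, I would recover pure dimension $n-s$ as follows: Krull's Hauptidealsatz gives $\dim V\ge n-s$ for any $s$ generators, and the Cohen--Macaulay property of $\oK[X_0\klk X_n]$ makes any radical ideal generated by $s$ elements of codimension $s$ unmixed, so every component of $V$ has dimension exactly $n-s$.

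For the second bullet I would apply the Hartshorne connectedness theorem exactly as quoted in the text. Normality means $W=\mathrm{Sing}(V)$ has codimension at least $2$ in $V$, so the theorem yields that $V\setminus W$ is connected in the Zariski topology over $\oK$. Suppose for contradiction that the decomposition $V=\mathcal{C}_1\cup\cdots\cup\mathcal{C}_r$ into absolutely irreducible components has $r\ge 2$. The decisive observation is that a smooth point of $V$ lies on a unique irreducible component, so any point of $\mathcal{C}_i\cap\mathcal{C}_j$ with $i\neq j$ is forced into $W$; hence $V\setminus W$ is the disjoint union of the Zariski-open pieces $\mathcal{C}_i\setminus W$. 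Each piece is nonempty because each $\mathcal{C}_i$ has pure dimension $n-s$, which strictly exceeds $\dim W$. This contradicts connectedness, so $r=1$ and $V$ is absolutely irreducible.

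The third bullet is essentially immediate: if $V$ is nonsingular, then $\mathrm{Sing}(V)=\emptyset$, which has codimension at least $2$ (indeed arbitrarily large) in $V$, so $V$ is regular in codimension $1$ and therefore normal. The only step with genuine content is the second, and even there the crucial geometric input, the Hartshorne connectedness theorem, is cited rather than reproved. The main care to exercise is in passing from connectedness of $V\setminus W$ to irreducibility of $V$: one must check that pairwise intersections of distinct components are swallowed by the singular locus, so that the open pieces $\mathcal{C}_i\setminus W$ really do disconnect $V\setminus W$. I do not foresee any other real obstacle.
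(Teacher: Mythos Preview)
Your proposal is correct and matches the paper's approach: the Fact is stated without a separate proof, and the preceding paragraph in the text already sketches exactly the argument you give for the second bullet (Hartshorne connectedness applied with $W=\mathrm{Sing}(V)$), while the first and third bullets are immediate from the paper's definitions. Your added detail---that distinct components meet only inside $\mathrm{Sing}(V)$, so the $\mathcal{C}_i\setminus W$ disconnect $V\setminus W$---is the natural way to unpack the paper's phrase ``which implies that $V$ is absolutely irreducible.''
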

%
%
%
\subsection{Multiprojective space}
Let $\N=\Z_{\ge 0}$ be the set of nonnegative integers, and let
$\bfs n=(n_1\klk n_s)\in\N^s$. We define $|\bfs n|=n_1\plp n_s$ and
$\bfs n!=n_1!\cdots n_s!$. Given $\bfs \alpha,\bfs \beta\in\N^s$, we
write $\bfs \alpha\ge\bfs \beta$ whenever $\alpha_i\ge\beta_i$ holds
for $1\le i\le s$. For $\bfs d=(d_1\klk d_s)\in\N^s$, the set
$\N_{\bfs d}^{\bfs{n+1}}=\N_{d_1}^{n_1+1}\times\cdots\times
\N_{d_s}^{n_s+1}$ consists of the elements $\bfs a=(\bfs a_1\klk\bfs
a_s)\in\N^{n_1+1}\times\cdots \times\N^{n_s+1}$ with $|\bfs
a_i|=d_i$ for $1\le i\le s$.

We denote by $\PpoK^{\bfs n}$ the multiprojective space $\PpoK^{\bfs
n}= \PpoK^{n_1}\times\cdots\times \PpoK^{n_s}$ defined over $\oK$.
For $1\le i\le s$, let $X_i=\{X_{i,0}\klk X_{i,n_i}\}$ be disjoint sets of
$n_i+1$ variables and let $\bfs X=\{X_1\klk X_s\}$. A {\em
multihomogeneous} polynomial $f\in\K[\bfs X]$ of multidegree $\bfs
d=(d_1\klk d_s)$ is a polynomial which is homogeneous of degree
$d_i$ in $X_i$ for $1\le i\le s$. An ideal $I\subset\K[\bfs X]$ is
{\em multihomogeneous} if it is generated by a family of
multihomogeneous polynomials. For any such ideal, we denote by
$Z(I)\subset\PpoK^{\bfs n}$ the variety defined (over $\K$) as its
set of common zeros. In particular, a hypersurface in $\PpoK^{\bfs
n}$ defined over $\K$ is the set of zeros of a multihomogeneous
polynomial of $\K[\bfs X]$. The notions of irreducibility and
dimension of a variety in $\PpoK^{\bfs n}$ are defined as in the
projective space.
%
%
\subsubsection{Mixed degrees}
We discuss the concept of \emph{mixed degree} of a multiprojective
variety and a few of its properties, following the exposition in
\cite{DaKrSo13}. Let $V\subset\PpoK^{\bfs n}$ be an irreducible
variety defined over $\oK$ of dimension $m$ and let
$I(V)\subset\oK[\bfs X]$ be its multihomogeneous ideal. The quotient
ring $\oK[\bfs X]/I(V)$ is multigraded and its part of multidegree
$\bfs{b}\in\N^s$ is denoted by $(\oK[\bfs X]/I(V))_{\bfs{b}}$. The
{\em Hilbert--Samuel} function of $V$ is the function
$H_V:\N^s\to\N$ defined as $H_V(\bfs{b})=\dim (\oK[\bfs
X]/I(V))_{\bfs{b}}$. It turns out that there exist
$\bfs{\delta}_0\in\N^s$ and a unique polynomial $P_V\in\Q[T_1\klk
T_s]$ of degree $m$ such that $P_V(\bfs{\delta})=H_V(\bfs{\delta})$
for every $\bfs{\delta}\in\N^s$ with $\bfs{\delta}\ge
\bfs{\delta}_0$; see \cite{DaKrSo13}, Proposition 1.8. For
$\bfs{b}\in\N^s_{\le m}$, we define the {\em mixed degree of $V$ of index}
$\bfs{b}$ as the nonnegative integer
$$\deg_{\bfs{b}}(V)=\bfs{b}!\cdot \mathrm{coeff}_{\bfs{b}}(P_V).$$
This notion can be extended to equidimensional varieties and, more
generally, to equidimensional cycles (formal integer linear combinations
of subvarieties of equal dimension) by
linearity.

The Chow ring of $\PpoK^{\bfs n}$ is the graded ring
$$A^*(\PpoK^{\bfs n})=\Z[\theta_1\klk\theta_s]/(\theta_1^{n_1+1}
\klk \theta_s^{n_s+1}),$$
where each $\theta_i$ denotes the class of the inverse image of a
hyperplane of $\PpoK^{n_i}$ under the projection $\PpoK^{\bfs
n}\to\PpoK^{n_i}$. Given a variety $V\subset\PpoK^{\bfs n}$ of pure
dimension $m$, its class in the Chow ring is
$$[V]=\sum_{\bfs{b}}\deg_{\bfs{b}}(V) \theta_1^{n_1-b_1}
\cdots \theta_s^{n_s-b_s}\in A^*(\PpoK^{\bfs n}),$$
where the sum is over all $\bfs{b}\in\N^s_{\le m}$ with $\bfs{b}\le \bfs
n$. This is an homogeneous element of degree $|\bfs n|-m$. In
particular, if $\mathcal{H}\subset\PpoK^{\bfs n}$ is a hypersurface
and $f\in\oK[\bfs X]$ is a polynomial of minimal degree defining
$\mathcal{H}$, then
\begin{equation}\label{eq: definition Chow cycle of a hypersurface}
[\mathcal{H}]=\sum_{1 \leq i \leq s} \deg_{X_i}(f)\,\theta_i ;
\end{equation}
see \cite{DaKrSo13}, Proposition 1.10.

A fundamental tool for estimates of mixed degrees involving
intersections of multiprojective varieties is the following
multiprojective version of the B\'ezout theorem, called the {\em
multihomogeneous B\'ezout theorem}; see \cite{DaKrSo13}, Theorem
1.11. If $V\subset\PpoK^{\bfs n}$ is a multiprojective variety of
pure dimension $m>0$ and $f\in\oK[\bfs X]$ is a multihomogeneous
polynomial such that $V\cap Z(f)$ is of pure dimension $m-1$, then
\begin{equation}\label{eq: multihomogeneous Bezout th}
[V\cap Z(f)]=[V]\cdot [Z(f)].
\end{equation}

Finally, the following result shows that mixed degrees are monotonic
with respect to linear projections. Let $\bfs{l}=(l_1\klk
l_s)\in\N^s$ be an $s$--tuple with $\bfs{l}\le\bfs n$ and let
$\pi:\PpoK^{\bfs n}\dashrightarrow\PpoK^{\bfs{l}}$ be the linear
projection which takes the first $l_i+1$ coordinates of each
coordinate $x_i$ of each point $\bfs{x}=(x_1\klk x_s)\in\PpoK^{\bfs
r}$, namely,
$$\pi(x_{i,j} \colon 1\le i\le s,0\le j\le n_i)=
(x_{i,j} \colon 1\le i\le s,0\le j\le l_i).$$
This rational map induces the following injective $\Z$--linear map:
$$\jmath:A^*(\PpoK^{\bfs{l}})\to A^*(\PpoK^{\bfs n}),\quad
\jmath(P)=\bfs{\theta}^{\bfs n-\bfs{l}}P.$$
If $V\subset\PpoK^{\bfs n}$ is a variety of pure dimension $m$ and
$\overline{\pi(V)}$ is also of pure dimension $m$, then
\begin{equation}\label{eq: multihomogeneous mixed degree proj}
\jmath([\overline{\pi(V)}])\le [V];
\end{equation}
see \cite{DaKrSo13}, Proposition 1.16. Equivalently, $\deg_{\bfs
b}(\pi_*V)\le\deg_{\bfs b}V$ for any $\bfs b\in\N^s_{\le m}$, where
$\pi_*V=\deg (\pi|_V)\overline{\pi(V)}$ and
$\deg(\pi|_V)=[\oK(V):\oK(\overline{\pi(V)})]$.
%
%
\subsection{Varieties over a finite field $\fq$}
In the following, $\PpF^n$ is the projective
$n$--dimensional space over an algebraic closure $\mathbb{F}$ of
$\fq$, endowed with its Zariski topology.
 $\Pp^n(\fq)$ is the $n$--dimensional projective space over
$\fq$, of cardinality
\begin{equation}
\label{sizeProjective}
p_n=\#\Pp^n(\fq)=q^n+q^{n-1}+\dots+1.
\end{equation}
We denote by $V(\fq)$ the set of $\fq$--rational points of a
projective variety $V\subset\PpF^n$, namely, $V(\fq)=V\cap
\Pp^n(\fq)$.
If $V$ is
 of dimension
$m$ and degree $\delta$, we have
 \begin{equation}\label{eq: upper bound -- projective gral}
   \#V(\fq)\leq \delta\, p_m;
 \end{equation}
%
%
see \cite{GhLa02a}, Proposition 12.1, or \cite{CaMa07}, Proposition
3.1. For $\bfs n=(n_1\klk n_s)\in\N_{\geq 1}^s$, 
$\PpF^{\bfs n}= \PpF^{n_1}\times\cdots\times \PpF^{n_s}$ is the
multiprojective space  over $\F$.
Let $f\in\F[\bfs X]$ be multihomogeneous of multidegree $\bfs
d=(d_1\klk d_s)$. The following provides a highly useful upper bound
on the number of $\fq$--rational zeros of $f$ in $\Pp^{\bfs
n}(\fq)$, which generalizes (\ref{eq: upper bound -- projective
gral}) to the multiprojective setting.

For $\bfs{\varepsilon}\in\N^s$ and $\bfs n\ge\bfs{\varepsilon}$,
we use the notations $\bfs
d^{\bfs{\varepsilon}}=d_1^{\,\varepsilon_1}\cdots d_s^{\,\varepsilon_s}$ and
$p_{\bfs n-\bfs{\varepsilon}}=p_{n_1-\varepsilon_1}\cdots p_{n_s-\varepsilon_s}$.
\begin{fact}[{\cite{CaMaPr15}, Proposition 3.1}]
\label{prop: upper bound multihomogeneous} Let $f\in\F[\bfs X]$ be a
multihomogeneous polynomial of multidegree $\bfs d$ with
 $d_i\le q$ for all $i$, and let $N$ be the number of zeros of $f$ in
$\Pp^{\bfs n}(\fq)$. Then
$$N\le
\sum_{\bfs\varepsilon\in\{0,1\}^s\setminus\{\bfs{0}\}}
(-1)^{|\bfs\varepsilon|+1}\bfs d^{\, \bfs\varepsilon}p_{\bfs
n-\bfs\varepsilon}.
$$
\end{fact}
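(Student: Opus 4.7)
The plan is to proceed by induction on the number $s$ of projective factors. The base case $s=1$ is the classical bound $N \le d_1\, p_{n_1-1}$ for the number of $\fq$--rational zeros of a nonzero homogeneous polynomial of degree $d_1$ in $\PpF^{n_1}$, which is (\ref{eq: upper bound -- projective gral}) applied to the degree--$d_1$ hypersurface $Z(f)$ and relies on the hypothesis $d_1 \le q$; in the notation of the fact this is exactly the $\bfs\varepsilon=(1)$ term.

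For the inductive step I would slice along the last factor. Fix $\bfs x' = (x_1,\ldots,x_{s-1}) \in \Pp^{n_1}(\fq)\times\cdots\times\Pp^{n_{s-1}}(\fq)$ and specialise to obtain $f(\bfs x', X_s) \in \fq[X_s]$, homogeneous of degree $d_s$. Let $A$ denote the set of $\bfs x'$ for which this specialisation vanishes identically and $B$ its complement. For $\bfs x' \in B$ the fibre contributes at most $d_s\, p_{n_s-1}$ zeros by the base case, while for $\bfs x' \in A$ the whole $\Pp^{n_s}(\fq)$ of cardinality $p_{n_s}$ contributes, so $N \le |A|\, p_{n_s} + |B|\, d_s\, p_{n_s-1}$. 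Expanding $f = \sum_\alpha c_\alpha(X_1,\ldots,X_{s-1})\, M_\alpha(X_s)$ with $M_\alpha$ ranging over the monomials of degree $d_s$ in $X_s$, each coefficient $c_\alpha$ is multihomogeneous of multidegree $(d_1,\ldots,d_{s-1})$; since $f \ne 0$ at least one $c_{\alpha_0}$ is nonzero, $A$ lies in its zero locus, and the inductive hypothesis bounds $|A|$.

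Using $|A|+|B|=p_{n_1}\cdots p_{n_{s-1}}$ rewrites the estimate as $N \le |A|\,(p_{n_s}-d_s\, p_{n_s-1}) + (p_{n_1}\cdots p_{n_{s-1}})\, d_s\, p_{n_s-1}$. The coefficient $p_{n_s}-d_s\, p_{n_s-1} = 1 + (q-d_s)\, p_{n_s-1}$ is nonnegative precisely because $d_s \le q$, so the inductive upper bound on $|A|$ may be inserted without reversing the inequality. The main obstacle, once the induction is in place, is the bookkeeping: after distributing, the multi-indices $\bfs\varepsilon \in \{0,1\}^s \setminus \{\bfs 0\}$ split into those with $\varepsilon_s=0$, which come from the $p_{n_s}$ part of the inductive sum; those with $\varepsilon_s=1$ and $\bfs\varepsilon' \ne \bfs 0$, which come from the $-d_s\, p_{n_s-1}$ part with a sign flip matching $(-1)^{|\bfs\varepsilon'|+2}=(-1)^{|\bfs\varepsilon|+1}$; and the singleton $\bfs\varepsilon=(0,\ldots,0,1)$, which arises from the explicit term $d_s\, p_{n_s-1}\prod_{i<s} p_{n_i}$. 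Collecting signs then recovers the stated inclusion--exclusion formula.
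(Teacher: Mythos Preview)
The paper does not prove this statement; it is quoted as a Fact with a citation to \cite{CaMaPr15}, Proposition~3.1, so there is no proof in the paper to compare against.

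Your argument is correct. The induction on $s$, the splitting of the base $\Pp^{n_1}(\fq)\times\cdots\times\Pp^{n_{s-1}}(\fq)$ into the identically-vanishing locus $A$ and its complement, and the observation that $p_{n_s}-d_s\,p_{n_s-1}=1+(q-d_s)\,p_{n_s-1}\ge 0$ under the hypothesis $d_s\le q$ are exactly the ingredients needed, and your bookkeeping of the three families of $\bfs\varepsilon$ is accurate. This is in fact the standard proof, and is essentially the argument given in the cited reference.

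One small remark: your base case does not actually require $d_1\le q$. The bound \eqref{eq: upper bound -- projective gral} gives $\#Z(f)(\fq)\le \deg Z(f)\cdot p_{n_1-1}\le d_1\,p_{n_1-1}$ for any nonzero homogeneous $f$ of degree $d_1$, with no constraint relating $d_1$ and $q$. The hypothesis $d_i\le q$ enters only in the inductive step, precisely where you use it to keep the coefficient of $|A|$ nonnegative.
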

%
%
\section{Set--theoretic and ideal-theoretic\\ complete intersections}
\label{section:complete}
It is convenient to fix the following notation:
\begin{equation}
\begin{aligned}
\label{notationD}
& \text{integers } n \text{ and } s \text{ with } 0<s<n, \\
&\bfs d=(d_1\klk d_s)\in\N^s \text{ with } d_1\ge d_2\ge \cdots \ge d_s\ge 1 \text{ and } d_1 \geq 2,\\
& \delta=d_1\cdots d_s,\\
& \sigma=(d_1-1)\plp (d_s-1),\\
& D_i=\binom{d_i+n}{n}-1 \text{ for } 1\le i\le s,\\
&  \bfs D=(D_1\klk D_s) \in \N^s,\\
& |\bfs D|=D_1\plp D_s.
\end{aligned}
\end{equation}

Let $\K$ be a field. Each $s$--tuple of homogeneous polynomials
$\bfs f=(f_1\klk f_s)$ with $f_i\in\K[X]={\K}[X_0\klk X_n]$ and
$\deg f_i=d_i$ is represented by a point in the multiprojective
space $\PpoK^{\bfs D}=\PpoK^{D_1}\times\cdots\times\PpoK^{D_s}$.
More precisely, let $\bfs \lambda=(\lambda_1\klk\lambda_s)$ be a
point of $\PpoK^{\bfs D}$. We label the $D_i+1$ coordinates of each
$\lambda_i$ by the $D_i+1$ multi-indices
$\bfs\alpha\in\N_{d_i}^{n+1}$, namely,
$\lambda_i=(\lambda_{i,\bfs\alpha}\colon |\bfs\alpha|=d_i)$. Then we
associate each point $\bfs \lambda=(\lambda_1\klk\lambda_s)$ with
the $s$--tuple of polynomials $\bfs f=(f_1\klk f_s)$ defined as
$f_i=\sum_{|\bfs\alpha|=d_i}\lambda_{\bfs\alpha,i}X^{\bfs\alpha}$
for $1\le i\le s$. In the following, the symbol $\bfs f=(f_1\klk
f_s)$ shall denote either an $s$--tuple of homogeneous polynomials
of $\K[X_0\klk X_n]$ with degree pattern $(d_1\klk d_s)$ or the
corresponding point in $\PpoK^{\bfs D}$.

Let $\{F_{i,\bfs\alpha}:|\bfs\alpha|=d_i\}$ be a set of $D_i+1$
variables over $\oK$ for $1\le i\le s$. We shall consider the formal
polynomial $F_i= \sum_{|\bfs\alpha|=d_i} F_{i,\bfs\alpha}
X^{\bfs\alpha}$, which is homogeneous of degree $d_i$ in the
variables $X_0\klk X_n$. We  use the notations
$\mathrm{coeffs}(F_i)= \{F_{i,\bfs\alpha}\colon |\bfs\alpha|=d_i\}$ for
$1\le i\le s$ and $\mathrm{coeffs}(\bfs F)=\cup_{1\le i\le
s}\mathrm{coeffs}(F_i)$. The coordinate ring of $\PpoK^{\bfs D}$ is
represented by the polynomial ring $\oK[\mathrm{coeffs}(\bfs F)]$.
The obstruction polynomials $P$ to be defined are elements
of this ring, and given some polynomial sequence $\bfs f$ as above,
it is well-defined whether $P(\bfs f)=0$ or not.
%
%
\subsection{Set--theoretic complete intersections}
\label{subsec: set-theoretic complete intersections}
We first consider the set of $s$--tuples $\bfs f$ of homogeneous
polynomials of $\K[X_0\klk X_n]$ with degree pattern $(d_1\klk d_s)$
defining a set-theoretic complete intersection. For this purpose, we
introduce the following incidence variety:
\begin{equation}\label{eq: definition incidence var W}
W=\{(\bfs f,x)\in\PpoK^{\bfs D}\times\PpoK^n:\bfs f(x)=\bfs{0}\}.
\end{equation}
This incidence variety is well-known. For the sake of completeness,
we establish here its most important geometric properties.
\begin{lemma}\label{lemma: W is abs irred}
$W$ is absolutely irreducible of dimension $|\bfs D|+n-s$.
\end{lemma}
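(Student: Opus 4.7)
The plan is to analyze $W$ via the projection $\pi_2 : W \to \PpoK^n$ onto the second factor and apply the classical fiber-irreducibility theorem. Surjectivity of $\pi_2$ is immediate: given $x \in \PpoK^n$, choose any nonzero linear form $L_x \in \oK[X_0,\ldots,X_n]$ vanishing at $x$ (which exists since $n \ge 1$), and then $(L_x^{d_1},\ldots,L_x^{d_s})$ yields a point of $W$ mapping to $x$.

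Next I would compute the fibers explicitly. For fixed $x \in \PpoK^n$, the condition $f_i(x)=0$, read in the coordinates $\{F_{i,\bfs\alpha}:|\bfs\alpha|=d_i\}$ of $\PpoK^{D_i}$, becomes the single linear equation
\[
\sum_{|\bfs\alpha|=d_i} F_{i,\bfs\alpha}\, x^{\bfs\alpha} = 0.
\]
Choosing an index $j$ with $x_j \neq 0$, the coefficient of $F_{i,d_i\bfs e_j}$ in this form equals $x_j^{d_i} \neq 0$, so the equation is nonzero and cuts out a hyperplane $H_i(x) \subset \PpoK^{D_i}$. Since the $s$ equations involve disjoint sets of coefficient variables, the fiber is
$$\pi_2^{-1}(x) \cong H_1(x) \times \cdots \times H_s(x),$$
which is absolutely irreducible of dimension $\sum_{i=1}^s (D_i-1) = |\bfs D| - s$, independent of $x$.

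Since $\PpoK^n$ is absolutely irreducible of dimension $n$ and $\pi_2$ is surjective with all fibers absolutely irreducible of the same dimension $|\bfs D|-s$, the classical fiber-irreducibility theorem (see, e.g., \cite{Shafarevich94}, Ch.~I, \S6, Theorem~8) yields that $W$ is absolutely irreducible of dimension $n + (|\bfs D|-s) = |\bfs D|+n-s$, which is the desired conclusion.

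The main subtlety to watch is the verification of the hypotheses of the fiber-irreducibility theorem in the product setting $\PpoK^{\bfs D}\times\PpoK^n$; this could be sidestepped by observing that the computation above already exhibits $\pi_2$ as a Zariski-locally trivial bundle over $\PpoK^n$ with fibers $\prod_i\PpoK^{D_i-1}$, trivialized on each standard affine chart of $\PpoK^n$ (the one where $X_j\neq 0$), from which both irreducibility and dimension follow at once.
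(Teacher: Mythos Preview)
Your proof is correct and follows essentially the same approach as the paper: both analyze the projection $W\to\PpoK^n$, observe that every fiber is a product of hyperplanes and hence irreducible of dimension $|\bfs D|-s$, and invoke \cite{Shafarevich94}, \S I.6.3, Theorem~8. The only minor difference is that you read off the dimension from the fibration, whereas the paper obtains it separately by noting that the $s$ defining equations form a regular sequence in $\K[\mathrm{coeffs}(\bfs F),X]$.
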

\begin{proof}
Let $\phi:W\to\PpoK^n$ be the restriction of the projection
$\PpoK^{\bfs D}\times\PpoK^n\to\PpoK^n$ to the second argument. Then
$\phi$ is a closed mapping, because it is the restriction to $W$ of
the projection $\PpoK^{\bfs D}\times\PpoK^n\to\PpoK^n$, which is a
closed mapping.

As $W$ is a closed set of a multiprojective space, it is a
projective variety. Furthermore, each fiber $\phi^{-1}(x)$ is a
linear (irreducible) variety of dimension $|\bfs D|-s>0$, and $\phi:
W\to \PpoK^n$ is surjective. Then \cite{Shafarevich94}, \S I.6.3, Theorem 8,
shows that $W$ is irreducible.

Finally, since $W$ is defined by $s$ polynomials which form a
regular sequence of $\K[\mathrm{coeffs}(\bfs F), X]$, we see that
$\dim W=|\bfs D|+n-s$.
\end{proof}

By the theorem on the dimension of fibers, a generic $\bfs f$ as
above defines a projective variety $Z(\bfs f)\subset\PpoK^n$ of
dimension $n-s$, which is necessarily a set-theoretic complete
intersection. Our next result provides quantitative information
concerning such $\bfs f$.
\begin{theorem}\label{th: V is set-theoret complete int}
In the notation (\ref{notationD}), there exists a nonzero
multihomogeneous polynomial
$P_{\mathrm{stci}}\in\K[\mathrm{coeffs}(\bfs F)]$, of degree at most
$\delta/d_i$ in each set of variables $\mathrm{coeffs}(F_i)$ for
$1\le i\le s$, with the following property: for any $\bfs
f\in\PpoK^{\bfs D}$ with $P_{\mathrm{stci}}(\bfs f)\not=0$, the
variety $Z(\bfs f)$ has dimension $n-s$. In particular, $Z(\bfs f)$
is a set-theoretic complete intersection and $\bfs f$
is a regular sequence.
\end{theorem}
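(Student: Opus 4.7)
My plan is to construct $P_{\mathrm{stci}}$ as the equation of the image in $\PpoK^{\bfs D}$ of the incidence variety $W$ of Lemma~\ref{lemma: W is abs irred} after it has been cut by a sufficiently generic linear subspace in the second factor. Let $\pi_1:W\to\PpoK^{\bfs D}$ be the first projection. Since $s\le n$, the projective intersection theorem guarantees $Z(\bfs f)\ne\emptyset$ for every $\bfs f\in\PpoK^{\bfs D}$, so $\pi_1$ is surjective, and by the properness of $\pi_1$ together with the upper semicontinuity of fiber dimension, the set
$$B:=\{\bfs f\in\PpoK^{\bfs D}:\dim Z(\bfs f)>n-s\}$$
is closed and proper in $\PpoK^{\bfs D}$. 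Since Krull's height theorem gives $\dim Z(\bfs f)\ge n-s$ unconditionally, it suffices to exhibit a nonzero polynomial of the stated multidegree that vanishes on $B$ (the regular-sequence conclusion then follows from the characterization recalled just before Fact~\ref{theorem: normal complete int implies irred}).

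Fix a linear subspace $L\subset\PpoK^n$ of dimension $s-1$ in sufficiently general position, and set $W_L:=W\cap(\PpoK^{\bfs D}\times L)$ and $Y_L:=\overline{\pi_1(W_L)}$. For every $\bfs f\in B$ we have $\dim Z(\bfs f)+\dim L\ge n$, so the projective intersection theorem in $\PpoK^n$ forces $Z(\bfs f)\cap L\ne\emptyset$; hence $B\subseteq Y_L$. To bound the multidegree of $Y_L$, I would work in the Chow ring of $\PpoK^{\bfs D}\times\PpoK^n$. Iterated application of the multihomogeneous B\'ezout theorem~(\ref{eq: multihomogeneous Bezout th}) to the defining equations of $W$ (each $F_i$ is linear in $\mathrm{coeffs}(F_i)$ and of degree $d_i$ in $X$, hence of class $\theta_i+d_i\eta$) yields $[W]=\prod_{i=1}^s(\theta_i+d_i\eta)$, while $[\PpoK^{\bfs D}\times L]=\eta^{n-s+1}$. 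Since $\pi_{1*}(\eta^j\gamma)=\gamma$ for $j=n$ and $0$ otherwise (for $\gamma$ pulled back from $\PpoK^{\bfs D}$), in the expansion of $\eta^{n-s+1}\prod_i(\theta_i+d_i\eta)$ only the terms with exactly one factor $\theta_i$ survive, giving
$$\pi_{1*}[W_L]=\sum_{i=1}^{s}\frac{\delta}{d_i}\,\theta_i\in A^{1}(\PpoK^{\bfs D}).$$
By~(\ref{eq: definition Chow cycle of a hypersurface}) this is the class of a hypersurface cut out by a nonzero multihomogeneous polynomial $P_{\mathrm{stci}}$ of degree at most $\delta/d_i$ in each set $\mathrm{coeffs}(F_i)$, vanishing on $Y_L\supseteq B$, as required.

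The main obstacle is verifying the genericity of $L$: one needs $L$ for which $W_L$ has the expected pure dimension $|\bfs D|-1$, so that the multihomogeneous B\'ezout theorem applies, and for which $Y_L\subsetneq\PpoK^{\bfs D}$, so that $P_{\mathrm{stci}}\neq 0$. The second condition amounts to exhibiting a single regular sequence $\bfs g$ of degrees $d_1\klk d_s$ with $Z(\bfs g)\cap L=\emptyset$, which follows from a standard dimension count on the Grassmannian once $L$ is chosen outside a proper closed subvariety. Both conditions define Zariski-dense open subsets of the Grassmannian of $(s-1)$-planes of $\PpoK^n$, so a suitable $L$ exists over $\oK$; descent to $\K$ can be carried out by working instead with the Macaulay resultant $\mathrm{Res}(F_1\klk F_s,L_1\klk L_{n-s+1})$ in which the linear forms $L_k$ have indeterminate coefficients $\Lambda$, which is defined over $\K$, homogeneous of degree $\delta/d_i$ in $\mathrm{coeffs}(F_i)$, and vanishes identically as a polynomial in $\Lambda$ precisely on $B$, so that any of its nonzero $\Lambda$-coefficients serves as $P_{\mathrm{stci}}$.
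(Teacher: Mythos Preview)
Your argument is correct, and in its final form (the Macaulay resultant with indeterminate linear coefficients $\Lambda$, then extracting a nonzero $\Lambda$-coefficient) it is essentially the paper's approach. The paper is simply more direct: it fixes the specific linear space $L=Z(X_s,\ldots,X_n)$ from the outset, takes $P_{\mathrm{stci}}$ to be the multivariate resultant of $F_1,\ldots,F_s,X_s,\ldots,X_n$, and cites the standard degree formula (\cite{CoLiOS98}, Chapter~3, Theorem~3.1) to read off $\deg_{\mathrm{coeffs}(F_i)}P_{\mathrm{stci}}=\delta/d_i$. No Chow-ring computation, no genericity argument, and no descent issue arises, because this $L$ is defined over $\K$ and the resultant of formal polynomials is automatically nonzero.

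Two remarks on your detour. First, your ``main obstacle'' (a) is not actually an obstacle: for \emph{every} $(s-1)$-plane $L$, the second projection $W_L\to L$ has all fibers linear of dimension $|\bfs D|-s$, so $W_L$ is always irreducible of pure dimension $|\bfs D|-1$ by the same argument as Lemma~\ref{lemma: W is abs irred}. Second, condition (b) already holds for the specific $L=Z(X_s,\ldots,X_n)$: the tuple $\bfs g=(X_0^{d_1},\ldots,X_{s-1}^{d_s})$ has $Z(\bfs g)\cap L=\emptyset$. So the Chow-ring route can be carried out over $\K$ with this concrete $L$, recovering exactly the paper's polynomial; but once one sees this, the resultant formula gives the degree directly and the intersection-theoretic computation becomes redundant.
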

\begin{proof}
Let $\bfs f=(f_1\klk f_s)$ be an arbitrary point of $\PpoK^{\bfs
D}$. Suppose that the variety $Z(\bfs f)\subset\PpoK^n$ has
dimension $\dim Z(\bfs f)>n-s$. Then $Z(\bfs f,X_s\klk X_n)$ is not
empty. It follows that the set of $s$--tuples of polynomials $\bfs
f$ defining a variety of dimension strictly greater than $n-s$ is
contained in the set of $\bfs f$ such that $Z(\bfs f,X_s\klk X_n)$
is not empty.

The multivariate resultant of formal polynomials $F_1 \klk F_s$,
\linebreak $F_{s+1}\klk F_{n+1}$ of degrees $d_1\klk d_s,1\klk 1$ is
an irreducible multihomogeneous polynomial of
$\K[\mathrm{coeffs}(F_1)\klk \mathrm{coeffs}(F_{n+1})]$ of degree
$d_1\cdots d_{i-1}d_{i+1}\cdots d_{n+1}$ in the coefficients of
$F_i$; see \cite{CoLiOS98}, Chapter 3, Theorem 3.1. In
particular, the multivariate resultant of $F_1\klk F_s,X_s\klk X_n$
is a nonzero multihomogeneous polynomial
$P_{\mathrm{stci}}\in\K[\mathrm{coeffs}(\bfs F)]$ of degree
$\delta/d_i$ in the coefficients of $F_i$ for $1\le i\le s$.

We claim that the 
multihomogeneous polynomial $P_{\mathrm{stci}}$ satisfies the
requirements of the theorem. Indeed, let $\bfs f\in \PpoK^{\bfs D}$
with $P_{\mathrm{stci}}(\bfs f)\not=0$. Then the multivariate
resultant of $\bfs f,X_s\klk X_n$ does not vanish. According to
 \cite{CoLiOS98}, Chapter 3, Theorem 2.3, the projective
variety $Z(\bfs f,X_s\klk X_n)\subset\PpoK^n$ is empty, which
implies that $Z(\bfs f)$ has dimension at most $n-s$. On the other
hand, each irreducible component of $Z(\bfs f)$ has dimension at
least $n-s$. We deduce that $Z(\bfs f)$ is of pure dimension $n-s$.
Furthermore, as $Z(\bfs f)$ is defined by $s$ homogeneous
polynomials, we conclude that it is a set-theoretic complete
intersection. This finishes the proof of the theorem.
\end{proof}
%
%
%
\subsection{Ideal--theoretic complete intersections}
\label{subsec: ideal-theoretic complete intersections}
Now we consider the set of $s$--tuples of homogeneous polynomials
$\bfs f$ as above defining a complete intersection.

For this purpose, we introduce another incidence variety:
\begin{equation}\label{eq: definition incidence var W'}
W_{\mathrm{ci}}=\{(\bfs f,x)\in\PpoK^{\bfs D}\times\PpoK^n:\bfs
f(x)=\bfs{0},J(\bfs f)(x)=0\},
\end{equation}
where $J(\bfs f)=\det(\partial f_i/\partial X_j:1\le i,j\le s)$ is
the Jacobian determinant of $\bfs f$ with respect to $X_1\klk X_s$.
\begin{lemma}\label{lemma: W' is equidim}
$W_{\mathrm{ci}}$ is of pure dimension $|\bfs D|+n-s-1$.
\end{lemma}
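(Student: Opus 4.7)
The plan is to realize $W_{\mathrm{ci}}$ as a hypersurface section of the incidence variety $W$ from Lemma \ref{lemma: W is abs irred} and then apply Krull's principal ideal theorem. By construction, $W_{\mathrm{ci}} = W \cap Z(J)$, where $J = J(\bfs F) = \det(\partial F_i/\partial X_j : 1 \le i,j \le s)$ is a multihomogeneous polynomial in $\oK[\mathrm{coeffs}(\bfs F), X_0,\ldots,X_n]$. By Lemma \ref{lemma: W is abs irred}, $W$ is absolutely irreducible of dimension $|\bfs D|+n-s$, so it suffices to show that $W_{\mathrm{ci}}$ is a proper and nonempty closed subvariety of $W$.

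For nonemptiness, I would take $f_i = X_i^{d_i}$ for $1\le i\le s$ together with $x=(1:0:\cdots:0)$. Then $\bfs f(x)=\bfs 0$, and since $d_1\ge 2$ the Jacobian $J(\bfs f)=\prod_i d_i\,X_i^{d_i-1}$ carries the factor $X_1^{d_1-1}$, which vanishes at $x$. Hence $(\bfs f,x)\in W_{\mathrm{ci}}$. For the strict inclusion $W_{\mathrm{ci}}\subsetneq W$, I would instead take $f_i = X_0^{d_i-1}X_i$ and the same point $x$. Then again $\bfs f(x)=\bfs 0$, while the Jacobian matrix $(\partial f_i/\partial X_j)_{1\le i,j\le s}$ is diagonal with $i$th entry $X_0^{d_i-1}$, so $J(\bfs f) = X_0^{\sigma}$ evaluates to $1$ at $x$; thus $(\bfs f,x)\in W\setminus W_{\mathrm{ci}}$.

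With both observations in hand, Krull's Hauptidealsatz, applied to the irreducible variety $W$ and the element $J$ (which is neither zero nor a unit in the homogeneous coordinate ring of $W$), forces every irreducible component of $W_{\mathrm{ci}}=W\cap Z(J)$ to have codimension exactly one in $W$. Consequently every such component has dimension $|\bfs D|+n-s-1$, and $W_{\mathrm{ci}}$ is of pure dimension $|\bfs D|+n-s-1$. The only step that requires some thought is the explicit choice of the two witnessing pairs $(\bfs f,x)$; beyond that, the proof is a direct application of the irreducibility of $W$ together with Krull's principal ideal theorem.
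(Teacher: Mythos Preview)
Your proof is correct and follows the same approach as the paper: write $W_{\mathrm{ci}}=W\cap Z(J)$, exhibit a point of $W$ where $J$ does not vanish, and use the irreducibility of $W$ (Lemma~\ref{lemma: W is abs irred}) together with Krull's principal ideal theorem to conclude pure codimension one. The only difference is cosmetic: the paper's witness for $J\not\equiv 0$ on $W$ is the tuple $\bfs f_0$ of homogenized squarefree univariate polynomials from \eqref{eq: part system ideal-th compl int}, chosen because it is reused in the proof of Lemma~\ref{lemma: pi:W' to PD is dominant}; your monomial witnesses $f_i=X_0^{d_i-1}X_i$ (and $f_i=X_i^{d_i}$ for nonemptiness) are simpler and work equally well for the present lemma.
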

\begin{proof}
We have $W_{\mathrm{ci}}=W\cap \{J(\bfs f)(x)=0\}$, where $W$ is the
incidence variety of (\ref{eq: definition incidence var W}). We
claim that $J(\bfs f)(x)$ does not vanish identically on $W$.
Indeed, fix a squarefree polynomial $f_i\in\oK[T]$ of degree $d_i$
for $1\le i\le s$ and let $f_i^h\in\oK[X_0,X_i]$ be the
homogenization of $f_i(X_i)$ with homogenizing variable $X_0$.
Denote
\begin{equation}\label{eq: part system ideal-th compl int}
\bfs f_0=(f_1^h(X_0,X_1)\klk f_s^h(X_0,X_s)).
\end{equation}
Then $\{\bfs f_0\}\times Z(\bfs f_0)$ is contained in $W$ and
$J(\bfs f_0)$ does not vanish identically on $Z(\bfs f_0)$, which
shows the claim.

According to Lemma \ref{lemma: W is abs irred}, $W$ is absolutely
irreducible of dimension $|\bfs D|+n-s$. Therefore, by the claim we
see that $W_{\mathrm{ci}}=W\cap \{J(\bfs f)(x)=0\}$ is of pure
dimension  $|\bfs D|+n-s-1$.
\end{proof}

With a slight abuse of notation, denote by
$\pi:W_{\mathrm{ci}}\to\PpoK^{\bfs D}$ the projection to the first
argument. We have the following result.
\begin{lemma}\label{lemma: pi:W' to PD is dominant}
$\pi\colon W_{\mathrm{ci}}\to\PpoK^{\bfs D}$ is a dominant mapping.
\end{lemma}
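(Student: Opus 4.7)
The plan is to produce a nonempty Zariski-open subset of $\PpoK^{\bfs D}$ contained in the image of $\pi$; since $\PpoK^{\bfs D}$ is irreducible, this will force $\pi$ to be dominant.

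I would work with two open sets. Let $U_1\subset\PpoK^{\bfs D}$ consist of those $\bfs f$ with $\dim Z(\bfs f)=n-s$; by Theorem \ref{th: V is set-theoret complete int}, $U_1$ is Zariski-open and nonempty. Let $U_2\subset\PpoK^{\bfs D}$ consist of those $\bfs f$ for which the Jacobian $J(\bfs f)$ does not vanish identically as a polynomial in $X_0\klk X_n$. To see that $U_2$ is nonempty (hence Zariski-open), I would reuse the system $\bfs f_0$ from \eqref{eq: part system ideal-th compl int}: since each $f_i^h$ depends only on $X_0$ and $X_i$, the $s\times s$ Jacobian with respect to $X_1\klk X_s$ is diagonal, giving $J(\bfs f_0)=\prod_{i=1}^s \partial f_i^h/\partial X_i$, which is nonzero because each $f_i$ is squarefree of positive degree.

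Fix $\bfs f\in U_1\cap U_2$. Then $Z(\bfs f)\subset\PpoK^n$ has pure dimension $n-s\ge 1$ (recall $s<n$), and $Z(J(\bfs f))\subset\PpoK^n$ is a hypersurface (of degree $\sigma\ge 1$, using $d_1\ge 2$). By the projective dimension theorem, every subvariety of $\PpoK^n$ of positive dimension meets every hypersurface of $\PpoK^n$, so there exists $x\in Z(\bfs f)\cap Z(J(\bfs f))$. Then $(\bfs f,x)\in W_{\mathrm{ci}}$ and $\pi(\bfs f,x)=\bfs f$, so $U_1\cap U_2\subset\pi(W_{\mathrm{ci}})$, proving dominance.

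The only step requiring care is the nonvanishing of the intersection $Z(\bfs f)\cap Z(J(\bfs f))$, which rests on $Z(\bfs f)$ being positive-dimensional; this is precisely where the standing assumption $s<n$ is essential. Nonvanishing of the universal Jacobian, openness of $U_1$, and the positivity of $\deg J(\bfs f)$ are all immediate from the setup.
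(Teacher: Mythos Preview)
Your argument is correct, but it takes a genuinely different route from the paper. The paper picks the explicit system $\bfs f_0$ of \eqref{eq: part system ideal-th compl int}, computes that the fiber $\pi^{-1}(\bfs f_0)$ has dimension exactly $n-s-1$, and then applies the theorem on the dimension of fibers together with Lemma~\ref{lemma: W' is equidim} (equidimensionality of $W_{\mathrm{ci}}$) to conclude that some irreducible component of $W_{\mathrm{ci}}$ has image of dimension $|\bfs D|$; since $\pi$ is a closed map this in fact gives surjectivity. You instead exhibit a nonempty open set inside the image directly, via the projective dimension theorem (a positive-dimensional projective subvariety meets every hypersurface). Your approach is more elementary and bypasses both Lemma~\ref{lemma: W' is equidim} and the fiber-dimension theorem; the paper's approach is marginally stronger (surjectivity rather than dominance) and, en route, pins down the generic fiber dimension $n-s-1$, which is precisely the information exploited immediately after the lemma. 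One small imprecision: Theorem~\ref{th: V is set-theoret complete int} does not literally assert that $U_1$ is open, only that it contains the nonempty open set $\{P_{\mathrm{stci}}\neq 0\}$; either invoke upper semicontinuity of fiber dimension for the incidence variety $W$, or simply replace $U_1$ by this smaller open set---the argument goes through unchanged.
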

\begin{proof}
Let $\bfs f_0$ be the $s$--tuple of polynomials of (\ref{eq: part
system ideal-th compl int}). Then the fiber
$\pi^{-1}(\bfs f_0)$ has dimension $n-s-1$. Let $\mathcal{C}$ be an
irreducible component of $W_{\mathrm{ci}}$ such that $\bfs f_0\in
\pi(\mathcal{C})$. It is clear that $\dim \pi(\mathcal{C})\le |\bfs
D|$, and $\dim\mathcal{C}=|\bfs D|+n-s-1$ by Lemma \ref{lemma: W' is
equidim}. The theorem on the dimension of fibers shows that
$$\dim\mathcal{C}-\dim\pi(\mathcal{C})
=|\bfs D|+n-s-1-\dim \pi(\mathcal{C})\le \dim\pi^{-1}(\bfs
f_0)=n-s-1.$$
Thus $\dim \pi(\mathcal{C})\ge|\bfs D|$ and hence $\dim
\pi(\mathcal{C})=|\bfs D|$. It follows that
$\pi(W_{\mathrm{ci}})=\PpoK^{\bfs D}$.
\end{proof}

A consequence of Lemma \ref{lemma: pi:W' to PD is dominant} is that
a generic fiber $\pi^{-1}(\bfs f)$ has dimension $n-s-1$. In
particular, for such an $\bfs f$ the variety $Z(\bfs f)$ is of pure
dimension $n-s$. Thus, $\bfs f$ is a regular sequence of $\oK[X]$
and the hypersurface defined by the Jacobian determinant $J(\bfs f)$
intersects $Z(\bfs f)$ in a subvariety of $Z(\bfs f)$ of dimension
$n-s-1$. This implies that $\bfs f$ defines a radical ideal and
$Z(\bfs f)$ is a complete intersection.

We now turn this into quantitative information on an obstruction
polynomial whose set of zeros contains all systems not defining
a complete intersection.
\begin{theorem}
\label{th: V is ideal-theoret complete int} In the notation
(\ref{notationD}), there exists a nonzero multihomogeneous
polynomial $P_{\mathrm{ci}}\in\K[\mathrm{coeffs}(\bfs F)]$ with
$$\deg_{\mathrm{coeffs}(F_i)}P_{\mathrm{ci}} \le \delta (\frac{\sigma}{d_i}+1 )\le
2\sigma\delta$$
for $1\le i\le s$
such that any $\bfs f=(f_1\klk f_s)\in\PpoK^{\bfs D}$ with
$P_{\mathrm{ci}}(\bfs f)\not=0$ satisfies the following properties:
\begin{itemize}
  \item $f_1\klk f_s$ form a regular sequence of $\oK[X_0\klk
  X_n]$,
  \item the ideal of $\oK[X_0\klk
  X_n]$ generated by $f_1\klk f_s$ is radical.,
  \item
  $Z(\bfs f)$ is an ideal-theoretic complete intersection of dimension $n-s$
  and degree $\delta$.
\end{itemize}
\end{theorem}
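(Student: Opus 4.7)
The plan is to follow the blueprint of Theorem \ref{th: V is set-theoret complete int} and define $P_{\mathrm{ci}}$ as a single specialization of the multivariate resultant, now including the Jacobian as an additional equation. Concretely, I would set
\[
P_{\mathrm{ci}}=\mathrm{Res}_{d_1,\ldots,d_s,\sigma,1,\ldots,1}\bigl(F_1,\ldots,F_s,\,J(\bfs F),\,X_{s+1},\ldots,X_n\bigr),
\]
the multivariate resultant of $n+1$ homogeneous polynomials in $X_0,\ldots,X_n$ of the indicated degrees, viewed as an element of $\K[\mathrm{coeffs}(\bfs F)]$. For the degree bound, I would invoke the standard fact (\cite{CoLiOS98}, Chapter 3, Theorem 3.1) that this resultant is homogeneous of degree $\prod_{k\neq i} e_k$ in the coefficients of the $i$-th polynomial. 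Here $\mathrm{coeffs}(F_i)$ enters directly in slot $i$, contributing $(\delta/d_i)\sigma$, and it also enters linearly into the coefficients of $J(\bfs F)$ (the $i$-th row of the Jacobian matrix is linear in $\mathrm{coeffs}(F_i)$ and the determinant is multilinear in the rows), contributing at most $\delta$ via slot $s+1$. The total is $\delta(\sigma/d_i+1)\le 2\sigma\delta$.

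To show $P_{\mathrm{ci}}\not\equiv 0$, I would reuse the sequence $\bfs f_0$ from the proof of Lemma \ref{lemma: W' is equidim}, with $f_{i,0}=f_i^h(X_0,X_i)$ for a squarefree $f_i\in\oK[T]$ of degree $d_i$ and nonzero leading coefficient. A putative common projective zero $x$ of $\bfs f_0$, $J(\bfs f_0)$ and $X_{s+1},\ldots,X_n$ would lie in the linear subspace $\Lambda=\{X_{s+1}=\cdots=X_n=0\}$. Since the Jacobian matrix of $\bfs f_0$ is diagonal, $J(\bfs f_0)=\prod_i \partial f_{i,0}/\partial X_i$, and on the chart $x_0\neq 0$ the simultaneous vanishing of $\bfs f_0(x)$ and $J(\bfs f_0)(x)$ would force some $f_i$ to have a multiple root, contradicting squarefreeness; the chart $x_0=0$ is ruled out because the leading coefficient of each $f_i$ is nonzero. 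By \cite{CoLiOS98}, Chapter 3, Theorem 2.3, this implies $P_{\mathrm{ci}}(\bfs f_0)\neq 0$.

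For sufficiency, assume $P_{\mathrm{ci}}(\bfs f)\neq 0$, so $Z(\bfs f)\cap\{J(\bfs f)=0\}\cap\Lambda=\emptyset$ with $\dim\Lambda=s$. If some component $\mathcal{C}\subseteq Z(\bfs f)$ had $\dim\mathcal{C}>n-s$, then $\mathcal{C}\cap\Lambda$ would have dimension $\ge 1$ and would meet the hypersurface $\{J(\bfs f)=0\}$, a contradiction; hence $Z(\bfs f)$ has pure dimension $n-s$, so $\bfs f$ is a regular sequence and $Z(\bfs f)$ is a set-theoretic complete intersection. Likewise, if $J(\bfs f)$ vanished identically on some component $\mathcal{C}$, the non-empty intersection $\mathcal{C}\cap\Lambda$ would lie in $\{J=0\}\cap\Lambda$, again a contradiction. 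Hence the open set $\{J(\bfs f)\neq 0\}\cap Z(\bfs f)$ is non-empty in every component, so by the Jacobian criterion every component contains a smooth point and therefore has a regular generic point, giving Serre's condition $R_0$. Combined with the Cohen-Macaulayness of $\oK[X_0,\ldots,X_n]/(f_1,\ldots,f_s)$ (regular sequence in a Cohen-Macaulay ring), Serre's criterion yields that $(f_1,\ldots,f_s)$ is radical; the degree equals $\delta$ by the B\'ezout theorem \eqref{eq: Bezout theorem}.

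The main technical point I anticipate is packaging both pure-dimensionality and generic nondegeneracy of $J$ into a single resultant while keeping the per-variable degree at $\delta(\sigma/d_i+1)$, which is strictly better than the bound one would obtain by multiplying $P_{\mathrm{stci}}$ by an independent obstruction for $J(\bfs f)\not\equiv 0$ on $Z(\bfs f)$.
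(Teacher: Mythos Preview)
Your proposal is correct and follows essentially the same approach as the paper: the polynomial $P_{\mathrm{ci}}$ is exactly the same resultant of $F_1,\ldots,F_s,J(\bfs F),X_{s+1},\ldots,X_n$, nonvanishing is witnessed by the same sequence $\bfs f_0$, and the degree count is identical. The only cosmetic difference is in the radicality step, where the paper invokes \cite{Eisenbud95}, Theorem 18.15, directly while you unwind it as $R_0+S_1$ via Serre's criterion; both routes are equivalent here.
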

\begin{proof}
Let $\bfs f=(f_1\klk f_s)$ be a point of $\PpoK^{\bfs D}$. If
$Z\big(\bfs f,J(\bfs f)\big)\subset\PpoK^n$ has dimension strictly
greater than $n-s-1$, then $Z(\bfs f,J(\bfs f),X_{s+1}\klk X_n)$ is
not empty. We conclude that the set of $\bfs f$ with $\dim
Z\big(\bfs f,J(\bfs f)\big)>n-s-1$ is contained in the set of $\bfs
f$ for which $Z(\bfs f,J(\bfs f), X_{s+1}\klk X_n)$ is not empty.

Let $\bfs f$ be a point of $\PpoK^{\bfs D}$ such that
$Z(\bfs f,J(\bfs f), X_{s+1}\klk X_n)$ is not empty. Then 
the resultant of $\bfs f,J(\bfs f),X_{s+1}\klk X_n$ must vanish. The
multivariate resultant of $F_1\klk F_s,J(\bfs F),X_{s+1}\klk X_n$ is
a nonzero polynomial $P_{\mathrm{ci}}\in\K[\mathrm{coeffs}(\bfs
F)]$. Indeed, let $\bfs f_0\in\PpoK^{\bfs D}$ be the point defined
in (\ref{eq: part system ideal-th compl int}). Then it is easy to
see that $Z(\bfs f_0,J(\bfs f_0), X_{s+1}\klk X_n)$ is empty, which
implies that $P_{\mathrm{ci}}(\bfs f_0)\not=0$.

We claim that the 
multihomogeneous polynomial
$P_{\mathrm{ci}}\in\K[\mathrm{coeffs}(\bfs F)]$ satisfies the
requirements of the theorem.

In order to show this claim, let $\bfs f\in \PpoK^{\bfs D}$ with
$P_{\mathrm{ci}}(\bfs f)\not=0$. Then the variety $Z(\bfs f,J(\bfs
f),X_{s+1}\klk X_n)$ is empty, which implies that $V'=Z(\bfs
f,J(\bfs f))$ has dimension at most $n-s-1$. On the other hand, each
irreducible component of $V'$ has dimension at least $n-s-1$ by
definition. We conclude that $V'$ is of pure dimension $n-s-1$.

Furthermore, as each irreducible component of $V=Z(\bfs f)$ has
dimension at least $n-s$, we deduce that $V'=V\cap Z(J(\bfs f))$ has
codimension at least one in $V$, and $V$ is of pure dimension $n-s$.
We conclude that $f_1\klk f_s$ form a regular sequence of $\oK[X]$
and the ideal generated by the $s\times s$ minors of the Jacobian
matrix of $f_1\klk f_s$ has codimension at least 1 in $V$. Then
\cite{Eisenbud95}, Theorem 18.15, proves that $f_1\klk f_s$ generate
a radical ideal of $\oK[X]$, so that $V$ is a complete intersection.

For an upper bound on the degree of
$P_{\mathrm{ci}}$, we use \cite{CoLiOS98}, Chapter 3, Theorem
3.1, saying that the multivariate resultant of formal polynomials $F_1\klk
F_s,F_{s+1}\klk F_{n+1}$ of degrees $d_1\klk d_s,\sigma,1\klk 1$ is
a multihomogeneous element of $\K[\mathrm{coeffs}(F_1)\klk
\mathrm{coeffs}(F_{n+1})]$ of degree $d_1\cdots d_{i-1}d_{i+1}\cdots
d_s\sigma=\sigma\delta/d_i$ in the coefficients of $F_i$ for $1\le
i\le s$ and degree $\delta$ in the coefficients of $F_{s+1}$. We
deduce that $P_{\mathrm{ci}}\in\K[\mathrm{coeffs}(\bfs F)]$ has
degree $\sigma\delta/d_i+\delta$ in the variables
$\mathrm{coeffs}(F_i)$ for $1\le i\le s$.

The third property follows from the B\'ezout theorem (\ref{eq:
Bezout theorem}).
\end{proof}

%
%
\subsubsection{Complete intersections defined over $\fq$}
From the theorem, we now derive a bound over a finite field $\fq$.
The number of all $s$--tuples of homogeneous polynomials of
$\fq[X_0\klk X_n]$ with degree sequence $\bfs d$ is
\begin{equation}
\label{allSequences}
\# \Pp^{\bfs D}(\fq) = p_{\bfs D} = \prod_{1 \le i \le s} p_{D_i}.
\end{equation}
We first present a general lower bound on the number of nonzeros of a multihomogeneous
polynomial with bounded degrees.
\begin{proposition}\label{propFromDegreeToProbability}
Let $P\in\K[\mathrm{coeffs}(\bfs F)]$ be a multihomogeneous
polynomial with $\deg_{\mathrm{coeffs}(F_i)} (P) \leq e_i \leq e
\leq q$ for $1 \leq i \leq s$, and let $N$ be the number of $\bfs
f\in\Pp^{\bfs D}(\fq)$ with $P(\bfs f) \neq 0$. Then
$$
 1- \frac{se} q  \le
  \prod_{1 \leq i \leq s}
\bigl(1-\frac{e_i}{q} \bigr) \le
\frac N  {p_{\bfs D}} \le1.
$$
The leftmost inequality assumes additionally that $q \ge es/3$.
\end{proposition}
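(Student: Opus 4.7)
The plan is to combine the multihomogeneous point-counting bound of Fact \ref{prop: upper bound multihomogeneous} with two elementary estimates: a telescoping identity for alternating sums of $p_{\bfs D - \bfs\varepsilon}$, and a Bernoulli-type numerical inequality. The trivial upper bound $N \le p_{\bfs D}$ simply records $\#\Pp^{\bfs D}(\fq) = p_{\bfs D}$, so all the content lies in the two lower bounds, which I would prove in order from right to left.

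For the middle inequality, I would pass to the complement and write $N = p_{\bfs D} - Z$, where $Z$ is the number of $\bfs f \in \Pp^{\bfs D}(\fq)$ with $P(\bfs f)=0$. Since $P$ is multihomogeneous with partial degree at most $e_i \le q$ in each block $\mathrm{coeffs}(F_i)$, Fact \ref{prop: upper bound multihomogeneous} applied with $\bfs e = (e_1,\ldots,e_s)$ in place of $\bfs d$ gives
$$Z \le \sum_{\bfs\varepsilon \in \{0,1\}^s \setminus \{\bfs 0\}} (-1)^{|\bfs\varepsilon|+1}\, \bfs e^{\bfs\varepsilon}\, p_{\bfs D - \bfs\varepsilon}.$$
Subtracting from $p_{\bfs D}$, the alternating sum telescopes into a product, so $N \ge \prod_{i=1}^{s}(p_{D_i} - e_i p_{D_i-1})$. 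The elementary estimate $p_{D_i-1}/p_{D_i} = (q^{D_i}-1)/(q^{D_i+1}-1) \le 1/q$, which is equivalent to $q \ge 1$, then yields $p_{D_i} - e_i p_{D_i-1} \ge p_{D_i}(1 - e_i/q) \ge 0$, and dividing by $p_{\bfs D}$ produces the middle inequality $N/p_{\bfs D} \ge \prod_{i=1}^{s}(1 - e_i/q)$.

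The leftmost inequality then reduces to a purely numerical fact about $[0,1]$-valued quantities: from $e_i \le e$ each factor satisfies $1 - e_i/q \ge 1 - e/q \ge 0$, whence $\prod(1 - e_i/q) \ge (1 - e/q)^s$, and Bernoulli's inequality $(1-x)^s \ge 1 - sx$ applied with $x = e/q \in [0,1]$ finishes the bound. The slightly subtle point to pin down carefully is the role of the extra hypothesis $q \ge es/3$: the plain Bernoulli step needs only $e \le q$, so this stronger assumption is presumably invoked to sharpen an intermediate step, for instance to control the quadratic cross-terms in the expansion of $\prod(1 - e_i/q) - (1 - \sum e_i/q)$ under the constraint $se/q \le 3$, or to absorb the small discrepancy between $p_{D_i-1}/p_{D_i}$ and $1/q$. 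I would settle this by expanding the product explicitly and identifying precisely which inequality in the chain actually calls for the ``$/3$'' factor.
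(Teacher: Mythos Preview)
Your approach is essentially identical to the paper's: bound the zero set via Fact~\ref{prop: upper bound multihomogeneous}, collapse the inclusion--exclusion sum to the product $\prod_i (p_{D_i} - e_i p_{D_i-1})$, use $p_{D_i-1}/p_{D_i} \le 1/q$, and then pass from $\prod (1 - e_i/q)$ to $(1-e/q)^s$ to $1 - se/q$.

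The one point where you diverge is the final numerical step, and here your instinct is correct and your confusion is well founded. The paper does \emph{not} invoke Bernoulli's inequality; instead it expands $(1 - e/q)^s$ binomially and argues that, after the leading $1 - se/q$, each positive term $\binom{s}{2k}(e/q)^{2k}$ dominates the following negative term $\binom{s}{2k+1}(e/q)^{2k+1}$. That pairing requires $(2k+1)/(s-2k) \ge e/q$, whose worst case $k=1$ gives roughly $q \ge es/3$, which is exactly where the extra hypothesis enters. Your Bernoulli argument $(1-x)^s \ge 1 - sx$ for $x \in [0,1]$ is cleaner and needs only $e \le q$, so you may drop the assumption $q \ge es/3$ altogether. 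There is no hidden use of it elsewhere in the chain: the ratio bound $p_{D_i-1}/p_{D_i} \le 1/q$ is exact enough, and the product-to-power step only needs $e_i \le e$.
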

\begin{proof}
The upper bound being obvious, we prove the lower bound.
 As $q\ge  e_i$ for all $i$, Fact \ref{prop: upper bound multihomogeneous} shows that
%
$$\#\{\bfs f\in\Pp^{\bfs D}(\fq):P(\bfs f)=0\}
\le \sum_{\bfs\varepsilon\in\{0,1\}^s
\setminus\{\bfs 0\}} (-1)^{|\bfs\varepsilon|+1}\bfs e^{\,
\bfs\varepsilon}p_{\bfs D-\bfs\varepsilon},$$
where $\bfs e=(e_1, \ldots, e_s)$.
Using the inequality
$$\frac{p_{D_i}-e_i \,
p_{D_i-1}}{p_{D_i}}\ge
1-\frac{e_i}{q} \ge 1 - \frac e q$$
for $1\le i\le s$, we
conclude that
\begin{align*}
 N & = \#\{\bfs f  \in\Pp^{\bfs D}(\fq):P (\bfs f)\not=0\} \ge
p_{\bfs D} - \!\! \sum_{\bfs \varepsilon\in\{0,1\}^s \setminus\{\bfs 0\}} \!\!
(-1)^{|\bfs\varepsilon|+1}\bfs e^{\,
\bfs\varepsilon}p_{\bfs D-\bfs\varepsilon}\\
&= \sum_{\bfs\varepsilon\in\{0,1\}^s}
(-1)^{|\bfs\varepsilon|}\bfs{e}^{\,
\bfs\varepsilon}p_{\bfs D-\bfs\varepsilon}
= \prod_{1\le i \le s}\bigl(p_{D_i}-
e_i \, p_{D_i-1}\bigr) \\
& \ge p_{\bfs D} \cdot \prod_{1 \leq i \leq s}
\bigl(1-\frac{e_i}{q} \bigr)
\ge p_{\bfs D} \bigl ( 1- \frac{e} q \bigr )^s \ge p_{\bfs D} \bigl ( 1- \frac{se} q \bigr ).
\end{align*}
The last inequality assumes $q \ge es/3$, so that in the binomial expansion
of the $s$th power, each positive even term (after the first two)
is at least as large as the following negative odd one.
%
\end{proof}
An important feature is the fact that the numerator in the lower bound
depends on the geometric system parameters $s$, $e_i$, and $e$,
but not on $q$.
This will be applied in several scenarios.
We then only state the concise leftmost lower bound. The reader can easily
substitute the more precise product lower bound if required,
also allowing a slightly relaxed lower bound on $q$.

Combining Theorem \ref{th: V is ideal-theoret complete int} and
Proposition \ref{propFromDegreeToProbability},
 we obtain the following result.

\begin{corollary}\label{coro: number ideal-theoret complete inters}
In the notation (\ref{notationD}), suppose that
$q \ge 2s\delta \sigma /3$.
Let $N_{\mathrm{ci}}$ be the number
of $\bfs f\in\Pp^{\bfs D}(\fq)$
defining a complete intersection $Z(\bfs f)\subset\PpF^n$ of
dimension $n-s$ and degree $\delta=d_1\cdots d_s$. Then
$$
 1 - \frac {2s \delta \sigma} q \le
\frac {N_{\mathrm{ci}}} {p_{\bfs D}} \le1.
$$
\end{corollary}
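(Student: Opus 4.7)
The plan is to combine the two ingredients that immediately precede the corollary: the geometric obstruction result (Theorem \ref{th: V is ideal-theoret complete int}) which produces a concrete nonzero multihomogeneous polynomial whose non-vanishing guarantees the desired complete intersection property, and the counting lemma (Proposition \ref{propFromDegreeToProbability}) which converts a degree bound on such an obstruction into a density of nonzeros over $\fq$.

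First I would invoke Theorem \ref{th: V is ideal-theoret complete int} to obtain the nonzero multihomogeneous polynomial $P_{\mathrm{ci}}\in\K[\mathrm{coeffs}(\bfs F)]$ satisfying $\deg_{\mathrm{coeffs}(F_i)}P_{\mathrm{ci}}\le \delta(\sigma/d_i+1)\le 2\sigma\delta$ for $1\le i\le s$, with the property that any $\bfs f\in\Pp^{\bfs D}(\fq)$ with $P_{\mathrm{ci}}(\bfs f)\neq 0$ defines an ideal-theoretic complete intersection $Z(\bfs f)\subset\PpF^n$ of dimension $n-s$; the degree-$\delta$ assertion then follows from the B\'ezout theorem \eqref{eq: Bezout theorem}. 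In particular, if $N$ denotes the number of $\bfs f\in\Pp^{\bfs D}(\fq)$ with $P_{\mathrm{ci}}(\bfs f)\neq 0$, we have $N\le N_{\mathrm{ci}}$.

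Next I would apply Proposition \ref{propFromDegreeToProbability} to $P_{\mathrm{ci}}$ with the uniform bound $e_i\le e:=2\sigma\delta$. The hypothesis $q\ge es/3=2s\sigma\delta/3$ is exactly what the corollary assumes, so the leftmost inequality of the proposition applies and yields
$$
\frac{N}{p_{\bfs D}}\ge 1-\frac{se}{q}=1-\frac{2s\sigma\delta}{q}.
$$
Combining with $N\le N_{\mathrm{ci}}\le p_{\bfs D}$ gives the claimed two-sided estimate, the upper bound being trivial since $N_{\mathrm{ci}}$ counts a subset of $\Pp^{\bfs D}(\fq)$.

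There is no real obstacle here: the only small thing to verify is that the hypothesis $q\ge 2s\sigma\delta/3$ is strong enough both to allow the application of Fact \ref{prop: upper bound multihomogeneous} inside the proof of Proposition \ref{propFromDegreeToProbability} (which only needs $q\ge e_i$ for each $i$, and here $e_i\le 2\sigma\delta\le q$) and to validate the binomial-comparison step giving the concise lower bound $1-se/q$. Both hold under the stated condition, so the chain of inequalities goes through cleanly.
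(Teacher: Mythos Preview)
Your proposal is correct and is exactly the argument the paper intends: the corollary is stated immediately after Proposition~\ref{propFromDegreeToProbability} with the one-line justification ``Combining Theorem~\ref{th: V is ideal-theoret complete int} and Proposition~\ref{propFromDegreeToProbability}, we obtain the following result,'' and you have simply spelled that combination out. The only tiny wrinkle is your parenthetical check that $e_i\le 2\sigma\delta\le q$: for $s\in\{1,2\}$ the hypothesis $q\ge 2s\sigma\delta/3$ does not literally force $2\sigma\delta\le q$, so to be fully precise you would apply Proposition~\ref{propFromDegreeToProbability} with the sharper $e_i=\delta(\sigma/d_i+1)$ and the product-form lower bound (which only needs $e_i\le q$), but this is a cosmetic point that the paper itself does not dwell on.
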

When $q$ is large compared to $\delta$, the
lower bound is close to $1$.
The corollary can also be interpreted as bounding the probability that
a uniformly random  $\bfs f\in\Pp^{\bfs D}(\fq)$
defines a complete intersection $Z(\bfs f)\subset\PpF^n$ of
dimension $n-s$ and degree $\delta=d_1\cdots d_s$.
%
%
\section{Absolutely irreducible  and \\ smooth complete intersections}
\label{section:irreducible}
Now we return to the general framework of the previous section, that
is, we fix an arbitrary field $\K$ and consider a sequence $\bfs
f=(f_1\klk f_s)$ of $s$ homogeneous polynomials  $f_1\klk
f_s\in\K[X]=\K[X_0\klk X_n]$ with a given degree pattern $(d_1\klk
d_s)$. In the previous section we have shown that for a generic
$\bfs f$, the projective variety $Z(\bfs f)\subset\PpoK^n$ is a
complete intersection of dimension $n-s$ and degree
$\delta=d_1\cdots d_s$.

In this section we show that $Z(\bfs f)$ is absolutely irreducible
and smooth for a generic $\bfs f$, and more precisely that the $\bfs
f$ without this property are contained in a hypersurface whose
degree we control.
%
%
\subsection{Smooth complete intersections}
\label{subsec: smooth complete intersections}
First we analyze smoothness. For this purpose, we introduce a
further incidence variety. Let $\mathcal{M}_{\bfs F}=(\partial
F_i/\partial X_j:1\le i\le s,0\le j\le n)$ denote the Jacobian
matrix of the formal homogeneous polynomials $F_1 \klk F_s$ of
degrees $d_1\klk d_s$. For $s+1\le k\le n+1$, consider the $s\times
s$--submatrix of $\mathcal{M}_{\bfs F}$ consisting of the columns
numbered $1\klk s-1$ and $k-1$, and let $J_k(\bfs F,X)$ be the
corresponding determinant, namely,
\begin{equation}\label{eq: definition jacobian determinants}
\begin{aligned}
J_k(\bfs F,X)=\det\big(\partial F_i/\partial X_j \colon & 1\le i\le
s,\\ & j\in\{1,\klk s-1,k-1\}\big).
\end{aligned}
\end{equation}
We consider the incidence variety
\begin{equation}\label{eq: definition W_s}
\begin{aligned}
W_{\mathrm {nons}}=\{ & (\bfs f,x)\in\PpoK^{\bfs D}\times\PpoK^n \colon \\
& \bfs f(x)=\bfs{0}, J_k(\bfs f)(x)=0 \text{ for } s+1\le k\le n+1\},
\end{aligned}
\end{equation}
and have the following result.
\begin{lemma}\label{lemma: J_k and f_k are regular sequence}
The polynomials $J_{s+1}(\bfs F,X)\klk J_{n+1}(\bfs F,X),F_1\klk
F_s$ form a regular sequence of $\K[\mathrm{coeffs}(\bfs F),X]$.
\end{lemma}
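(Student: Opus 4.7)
My plan is to prove that the given $n+1$ polynomials generate an ideal of height exactly $n+1$ in the Cohen--Macaulay polynomial ring $R=\K[\mathrm{coeffs}(\bfs F),X]$, which is equivalent to the sequence being regular. Krull's height theorem furnishes the upper bound $\mathrm{ht}\le n+1$, so the task reduces to showing that $V:=V(F_1,\ldots,F_s,J_{s+1},\ldots,J_{n+1})$ has pure codimension $n+1$, or equivalently dimension at most $|\bfs D|+s$.

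A natural first step is to peel off the regular subsequence $F_1,\ldots,F_s$. Applied on the affine cone, Lemma \ref{lemma: W is abs irred} gives that $\tilde W := V(F_1,\ldots,F_s)$ is an irreducible (hence Cohen--Macaulay) complete intersection of dimension $|\bfs D|+n+1$. The problem therefore reduces to showing that the residue classes of $J_{s+1},\ldots,J_{n+1}$ form a regular sequence in the coordinate ring of $\tilde W$, or equivalently that $\tilde W\cap V(J_{s+1},\ldots,J_{n+1})$ has pure codimension $n+1-s$ inside $\tilde W$.

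To exhibit a smooth point of $V$ of the expected codimension I would adapt the diagonal test system used in the proof of Lemma \ref{lemma: W' is equidim}. Take
\[\bfs f_0=(X_0^{d_1-1}X_1,\ldots,X_0^{d_s-1}X_s)\quad\text{and}\quad x_0=(1,0,\ldots,0)\in\PpoK^n.\]
Then $(\bfs f_0,x_0)\in\tilde W$ and $\mathcal{M}_{\bfs f_0}(x_0)$ is the $s\times s$ identity in columns $1,\ldots,s$ and zero in columns $0,s+1,\ldots,n$; hence $J_{s+1}(\bfs f_0)(x_0)=1$ while $J_k(\bfs f_0)(x_0)=0$ for $k\ge s+2$. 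Expanding $J_k$ along its $(k-1)$-th column and differentiating with respect to the coefficient of $X_0^{d_s-1}X_{k-1}$ in $F_s$ produces, up to sign, the cofactor determinant of the identity block in columns $1,\ldots,s-1$ with the last row removed, which equals $1$. Since these coefficients are pairwise disjoint across $k\in\{s+2,\ldots,n+1\}$, and are also disjoint from the coefficients of $X_0^{d_i}$ (which pick out the differentials $dF_i$ at $(\bfs f_0,x_0)$), the $n+1$ differentials $dF_1,\ldots,dF_s,dJ_{s+1},\ldots,dJ_{n+1}$ are linearly independent at $(\bfs f_0,x_0)$. The Jacobian criterion then produces an irreducible component of $V$ through $(\bfs f_0,x_0)$ of dimension exactly $|\bfs D|+s$.

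The main obstacle is to rule out other irreducible components of $V$ of dimension strictly exceeding $|\bfs D|+s$. The delicate locus is the one where the $s\times(s-1)$ submatrix of $\mathcal{M}_{\bfs F}(x)$ formed by columns $1,\ldots,s-1$ has rank at most $s-2$, for on this locus every $J_k$ vanishes identically and no codimension contribution is explicitly supplied by the $J_k$'s. I would control it by restricting the projection $\pi_2\colon V\to\PpoK^n$ to this rank-deficient subvariety, parametrising its fibre over a given $x\in\PpoK^n$ via the linear conditions $F_i(\cdot,x)=0$, and estimating the fibre dimension by combining the multihomogeneous B\'ezout theorem (\ref{eq: multihomogeneous Bezout th}) with the classical codimension-$2$ estimate for the determinantal variety of rank-deficient $s\times(s-1)$ matrices. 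Together with the smooth component produced above, this yields $\dim V=|\bfs D|+s$ and hence the regularity of the sequence.
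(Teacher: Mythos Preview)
Your proposal has two genuine gaps, and the route you take is far more complicated than necessary.

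First, the test point $(\bfs f_0,x_0)$ you produce does \emph{not} lie in $V$: you compute $J_{s+1}(\bfs f_0)(x_0)=1$, so the Jacobian criterion at that point says nothing about any component of $V$. To run a smooth--point argument you would need a point where \emph{all} of $F_1,\ldots,F_s,J_{s+1},\ldots,J_{n+1}$ vanish simultaneously.

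Second, and more fundamentally, even a correct smooth point would only show that \emph{one} component of $V$ has the expected codimension. You correctly flag ``the main obstacle'' --- excluding higher--dimensional components supported on the rank--deficient locus of the $s\times(s-1)$ submatrix --- and then leave it as a sketch (``I would control it by\ldots''). That sketch is precisely the content of the lemma; invoking a determinantal codimension estimate together with a fibre--dimension count does not obviously close the gap, because on the rank--deficient locus the $J_k$ impose no conditions at all, so you must show that the rank--deficient locus itself already has codimension $\ge n+1-s$ inside $\tilde W$, which is not addressed.

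The paper avoids all of this with a completely elementary triangularity argument in the coefficient variables. For $s+1\le k\le n+1$, let $F_{1,\bfs\alpha_k}$ be the coefficient of $X_0^{d_1-1}X_{k-1}$ in $F_1$. Then $J_k$ has degree $1$ in $F_{1,\bfs\alpha_k}$ (and is in fact irreducible), while $J_l$ for $l\neq k$ does not involve $F_{1,\bfs\alpha_k}$ at all, since its columns are $1,\ldots,s-1,l-1$ and none of these equals $0$ or $k-1$. Hence $V(J_{s+1},\ldots,J_{k-1})$ is a cylinder in the $F_{1,\bfs\alpha_k}$--direction and $J_k$ cannot vanish on any of its components. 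Similarly, the coefficient $F_{i,\bfs\gamma_i}$ of $X_0^{d_i}$ in $F_i$ occurs in $F_i$ but in no $J_k$ (no $\partial/\partial X_0$ column is used) and in no $F_j$ with $j\neq i$; so $F_i$ is not a zero divisor modulo the preceding polynomials. No dimension counts, determinantal loci, or smooth points are needed.
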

\begin{proof}
For $s+1\le k\le n+1$, let $\bfs{\alpha}_k=(d_1-1,0\klk 1,0\klk 0)$
be the exponent of the monomial $X_0^{d_1-1}X_{k-1}$. The choice of
$\bfs{\alpha}_k$ implies that the nonzero monomial
$F_{1,\bfs{\alpha}_k}X_0^{d_1-1}$ occurs with nonzero coefficient in
the dense representation of $\partial F_1/
\partial X_{k-1}$. Furthermore, the Jacobian determinant
$J_k(\bfs F,X)$ is a primitive polynomial of
$\K[\mathrm{coeffs}(\bfs F)\setminus\{F_{1,\bfs\alpha_k}\},X]
[F_{1,\bfs\alpha_k}]$ of degree 1 in $F_{1,\bfs\alpha_k}$. In
particular, $J_k(\bfs F,X)$ is an irreducible element of
$\oK[\mathrm{coeffs}(\bfs F),X]$. On the other hand, if $l\not= k$,
then $J_l(\bfs F,X)$ has degree zero in $F_{1,\bfs{\alpha}_k}$,
since none of the entries of the matrix defining $J_l(\bfs F,X)$
includes a derivative with respect to $X_0$ or $X_{k-1}$.

Since the multiprojective variety defined by $J_1(\bfs F,X)\klk
J_{k-1}(\bfs F,X)$ is a ``cylinder'' in the direction corresponding
to $F_{1,\bfs\alpha_k}$ and $J_k(\bfs F,X)$ is an irreducible
nonconstant element of $\K[\mathrm{coeffs}(\bfs
F)\setminus\{F_{1,\bfs\alpha_k}\},X] [F_{1,\bfs\alpha_k}]$, we
conclude that $J_k(\bfs F,X)$ is not a zero divisor modulo $J_1(\bfs
F,X)\klk$ $J_{k-1}(\bfs F,X)$ for $s+1\le k\le n+1$.

Now, denote $\bfs{\gamma}_i=(d_i,0\klk 0)$ for $1\le i\le s$.
Observe that no $J_k(\bfs F,X)$ depends on any of the indeterminates
$F_{i,\bfs{\gamma}_i}$ for $1\le i\le s$, since the partial
derivatives of $F_1\klk F_s$ with respect to $X_0$ are not included
in any of the $s\times s$--submatrices of the Jacobian matrix
$\mathcal{M}_{\bfs F}$ defining the polynomials $J_k(\bfs F,X)$. We
conclude that each $F_i$ is not a zero divisor modulo $J_{s+1}(\bfs
F,X)\klk J_{n+1}(\bfs F,X),F_1\klk F_{i-1}$. This finishes the proof
of the lemma.
\end{proof}

Now we show that for a generic $s$--tuple $\bfs f$ as above, the
corresponding system defines a smooth complete intersection. We
provide estimates on the degree of a hypersurface of $\PpoK^{\bfs
D}$ containing the elements $\bfs f$ for which $Z(\bfs f)$ is not
smooth.
\begin{theorem}\label{th: V is smooth}
In the notation (\ref{notationD}), there exists a nonzero 
multihomogeneous polynomial $P_{\mathrm{nons}}\in\K[\bfs F]$ with
$$\deg_{\mathrm{coeffs}(F_i)}P_{\mathrm{nons}}\le
\sigma^{n-s}\delta\bigl(\!\frac{\sigma}{d_i}+n-s+1\!\bigr)\le
(\sigma +n)\sigma^{n-s}\delta$$
for $1\le i\le s$ and
such that for any $\bfs f\in\PpoK^{\bfs D}$ with
$P_{\mathrm{nons}}(\bfs f)\not=0$, the variety $Z(\bfs
f)\subset\PpoK^n$ is a nonsingular complete intersection of
dimension $n-s$ and degree $\delta$.
\end{theorem}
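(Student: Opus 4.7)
Following the template of Theorem \ref{th: V is ideal-theoret complete int}, the plan is to take $P_{\mathrm{nons}}$ to be the multivariate resultant of the $n+1$ homogeneous polynomials $F_1\klk F_s,J_{s+1}(\bfs F,X)\klk J_{n+1}(\bfs F,X)$ in the $n+1$ variables $X_0\klk X_n$, viewed as an element of $\K[\mathrm{coeffs}(\bfs F)]$. By \cite{CoLiOS98}, Chapter 3, Theorem 2.3, $P_{\mathrm{nons}}(\bfs f)=0$ if and only if the system $f_1(X)=\cdots=f_s(X)=J_{s+1}(\bfs f,X)=\cdots=J_{n+1}(\bfs f,X)=0$ admits a projective zero, i.e., if and only if $\bfs f$ lies in the image of the projection $\pi:W_{\mathrm{nons}}\to\PpoK^{\bfs D}$ on the first argument. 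Lemma \ref{lemma: J_k and f_k are regular sequence} guarantees that the $n+1$ defining polynomials of $W_{\mathrm{nons}}$ form a regular sequence in $\K[\mathrm{coeffs}(\bfs F),X]$, so $W_{\mathrm{nons}}$ has pure dimension $|\bfs D|+n-(n+1)=|\bfs D|-1$ in $\PpoK^{\bfs D}\times\PpoK^n$; hence $\pi$ is not dominant and $P_{\mathrm{nons}}$ is not identically zero.

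For the degree bound I would apply \cite{CoLiOS98}, Chapter 3, Theorem 3.1, to the formal multivariate resultant of $n+1$ polynomials of degrees $d_1\klk d_s,\sigma\klk\sigma$ (with $\sigma$ repeated $n-s+1$ times): this formal resultant has degree $(\delta/d_i)\sigma^{n-s+1}$ in the coefficients of the $i$-th slot for $1\le i\le s$, and degree $\delta\sigma^{n-s}$ in the coefficients of each of the $n-s+1$ slots corresponding to the $J_k$'s. The key observation is that $J_k$ is the determinant of an $s\times s$ matrix whose $i$-th row is linear in $\mathrm{coeffs}(F_i)$, so every coefficient of $J_k$ (as a polynomial in $X$) is linear in each $\mathrm{coeffs}(F_i)$. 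Substituting these specific coefficients into the formal resultant, the total degree of $P_{\mathrm{nons}}$ in $\mathrm{coeffs}(F_i)$ is at most
\[
(\delta/d_i)\sigma^{n-s+1}+(n-s+1)\delta\sigma^{n-s}=\sigma^{n-s}\delta(\sigma/d_i+n-s+1),
\]
giving the first inequality; the second, $\le(\sigma+n)\sigma^{n-s}\delta$, follows from $\sigma/d_i\le\sigma$ and $n-s+1\le n$.

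Finally, suppose $P_{\mathrm{nons}}(\bfs f)\ne 0$. Then at every $x\in Z(\bfs f)$ at least one of $J_{s+1}(\bfs f)(x)\klk J_{n+1}(\bfs f)(x)$ is nonzero, so the Jacobian matrix $(\partial f_i/\partial X_j)_{1\le i\le s,\,0\le j\le n}$ has an $s\times s$ submatrix of nonzero determinant at $x$ and hence rank $s$. Since $s<n$, $Z(\bfs f)$ is nonempty in $\PpoK^n$; the Jacobian criterion then shows that $Z(\bfs f)$ is smooth of pure dimension $n-s$ everywhere, and smoothness forces the ideal $(f_1\klk f_s)$ to be radical, so $Z(\bfs f)$ is an ideal-theoretic complete intersection of dimension $n-s$ and, by B\'ezout (\ref{eq: Bezout theorem}), degree $\delta$. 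The main obstacle I expect is the degree bookkeeping of the preceding paragraph: one must carefully disentangle the ``native'' degree of the resultant in the $F_i$-slot from the contribution produced when the substituted $J_k$'s are expanded as polynomials in $\mathrm{coeffs}(\bfs F)$, across all $n-s+1$ Jacobian-minor slots simultaneously.
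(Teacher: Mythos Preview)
Your argument is correct and follows the same overall strategy as the paper: both define $P_{\mathrm{nons}}$ via the multivariate resultant of $F_1\klk F_s,J_{s+1}(\bfs F,X)\klk J_{n+1}(\bfs F,X)$, use Lemma \ref{lemma: J_k and f_k are regular sequence} to see that $W_{\mathrm{nons}}$ has pure dimension $|\bfs D|-1$ so that $P_{\mathrm{nons}}\ne 0$, and then verify that $P_{\mathrm{nons}}(\bfs f)\ne 0$ forces $Z(\bfs f)$ to be a smooth complete intersection.

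The only genuine difference is in the degree bookkeeping. You compute the degree bound directly from the resultant degree formula (\cite{CoLiOS98}, Chapter 3, Theorem 3.1) together with the multilinearity of each $J_k$ in the blocks $\mathrm{coeffs}(F_1)\klk\mathrm{coeffs}(F_s)$, obtaining $\sigma^{n-s}\delta(\sigma/d_i+n-s+1)$ by the substitution argument you describe. The paper instead takes $P_{\mathrm{nons}}$ to be a polynomial of \emph{minimal} degree defining $\pi(W_{\mathrm{nons}})$ and bounds its multidegree via a Chow-ring computation: it evaluates the class $[W_{\mathrm{nons}}]=\big(\prod_i(d_i\theta_0+\theta_i)\big)(\sigma\theta_0+\theta_1+\cdots+\theta_s)^{n-s+1}$ using the multihomogeneous B\'ezout theorem (\ref{eq: multihomogeneous Bezout th}) and then applies the projection inequality (\ref{eq: multihomogeneous mixed degree proj}). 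Both routes yield the identical bound. Your approach is more elementary and avoids the Chow-ring machinery; the paper's approach, on the other hand, actually bounds the degree of the reduced hypersurface $\pi(W_{\mathrm{nons}})$ rather than of the (possibly non-reduced) resultant, which is a marginally sharper statement in principle, though irrelevant for the theorem as stated.
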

\begin{proof}
From Lemma \ref{lemma: J_k and f_k are regular sequence} we conclude
that the incidence variety $W_{\mathrm {nons}}$ is of pure dimension
$|\bfs D|-1$. Let $\pi:\PpoK^{\bfs D}\times\PpoK^n\to\PpoK^{\bfs D}$
be the projection to the first argument. Since $\pi$ is a closed
mapping, it follows that $\pi(W_{\mathrm {nons}})$ is a closed
subset of $\PpoK^{\bfs D}$ of dimension at most $|\bfs D|-1$. In
particular, there exists $\bfs f\in\PpoK^{\bfs D}$ not belonging to
$\pi(W_{\mathrm {nons}})$, which means that the equations $\{\bfs
f(x)=\bfs{0}, J_{s+1}(\bfs f)(x)=0\klk J_{n+1}(\bfs f)(x)=0\}$
define the empty set.

Let $D_{s+1}=\cdots=D_{n+1}=\binom{\sigma+n}{n}-1$, let $\bfs
D'=(D_1\klk D_{n+1})$ and $\PpoK^{\bfs D'}=\PpoK^{\bfs D} \times
\PpoK^{D_{s+1}}\times\cdots\times\PpoK^{D_{n+1}}$. Let
$$\K[\mathrm{coeffs}(\bfs F')]= \K[\mathrm{coeffs}(\bfs
F),\mathrm{coeffs}(F_{s+1})\klk \mathrm{coeffs}(F_{n+1})]$$ and let
$P\in\K[\mathrm{coeffs}(\bfs F')]$ be the multivariate resultant of
formal polynomials $F_1 \klk F_{n+1}$ of degrees $d_1\klk
d_s,\sigma\klk \sigma$. Denote by
$\mathcal{H}_{\mathrm{gennons}}\subset\PpoK^{\bfs D'}$ the
hypersurface defined by $P$. For any $\bfs f\in\PpoK^{\bfs D}$ we
have $\bfs f\in\pi(W_{\mathrm {nons}})$ if and only if the
$(n+1)$--tuple $\big(\bfs f,J_{s+1}(\bfs f)\klk J_{n+1}(\bfs
f)\big)$ belongs to $\mathcal{H}_{\mathrm{gennons}}$. Let
$\phi:\PpoK^{\bfs D}\to\PpoK^{\bfs D'}$ be the regular mapping
defined as $\phi(\bfs f)=\big(\bfs f,J_{s+1}(\bfs f)\klk
J_{n+1}(\bfs f)\big)$. Then $\pi(W_{\mathrm {nons}})$ is the
hypersurface of $\PpoK^{\bfs D}$ defined by the polynomial
$\phi^*(P)$, where $\phi^*:\K[\mathrm{coeffs}(\bfs
F')]\to\K[\mathrm{coeffs}(\bfs F)]$ is the $\K$--algebra
homomorphism defined by $\phi$.

Next we estimate the multidegree of $\pi(W_{\mathrm {nons}})$. For
this purpose, we consider the class $[W_{\mathrm {nons}}]$ of
$W_{\mathrm {nons}}$ in the Chow ring $\mathcal{A}^*(\PpoK^{\bfs
D}\times\PpoK^n)$ of $\PpoK^{\bfs D}\times\PpoK^n$. We denote by
$\theta_i$ the class of the inverse image of a hyperplane of
$\PpoK^{D_i}$ under the $i$th canonical projection $\PpoK^{\bfs
D}\times\PpoK^n\to\PpoK^{D_i}$ for $1\le i\le s$ and by $\theta_0$
the class of the inverse image of a hyperplane of $\PpoK^n$ under
the projection $\PpoK^{\bfs D}\times\PpoK^n\to\PpoK^n$ to the second
argument. By the definition (\ref{eq: definition W_s}) of
$W_{\mathrm {nons}}$ and the multihomogeneous B\'ezout theorem
(\ref{eq: multihomogeneous Bezout th}), we obtain
\begin{align*}
[W_{\mathrm {nons}}]&=\bigl( \prod_{1 \le i \le s} (d_i\theta_0+
\theta_i) \bigr) (\sigma\theta_0+
\theta_1\plp\theta_s  )^{n-s+1}\\
&=\sigma^{n-s+1}\delta\,\theta_0^{n+1}+\sigma^{n-s}\delta\sum_{1 \le
i \le s} (\frac{\sigma}{d_i} +n-s+1 )\theta_0^n\theta_i+
\mathcal{O} (\theta_0^{n-1}),
\end{align*}
where $\mathcal{O}\big(\theta_0^{n-1}\big)$ is a sum of terms of
degree at most $n-1$ in $\theta_0$.

On the other hand, by definition $[\pi(W_{\mathrm
{nons}})]=\deg_{\mathrm{coeffs}(F_1)}\!P_{\mathrm{nons}}\,\theta_1
\plp\deg_{\mathrm{coeffs}(F_s)} \!P_{\mathrm{nons}}\,\theta_s$,
where $P_{\mathrm{nons}} \in\K[\mathrm{coeffs}(\bfs F)]$ is a
polynomial of minimal degree defining $\pi(W_{\mathrm {nons}})$. Let
$\jmath:\mathcal{A}^*\big(\PpoK^{\bfs D}\big) \hookrightarrow
\mathcal{A}^*\big(\PpoK^{\bfs D}\times\PpoK^n\big)$ be the injective
$\Z$--map $Q\mapsto\theta_0^nQ$ induced by $\pi$. Then (\ref{eq:
multihomogeneous mixed degree proj}) shows that
$\jmath([\pi(W_{\mathrm {nons}})])\le [W_{\mathrm {nons}}]$, namely,
$$\jmath([\pi(W_{\mathrm {nons}})])=
\sum_{1 \le i \le s}\deg_{\mathrm{coeffs}(F_i)}\!P_{\mathrm{nons}}
\,\theta_0^n\theta_i \le [W_{\mathrm {nons}}],
$$
where the inequality is understood in a coefficient--wise sense.
This implies
\begin{equation}\label{eq: upper bound degree C}
\deg_{\mathrm{coeffs}(F_i)}\!P_{\mathrm{nons}}\le \sigma^{n-s}\delta
(\frac{\sigma}{d_i}+n-s+1 )
\end{equation}
for $1\le i\le s$. We claim that the polynomial $P_{\mathrm{nons}}$
satisfies the requirements of the theorem.

In order to show this claim, let $\bfs f=(f_1\klk f_s)\in
\PpoK^{\bfs D}$ with $P_{\mathrm{nons}}(\bfs f)\not=0$. Then $\{\bfs
f=0,J_{s+1}(\bfs f)=0\klk J_{n+1}(\bfs f)=0\}$ is the empty
projective subvariety of $\PpoK^n$. This implies that $Z(\bfs f)$
has dimension $n-s$ and $f_1\klk f_s$ form a regular sequence of
$\oK[X]$. Furthermore, \cite{Eisenbud95}, Theorem 18.15, proves that
$f_1\klk f_s$ generate a radical ideal of $\oK[X]$. In particular,
the singular locus of $Z(\bfs f)$ is contained in $\{\bfs
f=0,J_{s+1}(\bfs f)=0\klk J_{n+1}(\bfs f)=0\}$, which is an empty
variety, showing thus that $Z(\bfs f)$ is a smooth variety.
\end{proof}

Let $\bfs f\in\PpoK^{\bfs D}$ with $P_{\mathrm{nons}}(\bfs
f)\not=0$. Then $Z(\bfs f)\subset\PpoK^n$ is a nonsingular complete
intersection which, according to Theorem \ref{theorem: normal
complete int implies irred}, is absolutely irreducible. As a
consequence, the hypersurface
$\mathcal{H}_{\mathrm{nons}}=Z(P_{\mathrm{nons}})$ contains all the
$\bfs f\in\PpoK^{\bfs D}$ for which $Z(\bfs f)$ is not absolutely
irreducible. Below we describe a
hypersurface in $\PpoK^{\bfs D}$ of lower degree which contains all
these systems (Theorem \ref{th: V is irreducible}).

In \cite{Benoist12}, Theorem 1.3, it is shown that the set of $\bfs
f\in\PpoK^{\bfs D}$ for which the variety $Z(\bfs f)\subset\PpoK^n$
is not a nonsingular complete intersection of dimension $n-s$ and
degree $\delta$ is a hypersurface of $\PpoK^{\bfs D}$. Furthermore,
the author determines exactly the degrees 
of this hypersurface.
As mentioned in the introduction to this paper, this
result is achieved by combining a study of dual varieties of
nonsingular toric varieties in characteristic zero and projective
duality theory in positive characteristic. In particular, for $s=1$
the Benoist bound 
becomes the Boole bound $(n+1)(d_1-1)^n$. On the other hand, the
bound of Theorem \ref{th: V is smooth} is
$\big((n+1)d_1-1)(d_1-1)^{n-1}$ in this case, which is fairly
close to the Boole bound.
%
%
\subsubsection{Smooth complete intersections defined over $\fq$}
Next we apply Theorem \ref{th: V is smooth} in the case $\K=\fq$.
By Theorem \ref{th: V is smooth} and
Proposition \ref{propFromDegreeToProbability},
and with $p_{\bfs D}$ from (\ref{allSequences}),
 we obtain a
lower bound, close to the trivial upper bound, on the number of those
systems that define a smooth complete intersection.
\begin{corollary}\label{coro: number of V smooth}
In the notation (\ref{notationD}), assume that 
$q \ge {s(\sigma+n) \sigma^{n-s} \delta}/3$.
Let $N_{\mathrm{nons}}$
be the number of $\bfs f\in\Pp^{\bfs D}(\fq)$
for which $Z(\bfs f)\subset\PpF^n$ is a nonsingular complete
intersection of dimension $n-s$ and degree $\delta=d_1\cdots d_s$.
Then
\begin{equation}\label{eq: number of V smooth}
1 - \frac {s(\sigma+n) \sigma^{n-s} \delta} q \le
\frac {N_{\mathrm{nons}}}
{p_{\bfs D}} \le 1.
\end{equation}
\end{corollary}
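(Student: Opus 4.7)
The plan is to combine Theorem \ref{th: V is smooth} with the counting tool in Proposition \ref{propFromDegreeToProbability}, treating $P_{\mathrm{nons}}$ as the obstruction polynomial whose nonzeros parametrize the ``good'' systems.

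First I would invoke Theorem \ref{th: V is smooth} to produce a nonzero multihomogeneous $P_{\mathrm{nons}}\in\fq[\mathrm{coeffs}(\bfs F)]$ with
\[
\deg_{\mathrm{coeffs}(F_i)} P_{\mathrm{nons}}\le (\sigma+n)\,\sigma^{n-s}\,\delta
\]
for every $1\le i\le s$, and with the property that $P_{\mathrm{nons}}(\bfs f)\neq 0$ forces $Z(\bfs f)$ to be a nonsingular complete intersection of dimension $n-s$ and, by the B\'ezout theorem \eqref{eq: Bezout theorem}, of degree $\delta$. In particular, the set
\[
S=\{\bfs f\in\Pp^{\bfs D}(\fq):P_{\mathrm{nons}}(\bfs f)\neq 0\}
\]
is contained in the set counted by $N_{\mathrm{nons}}$, so $\#S\le N_{\mathrm{nons}}\le p_{\bfs D}$.

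Next I would apply Proposition \ref{propFromDegreeToProbability} to $P_{\mathrm{nons}}$, taking the uniform bound $e=e_i=(\sigma+n)\,\sigma^{n-s}\,\delta$. The hypothesis $q\ge es/3=s(\sigma+n)\sigma^{n-s}\delta/3$ of the corollary is exactly the hypothesis needed for the leftmost lower bound in that proposition (and a fortiori for $q\ge e$, which justifies the use of Fact \ref{prop: upper bound multihomogeneous}). The proposition then yields
\[
\frac{\#S}{p_{\bfs D}}\ge 1-\frac{se}{q}=1-\frac{s(\sigma+n)\sigma^{n-s}\delta}{q},
\]
and combining with $\#S\le N_{\mathrm{nons}}\le p_{\bfs D}$ gives \eqref{eq: number of V smooth}.

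There is essentially no obstacle: the only thing to double-check is bookkeeping, namely that the degree estimate from Theorem \ref{th: V is smooth} feeds correctly into Proposition \ref{propFromDegreeToProbability} (uniform bound $e$ across all $s$ coordinate groups) and that the hypothesis on $q$ matches the threshold $es/3$ required for the concise lower bound. If one wanted a sharper result, one could instead use the per-coordinate bound $e_i=\sigma^{n-s}\delta(\sigma/d_i+n-s+1)$ and the product form $\prod_i(1-e_i/q)$ of the lower bound, but the stated corollary only records the simpler estimate.
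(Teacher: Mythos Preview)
Your proposal is correct and follows exactly the paper's approach: the paper's proof is nothing more than the one-line observation that Theorem \ref{th: V is smooth} feeds into Proposition \ref{propFromDegreeToProbability} with the uniform bound $e=(\sigma+n)\sigma^{n-s}\delta$, and you have spelled this out in detail. One small slip: the parenthetical ``a fortiori for $q\ge e$'' is not literally true when $s\le 2$, but this does not affect the argument since in that range $1-se/q$ is nonpositive and the lower bound holds trivially.
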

%
%
\subsection{Absolutely irreducible complete intersections}
With notations as in the previous section, in this section we obtain
an estimate on the number of polynomial systems defined over an
arbitrary field $\K$ such that the corresponding projective variety
is an absolutely irreducible complete intersection. As the approach
is similar to that of Sections \ref{subsec: ideal-theoretic complete
intersections} and \ref{subsec: smooth complete intersections}, we
shall be brief.

Let $J_{s+1}(\bfs F,X)$ and $J_{s+2}(\bfs F,X)$ be the Jacobian
determinants defined in (\ref{eq: definition jacobian
determinants}). Consider the incidence variety
\begin{equation}\label{eq: definition W_n}
W_{\mathrm{irr}}=\{(\bfs f,x)\in\PpoK^{\bfs D}\times\PpoK^n:\bfs
f(x)=\bfs{0}, J_{s+1}(\bfs f)(x)=0,J_{s+2}(\bfs f)(x)=0\}.
\end{equation}
Arguing as in the proof of Lemma \ref{lemma: J_k and f_k are regular
sequence}, we obtain the following.
\begin{lemma}\label{lemma: J_s,s+1 and f_k are regular sequence}
The polynomials $J_{s+1}(\bfs F,X), J_{s+2}(\bfs F,X),F_1 \klk F_s$
form a regular sequence of $\K[\mathrm{coeffs}(\bfs F),X]$.
\end{lemma}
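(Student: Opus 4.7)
The plan is to mimic the argument of Lemma~\ref{lemma: J_k and f_k are regular sequence} almost verbatim, since the polynomials involved are a subcollection of those treated there. Working in the polynomial ring $\K[\mathrm{coeffs}(\bfs F),X]$, I would first establish that $J_{s+1}(\bfs F,X)$ is a nonzero irreducible element, then that $J_{s+2}(\bfs F,X)$ is not a zero divisor modulo $J_{s+1}(\bfs F,X)$, and finally that for each $i$, $F_i$ is not a zero divisor modulo $J_{s+1}(\bfs F,X),J_{s+2}(\bfs F,X),F_1\klk F_{i-1}$.

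For the first step, set $\bfs{\alpha}_{s+1}=(d_1-1,0\klk 0,1,0\klk 0)$ (with the $1$ in position $s$). As in the earlier lemma, the monomial $X_0^{d_1-1}X_s$ appears in $\partial F_1/\partial X_s$, so $J_{s+1}(\bfs F,X)$ is a primitive polynomial of degree $1$ in the indeterminate $F_{1,\bfs{\alpha}_{s+1}}$ over $\K[\mathrm{coeffs}(\bfs F)\setminus\{F_{1,\bfs\alpha_{s+1}}\},X]$; hence it is irreducible. For the second step, take $\bfs{\alpha}_{s+2}=(d_1-1,0\klk 0,0,1,0\klk 0)$ (with the $1$ in position $s+1$). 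Then $J_{s+2}(\bfs F,X)$ depends on $F_{1,\bfs{\alpha}_{s+2}}$ but not on $F_{1,\bfs{\alpha}_{s+1}}$, while $J_{s+1}(\bfs F,X)$ does not depend on $F_{1,\bfs{\alpha}_{s+2}}$. Therefore the hypersurface $\{J_{s+1}=0\}$ is a cylinder in the $F_{1,\bfs{\alpha}_{s+2}}$ direction and $J_{s+2}$ is an irreducible nonconstant element in that variable, so it cannot be a zero divisor modulo $J_{s+1}$.

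For the third step, put $\bfs{\gamma}_i=(d_i,0\klk 0)$ for $1\le i\le s$. Since none of the entries of the $s\times s$ submatrices defining $J_{s+1}(\bfs F,X)$ and $J_{s+2}(\bfs F,X)$ uses a partial derivative with respect to $X_0$, neither Jacobian depends on $F_{i,\bfs{\gamma}_i}$; and for $j<i$, clearly $F_j$ does not depend on $F_{i,\bfs\gamma_i}$ either. Thus $F_i$ is a nonzero polynomial of degree $1$ in $F_{i,\bfs{\gamma}_i}$ over a ring in which this variable is transcendental modulo the previously imposed relations, hence not a zero divisor modulo $J_{s+1},J_{s+2},F_1\klk F_{i-1}$.

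The main (and only) subtlety is making sure the bookkeeping of which coefficient variables occur in which generator is correctly isolated; once that is done, each nonzero-divisor check reduces to the observation that a polynomial which is linear and primitive in a fresh indeterminate cannot be a zero divisor in any overring in which that indeterminate remains algebraically independent from the relations so far. No new geometric input beyond Lemma~\ref{lemma: J_k and f_k are regular sequence} is required.
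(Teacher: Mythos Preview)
Your proposal is correct and follows exactly the approach the paper intends: the paper's proof of this lemma is simply ``Arguing as in the proof of Lemma~\ref{lemma: J_k and f_k are regular sequence}, we obtain the following,'' and you have carefully spelled out that specialization to the two Jacobians $J_{s+1},J_{s+2}$. The bookkeeping of coefficient variables you carry out matches the paper's argument precisely.
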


Our next result asserts that for a generic $s$--tuple $\bfs f$ as
above, the corresponding variety is an absolutely irreducible
complete intersection. We also provide estimates on the degree of a
hypersurface of $\PpoK^{\bfs D}$ containing the elements $\bfs f$
for which $Z(\bfs f)$ is not absolutely irreducible.
\begin{theorem}\label{th: V is irreducible} There exists a nonzero
multihomogeneous polynomial $P_{\mathrm{irr}}\in
\K[\mathrm{coeffs}(\bfs F)]$ with
$$\deg_{\mathrm{coeffs}(F_i)} P_{\mathrm{irr}} \le
\sigma\delta (\frac{\sigma}{d_i}+2 )\le 3\sigma^2\delta
$$
for $1\le i\le s$ such that for any $\bfs f\in\PpoK^{\bfs D}$ with
$P_{\mathrm{irr}}(\bfs f)\not=0$, the variety $Z(\bfs
f)\subset\PpoK^n$ is an absolutely irreducible complete intersection
of dimension $n-s$ and degree $\delta$.
\end{theorem}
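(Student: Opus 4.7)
The plan is to adapt the proof of Theorem~\ref{th: V is smooth}, using only the two Jacobian determinants $J_{s+1}(\bfs F, X)$ and $J_{s+2}(\bfs F, X)$ rather than all $n-s+1$ of them. Instead of forcing the singular locus of $Z(\bfs f)$ to be empty, the two-minor version merely forces it to have codimension at least $2$. That is enough to make $Z(\bfs f)$ normal, and Fact~\ref{theorem: normal complete int implies irred} then supplies absolute irreducibility via the Hartshorne connectedness theorem. The improved degree bound comes directly from this relaxation: in the Chow-class computation the factor $(\sigma\theta_0+\theta_1+\cdots+\theta_s)$ enters to the power $2$ rather than $n-s+1$.

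By Lemma~\ref{lemma: J_s,s+1 and f_k are regular sequence}, the incidence variety $W_{\mathrm{irr}}$ of (\ref{eq: definition W_n}) has pure dimension $|\bfs D|+n-s-2$. I would cut this down by the linear subspace $\{X_{s+2}=\cdots=X_n=0\}$ to obtain an auxiliary variety $W_{\mathrm{irr}}^{\,\circ}\subset\PpoK^{\bfs D}\times\PpoK^n$ of pure dimension $|\bfs D|-1$, so that the closure of its image under the projection $\pi\colon\PpoK^{\bfs D}\times\PpoK^n\to\PpoK^{\bfs D}$ is contained in a hypersurface $\{P_{\mathrm{irr}}=0\}\subsetneq\PpoK^{\bfs D}$; in particular $P_{\mathrm{irr}}$ is nonzero. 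The multihomogeneous B\'ezout theorem (\ref{eq: multihomogeneous Bezout th}) gives
\[
[W_{\mathrm{irr}}^{\,\circ}]=\prod_{i=1}^{s}(d_i\theta_0+\theta_i)\cdot(\sigma\theta_0+\theta_1+\cdots+\theta_s)^2\cdot \theta_0^{\,n-s-1}
\]
in $\mathcal{A}^*(\PpoK^{\bfs D}\times\PpoK^n)$, and the projection inequality (\ref{eq: multihomogeneous mixed degree proj}) applied through the injection $\jmath(Q)=\theta_0^n Q$ bounds $\deg_{\mathrm{coeffs}(F_i)}P_{\mathrm{irr}}$ by the coefficient of $\theta_0^n\theta_i$ in the class above. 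A short expansion (one term coming from taking $\theta_0^s$ from the first product and $2\sigma\theta_0\theta_i$ from the square, the other from taking $(\delta/d_i)\theta_0^{s-1}\theta_i$ and $\sigma^2\theta_0^2$) shows that coefficient equals $\prod_{j\ne i}d_j\cdot\sigma^2+\delta\cdot 2\sigma=\sigma\delta(\sigma/d_i+2)$, matching the claimed bound.

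For any $\bfs f$ with $P_{\mathrm{irr}}(\bfs f)\ne 0$, the system $\{\bfs f=\bfs 0,\ J_{s+1}(\bfs f)=J_{s+2}(\bfs f)=0,\ X_{s+2}=\cdots=X_n=0\}$ is empty in $\PpoK^n$, so $Z(\bfs f,J_{s+1}(\bfs f),J_{s+2}(\bfs f))$ has dimension at most $n-s-2$. Since each component of $Z(\bfs f)$ has dimension at least $n-s$, this forces $Z(\bfs f)$ to be of pure dimension $n-s$ and its singular locus to have codimension at least two. As in the proof of Theorem~\ref{th: V is ideal-theoret complete int}, \cite{Eisenbud95}, Theorem~18.15, then yields that $(f_1,\ldots,f_s)$ is a radical ideal of $\oK[X]$, so $Z(\bfs f)$ is an ideal-theoretic complete intersection; being normal, by Fact~\ref{theorem: normal complete int implies irred} it is absolutely irreducible, and its degree is $\delta$ by the B\'ezout theorem (\ref{eq: Bezout theorem}).

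The main obstacle I expect is the justification that $W_{\mathrm{irr}}^{\,\circ}$ has the expected pure dimension $|\bfs D|-1$, equivalently, that the hyperplanes $X_{s+2},\ldots,X_n$ extend the regular sequence of Lemma~\ref{lemma: J_s,s+1 and f_k are regular sequence}, so that no irreducible component of $W_{\mathrm{irr}}$ lies in the linear subspace $\{X_{s+2}=\cdots=X_n=0\}$. This should follow from a fiber-dimension argument for the second projection $W_{\mathrm{irr}}\to\PpoK^n$: a generic fiber is cut out in $\PpoK^{\bfs D}$ by $s$ linear and two multilinear conditions, hence has dimension at most $|\bfs D|-s-2$, so the image is dense in $\PpoK^n$ and no component of $W_{\mathrm{irr}}$ can sit inside a proper linear subspace of $\PpoK^n$.
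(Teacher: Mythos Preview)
Your overall strategy matches the paper's proof almost exactly: cut $W_{\mathrm{irr}}$ by $\{X_{s+2}=\cdots=X_n=0\}$ to get a variety $W_{\mathrm{irr}}'$ of pure dimension $|\bfs D|-1$, project to $\PpoK^{\bfs D}$ to obtain the hypersurface $\{P_{\mathrm{irr}}=0\}$, bound the multidegree via the Chow-class computation $[W_{\mathrm{irr}}']=\prod_i(d_i\theta_0+\theta_i)\,(\sigma\theta_0+\sum_i\theta_i)^2\,\theta_0^{n-s-1}$ together with (\ref{eq: multihomogeneous mixed degree proj}), and conclude normality and absolute irreducibility exactly as you indicate.

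The one genuine gap is in your final paragraph. The fiber-dimension sketch does not do what you claim: knowing that the generic fiber of $W_{\mathrm{irr}}\to\PpoK^n$ has dimension $|\bfs D|-s-2$, and hence that the image is dense, does \emph{not} prevent an individual irreducible component of $W_{\mathrm{irr}}$ from lying inside $\{X_{s+2}=\cdots=X_n=0\}$. (A union of two equidimensional components can surject even if one of them maps into a proper subspace.) To salvage your route you would have to show that \emph{every} fiber has dimension at most $|\bfs D|-s-2$, so that every component surjects; that in turn requires checking that the $s+2$ defining conditions remain independent at every $x\in\PpoK^n$, which is more work and is delicate in positive characteristic.

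The paper bypasses this entirely with a cleaner observation: once $X_{s+2},\ldots,X_n$ are set to zero, the intersection $W_{\mathrm{irr}}'$ is literally the incidence variety $W_{\mathrm{nons}}$ of (\ref{eq: definition W_s}) for generic polynomials in $\K[X_0,\ldots,X_{s+1}]$ (with ambient dimension $s+1$ in place of $n$, so that the two Jacobians $J_{s+1},J_{s+2}$ are exactly all of the minors appearing there). Lemma~\ref{lemma: J_k and f_k are regular sequence} then gives directly that $W_{\mathrm{irr}}'$ has pure dimension $|\bfs D|-1$. This is both shorter and avoids the component-by-component issue.
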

\begin{proof}
Lemma \ref{lemma: J_s,s+1 and f_k are regular sequence} shows that
$W_{\mathrm{irr}}$ is of pure dimension $|\bfs D|+n-s-2$. Let
$\pi:\PpoK^{\bfs D}\times\PpoK^n\to\PpoK^{\bfs D}$ be the projection
to the first argument. Let
$W_{\mathrm{irr}}'=W_{\mathrm{irr}}\cap\{X_{s+2}=\cdots=X_n=0\}$. As
$\pi$ is a closed mapping, $\pi(W_{\mathrm{irr}}')$ is a closed
subset of $\PpoK^{\bfs D}$. Observe that $W_{\mathrm{irr}}'$ may be
seen as an incidence variety analogous to (\ref{eq: definition W_s})
associated to generic polynomials of $\K[X_0\klk X_{s+1}]$ of
degrees $d_1\klk d_s$. Therefore, by Lemma \ref{lemma: J_k and f_k
are regular sequence} we deduce that $W_{\mathrm{irr}}'$ is of pure
dimension $|\bfs D|-1$. In particular, $\pi(W_{\mathrm{irr}}')$ has
dimension at most $|\bfs D|-1$ and hence there exists $\bfs
f\in\PpoK^{\bfs D}\setminus \pi(W_{\mathrm{irr}}')$. For such an
$\bfs f$, the equations $\{\bfs f=0,J_{s+1}(\bfs f)=0,J_{s+2}(\bfs
f)=0,X_{s+2}=0,\klk X_n=0\}$ define the empty projective set.

This shows that the multivariate resultant
$P\in\K[\mathrm{coeffs}(\bfs F)]$ of formal polynomials $F_1 \klk
F_s$ of degrees $d_1\klk d_s$ and the polynomials $J_{s+1}(\bfs
F),J_{s+2}(\bfs F),X_{s+2}\klk X_n$ is nonzero. Denote by
$\mathcal{H}_{\mathrm{irr}}\subset\PpoK^{\bfs D}$ the hypersurface
defined by $P$. Observe that
$$\mathcal{H}_{\mathrm{irr}}=\pi(W_{\mathrm{irr}}')=
\pi\big(W_{\mathrm{irr}}\cap\{X_{s+2}= \cdots=X_n=0\}\big).$$
For any 
$\bfs f\in\PpoK^{\bfs D}$, if the variety $Z\big(\bfs f,J_{s+1}(\bfs
f),J_{s+2}(\bfs f)\big)\subset\PpoK^n$ has dimension strictly
greater than $n-s-2$, then the multivariate resultant of $\bfs
f,J_{s+1}(\bfs f),J_{s+2}(\bfs f),X_{s+2}\klk X_n$ vanishes, that
is, $\bfs f$ belongs to $\mathcal{H}_{\mathrm{irr}}$. We conclude
that, if $\bfs f\notin \mathcal{H}_{\mathrm{irr}}$, then $Z\big(\bfs
f,J_{s+1}(\bfs f),J_{s+2}(\bfs f)\big)$ is of pure dimension
$n-s-2$. In particular, $Z(\bfs f)$ is a normal complete
intersection, which is absolutely irreducible by Theorem
\ref{theorem: normal complete int implies irred}.

Now we estimate the multidegree of $\mathcal{H}_{\mathrm{irr}}$. For
this purpose, we consider the class $[W_{\mathrm{irr}}']$ of
$W_{\mathrm{irr}}'$ in the Chow ring $\mathcal{A}^*(\PpoK^{\bfs
D}\times\PpoK^n)$ of $\PpoK^{\bfs D}\times\PpoK^n$. Denote by
$\theta_i$ the class of the inverse image of a hyperplane of
$\PpoK^{D_i}$ under the $i$th canonical projection $\PpoK^{\bfs
D}\times\PpoK^n\to\PpoK^{D_i}$ for $1\le i\le s$ and by $\theta_0$
the class of the inverse image of a hyperplane of $\PpoK^n$ under
the projection $\PpoK^{\bfs D}\times\PpoK^n\to\PpoK^n$ to the second
argument. By the definition (\ref{eq: definition W_n}) of
$W_{\mathrm{irr}}$ and the multihomogeneous B\'ezout theorem
(\ref{eq: multihomogeneous Bezout th}), we obtain
\begin{align*}
[W_{\mathrm{irr}}']&=\bigl(\prod_{i=1}^s(d_i\theta_0+
\theta_i)\bigr)\left(\sigma\theta_0+
\theta_1\plp\theta_s\right)^2\theta_0^{n-s-1}\\
&=\sigma^2\delta\,\theta_0^{n+1}+\sigma\delta\sum_{1 \le i \le s}
 (\frac{\sigma}{d_i} +2 )\theta_0^{n}\theta_i+
\mathcal{O}(\theta_0^{n-1}),
\end{align*}
where $\mathcal{O}\big(\theta_0^{n-1}\big)$ is a sum of terms of
degree at most $n-1$ in $\theta_0$.

On the other hand, by definition
$[\mathcal{H}_{\mathrm{irr}}]=[\pi(W_{\mathrm{irr}}')]=
\deg_{\mathrm{coeffs}(F_1)}\!P_{\mathrm{irr}}\,\theta_1 \plp
\deg_{\mathrm{coeffs}(F_s)}\!P_{\mathrm{irr}}\,\theta_s$, where
$P_{\mathrm{irr}}\in\K[\mathrm{coeffs}(\bfs F)]$ is a polynomial of
minimal degree defining $\mathcal{H}_{\mathrm{irr}}$. Let
$\jmath:\mathcal{A}^*\big(\PpoK^{\bfs D}\big) \hookrightarrow
\mathcal{A}^*\big(\PpoK^{\bfs D}\times\PpoK^n\big)$ be the injective
$\Z$--map $Q\mapsto\theta_0^nQ$ induced by $\pi$. Then (\ref{eq:
multihomogeneous mixed degree proj}) shows that
$\jmath([\pi(W_{\mathrm{irr}}')])\le [W_{\mathrm{irr}}']$, 
where the inequality is understood in a coefficient--wise sense.
This implies that, for $1\le i\le s$, the following inequality
holds:
\begin{equation}\label{eq: upper bound degree H_n}
\deg_{\mathrm{coeffs}(F_i)}\!P_{\mathrm{irr}}\le \sigma\delta
\bigl(\frac{\sigma}{d_i}+2\bigr) \le 3 \sigma^2 \delta.
\end{equation}

We claim that the multihomogeneous polynomial $P_{\mathrm{irr}}$
satisfies the requirements of the theorem. Indeed, let $\bfs
f=(f_1\klk f_s)\in \PpoK^{\bfs D}$ be such that
$P_{\mathrm{irr}}(\bfs f)\not=0$. Then $\{\bfs f=0,J_{s+1}(\bfs
f)=0,J_{s+2}(\bfs f)=0\}$ is of pure dimension $n-s-2$. It follows
that $Z(\bfs f)$ has dimension $n-s$ and $f_1\klk f_s$ form a
regular sequence of $\oK[X]$. Furthermore, \cite{Eisenbud95},
Theorem 18.15, proves that $f_1\klk f_s$ generate a radical
ideal of $\oK[X]$. In particular, the singular locus of $Z(\bfs f)$
is contained in $\{\bfs f=0,J_{s+1}(\bfs f)=0,J_{s+2}(\bfs f)=0\}$,
which has dimension $s-2$, showing that $Z(\bfs f)$ is a normal
variety, and thus absolutely irreducible.
\end{proof}

The hypersurface $\mathcal{H}_{\mathrm{irr}}\subset\Pp^{\bfs D}$ of
the proof of Theorem \ref{th: V is irreducible} is defined by the
multivariate resultant $P=P^{[0\klk\,
s+1]}\in\K[\mathrm{coeffs}(\bfs F)]$ of formal polynomials $F_1 \klk
F_s$ of degrees $d_1\klk d_s$ and the polynomials $J_{s+1}(\bfs F)$,
$J_{s+2}(\bfs F)$, $X_{s+2}\klk X_n$. It is well-known that $P$ is
actually the multivariate resultant of the polynomials $F_i(X_0\klk
X_{s+1},0\klk 0)$ for $1\le i\le s$ and $J_k(\bfs F)(X_0\klk
X_{s+1},0\klk 0)$ for $s < k\le s+2$; see, e.g., \cite{CoLiOS98}, \S
3.3, Exercise 12. In particular, $P$ only depends on the
coefficients $F_{i,\bfs \alpha}$ with $\alpha_k=0$ for $s+2\le k\le
n$. By considering the sets of indices $[0\klk s,k]$ for $s < k \le
n$, one obtains multivariate resultants $P^{[0\klk s,k]}\in
\K[\mathrm{coeffs}(\bfs F)]$  whose set of common zeros in
$\PpoK^{\bfs D}$ contains all the $\bfs f$ not defining a normal
complete intersection of dimension $n-s$ and degree $\delta$.
Furthermore, it can be proved that the polynomials $P^{[0\klk
\,s,k]}$ for $s < k\le n$ form a regular sequence of
$\K[\mathrm{coeffs}(\bfs F)]$. This shows that the set of $\bfs f\in
\PpoK^{\bfs D}$ that do not define a normal complete intersection of
dimension $n-s$ and degree $\delta$ is contained in a subvariety of
$\PpoK^{\bfs D}$ of pure codimension $n-s$.
%
%
\subsubsection{Absolutely irreducible complete intersections defined over $\fq$}
Now we apply Theorem \ref{th: V is irreducible} in the case
$\K=\fq$. Combining Theorem \ref{th: V is irreducible} and
Proposition \ref{propFromDegreeToProbability}, we can
bound the number of polynomial systems as above defining absolutely
irreducible complete intersections.
\begin{corollary}\label{coro: number abs irred}
Suppose that
$q \ge {s  \sigma^2 \delta}$.
Let
$N_{\rm irr}^{\bfs d}$ be the number of $\bfs f\in\Pp^{\bfs D}(\fq)$
such that $Z(\bfs f)\subset\PpF^n$ is an absolutely irreducible
complete intersection of dimension $n-s$ and degree
$\delta=d_1\cdots d_s$. Then
\begin{equation}\label{eq: number of V abs irred}
1 - \frac{3s  \sigma^2 \delta} q \leq
\frac {N_{\mathrm{irr}}}  {p_{\bfs D}} \le 1.
\end{equation}
\end{corollary}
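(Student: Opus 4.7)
The plan is to derive this corollary as an immediate combination of the two earlier results at hand: the geometric Theorem~\ref{th: V is irreducible}, which produces an obstruction polynomial $P_{\mathrm{irr}}$ whose nonvanishing at $\bfs f$ forces $Z(\bfs f)$ to be an absolutely irreducible complete intersection, and the counting Proposition~\ref{propFromDegreeToProbability}, which bounds the number of common nonzeros of a multihomogeneous polynomial on $\Pp^{\bfs D}(\fq)$. This mirrors exactly how Corollary~\ref{coro: number ideal-theoret complete inters} was deduced from Theorem~\ref{th: V is ideal-theoret complete int}.

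Concretely, I would first invoke Theorem~\ref{th: V is irreducible} over $\K=\fq$ to obtain a nonzero multihomogeneous $P_{\mathrm{irr}}\in\fq[\mathrm{coeffs}(\bfs F)]$ with $\deg_{\mathrm{coeffs}(F_i)} P_{\mathrm{irr}} \le 3\sigma^2\delta$ for $1\le i\le s$, and with the property that $P_{\mathrm{irr}}(\bfs f)\neq 0$ implies that $Z(\bfs f)\subset\PpF^n$ is an absolutely irreducible complete intersection of dimension $n-s$ and degree $\delta$. Consequently every such $\bfs f$ is counted by $N_{\mathrm{irr}}$, so
\begin{equation*}
N_{\mathrm{irr}} \;\ge\; \#\{\bfs f\in\Pp^{\bfs D}(\fq) : P_{\mathrm{irr}}(\bfs f)\neq 0\}.
\end{equation*}

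Next I would apply Proposition~\ref{propFromDegreeToProbability} to $P_{\mathrm{irr}}$ with the uniform bound $e_i = e = 3\sigma^2\delta$. The hypothesis $q\ge s\sigma^2\delta$ is exactly $q\ge es/3$, which is precisely what is required for the concise leftmost lower bound of the proposition to hold; the auxiliary requirement $e\le q$ is then automatic. This yields
\begin{equation*}
\frac{\#\{\bfs f\in\Pp^{\bfs D}(\fq) : P_{\mathrm{irr}}(\bfs f)\neq 0\}}{p_{\bfs D}} \;\ge\; 1-\frac{se}{q} \;=\; 1-\frac{3s\sigma^2\delta}{q},
\end{equation*}
and combining this with the trivial upper bound $N_{\mathrm{irr}}/p_{\bfs D}\le 1$ gives (\ref{eq: number of V abs irred}).

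There is no real obstacle: the entire proof is a one-line substitution once the two ingredients are in place, and the only detail to check is that the hypothesis on $q$ matches the input demanded by Proposition~\ref{propFromDegreeToProbability}, which it does exactly through the identity $es/3 = s\sigma^2\delta$. As in the earlier corollaries, one could if desired record the sharper product form $\prod_i(1-e_i/q)$ of the lower bound, but the stated estimate is the clean version emphasized by the paper.
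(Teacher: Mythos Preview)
Your proposal is correct and follows exactly the paper's own argument: the paper simply states that the corollary is obtained by combining Theorem~\ref{th: V is irreducible} with Proposition~\ref{propFromDegreeToProbability}, and you have spelled out precisely this combination, including the identification $e=3\sigma^2\delta$ so that the hypothesis $q\ge s\sigma^2\delta$ matches the condition $q\ge es/3$ needed for the concise lower bound.
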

%
%
\section{Absolutely irreducible complete intersections
\\ of given dimension and degree}
\label{section:varyingDegree}
%
We fix
the dimension $n\ge 2$ of a projective ambient space $\PpoK^n$
 over an algebraic closure $\oK$ of a field $K$, the
codimension $1\le s<n$ and the degree $\delta>0$, and discuss
geometric properties which are satisfied by ``most'' complete
intersections with these features. We show that most complete
intersections in this sense are absolutely irreducible hypersurfaces
within some linear projective subspace. We also provide
estimates on the number of polynomial systems defined over a finite
field $\fq$ which fail to define such an absolutely irreducible
hypersurface.

More precisely, we consider the multiprojective variety $S_0$ of all
systems  $\bfs f = (f_1, \ldots, f_s)$  of homogeneous polynomials
with $1 \leq \deg f_i \leq \delta$ for all $i$. Given a degree
pattern $\bfs d=(d_1\klk d_s)\in \N^s$ with $d_1\ge d_2\ge\cdots\ge
d_s\ge 1$ and $d_1 \cdots d_s = \delta$, the systems $\bfs f$ with
degree pattern $\bfs d$ form a closed subvariety $S_{\bfs d}$ of
$S_0$. Their union $S = \bigcup_{\bfs d} S_{\bfs d}$ over all such
$\bfs d$ is the object studied in this section. We show that for
$\bfs d^{(\delta)} = (\delta, 1, \ldots, 1)$, $S_{\bfs
d^{(\delta)}}$ is the unique component of $S$ with maximal
dimension. All systems in $S_{\bfs d^{(\delta)}}$ describe a
hypersurface within a linear subspace of codimension $s-1$, which is
proper if $s \geq 2$.

A result of a similar flavor was shown by \cite{EiHa92}. They prove
that in the Chow variety of curves of degree $\delta$ in $\PpoK^n$,
most curves are planar and irreducible if $4n-8\le \delta$. Based on
this approach, \cite{CeGaMa13} provide numerical bounds for the
probability that a curve randomly chosen in the Chow variety over a
finite field is planar and irreducible. At first sight, it may look
surprising that a generic curve in this sense is planar. We show a
corresponding result for more general varieties: the dimension of
the variety of polynomial systems defining absolutely irreducible
hypersurfaces within some linear projective subspace is larger than
the dimension of systems defining other types of varieties.

The two models of varieties are different: we consider defining
systems of polynomials, while \cite{EiHa92}
and  \cite{CeGaMa13} deal with varieties
themselves. In their case of curves, they find that unions of lines
form a component of maximal dimension within the Chow variety if
$\delta<4n-8$. The corresponding unions of linear subspaces do not
turn up in our approach.
%
%
\subsection{Dimension of systems with a given B\'ezout number}
Assume that $s\ge 2$ and for any $\bfs d = (d_1, \ldots, d_s)$ with
$d_1\ge d_2\ge \cdots\ge d_s\ge 1$ and $d_1\ge 2$, let $S_{\bfs d}$
be the multiprojective variety of all homogeneous $f_1\klk f_s\in
K[X_0\klk X_n]$ with $\deg f_i = d_i$ for all $i$. The B\'ezout
number of such a system is $\delta (\bfs d) =d_1\cdots d_s$.
 According to Theorems \ref{th: V is ideal-theoret complete int},
 \ref{th: V is smooth}, and
\ref{th: V is irreducible}, the projective variety $V=Z(\bfs
f)\subset\PpoK^n$ defined by a generic $\bfs f=(f_1\klk f_s)$ is a
smooth absolutely irreducible complete intersection of dimension
$n-s$ and degree $\delta (\bfs d)$. As the degree pattern $(d_1\klk d_s)$ is not
fixed a priori, one may wonder how frequently a given pattern
arises. We shall show that the most typical pattern is that
corresponding to hypersurfaces, namely, $(b,1\klk 1)$.

For this purpose, for any degree pattern $\bfs d=(d_1\klk
d_s)\in \N^s$ as above, we abbreviate
\begin{align*}
\delta(\bfs d)&= d_1\cdots d_s,
\quad D_i(\bfs
d)=\binom{d_i+n}{n}-1\quad\textrm{for }1\le i\le s,\\
\bfs D(\bfs d)& =(D_1(\bfs d) \klk D_s(\bfs d)),
\quad |\bfs D(\bfs d)|=D_1(\bfs d)+\cdots+D_s(\bfs d).
\end{align*}
This notation is in agreement with that of \eqref{notationD}, where
the dependence on $\bfs d$ is not explicitly indicated, since we
were considering a fixed degree pattern.

We consider the hypersurface degree pattern $\bfs d^{(b)}=(b,1\klk
1)\in\N^s$ and $\bfs D^{(b)}=\bfs D(\bfs d^{(b)})$. We start with
the following result.
\begin{lemma}\label{lemma: highest dimension} We have
$|\bfs D^{(b)}| > |\bfs D(\bfs d)|$ for all $\bfs d\not=\bfs
d^{(b)}$ with $\delta(\bfs d)=b$.
\end{lemma}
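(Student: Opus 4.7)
The plan is to reduce to a single elementary inequality about binomial coefficients. Write $h(d)=\binom{d+n}{n}-1$, so that $|\bfs D(\bfs d)|=\sum_{i=1}^{s}h(d_i)$. The key claim is
\[
h(ab)+h(1)\ >\ h(a)+h(b) \qquad \text{for all integers } a,b\ge 2.
\tag{$\star$}
\]

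Granting $(\star)$, the lemma follows by a simple ``merging'' argument. If $\bfs d=(d_1\klk d_s)\neq\bfs d^{(b)}$ and $\delta(\bfs d)=b$, then (since $d_1\ge d_2\ge\cdots\ge d_s\ge 1$) there must be at least two indices $i<j$ with $d_i,d_j\ge 2$; otherwise $\bfs d$ would already equal $\bfs d^{(b)}$. Replace the pair $(d_i,d_j)$ by $(d_id_j,1)$ and reorder to obtain a new sorted tuple $\bfs d'$ with $\delta(\bfs d')=b$. By $(\star)$, $|\bfs D(\bfs d')|>|\bfs D(\bfs d)|$, and $\bfs d'$ has strictly fewer entries $\ge 2$ than $\bfs d$. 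Iterating this merge operation terminates at $\bfs d^{(b)}$, and $|\bfs D|$ strictly increases at each step, giving the lemma.

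To prove $(\star)$, I would use the hockey-stick identity
\[
\binom{d+n}{n}-\binom{d'+n}{n}\ =\ \sum_{k=d'+1}^{d}\binom{k+n-1}{n-1} \qquad(d\ge d'),
\]
to rewrite $(\star)$ (after cancelling $h(1)=n$ against $\binom{n+1}{n}=n+1$) as
\[
\sum_{k=b+1}^{ab}\binom{k+n-1}{n-1}\ >\ \sum_{k=2}^{a}\binom{k+n-1}{n-1}.
\]
The left sum has $b(a-1)$ terms, the right sum has $a-1$ terms. Match the first $a-1$ terms of the left sum (indices $k=b+1,\ldots,b+a-1$) with the right sum term-by-term: for $j=0,\ldots,a-2$ one compares $\binom{b+j+n}{n-1}$ with $\binom{j+n+1}{n-1}$. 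Since $b\ge 2$ and $n\ge 2$ (the paper's standing assumption $0<s<n$ forces $n\ge 2$), we have $b+j+n\ge j+n+2>j+n+1\ge n-1$, so the binomial function $\binom{\cdot}{n-1}$ is strictly increasing at these arguments, giving strict dominance in each matched pair. The remaining $(b-1)(a-1)\ge 1$ unmatched terms on the left are nonnegative extras.

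The only real obstacle is identifying the correct ``merge'' move and checking the strictness in $(\star)$; once the hockey-stick rewriting is in place, the term-by-term comparison is immediate from $n\ge 2$ and $a,b\ge 2$.
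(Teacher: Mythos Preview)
Your proof is correct and follows essentially the same approach as the paper: both reduce to the two-factor ``merge'' inequality $h(ab)+h(1)>h(a)+h(b)$ for $a,b\ge 2$ and then obtain the general case by induction/iteration. The only difference is in how the key binomial inequality is verified---the paper manipulates factorial ratios directly, whereas your hockey-stick rewriting with term-by-term comparison is a slightly cleaner variant of the same step.
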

\begin{proof}
An elementary calculation shows that for $a \geq 2$ we have
$$\frac{(2a+2)!}{(a+2)!} > 2\frac{(2a)!}{a!}.$$
It follows that
$$\frac{(2a+n)!}{(a+n)!} > 2\frac{(2a)!}{a!}$$
for $n \geq 2$, since the left--hand side is monotonically
increasing in $n$. Next, we have for $a \geq c\geq 2$ that
\begin{align}
\binom{a+n}{n} & \ge \binom{c+n}{n},\nonumber \\
\frac{(ac+n)!}{(ac)!} & > 2\frac{(a+n)!}{a!}. \label{eq: lenght s=2
- aux}
\end{align}
Dividing both sides by $n!$, we find that with $s=2$,
\begin{equation}\label{eq: lenght s=2}
|\bfs D(bc,1)|>|\bfs D(b,c)|.
\end{equation}
The general claim of the lemma follows by induction on $s$.
\end{proof}

Let $a\ge c\ge 2$ and let $\rho$ be a prime number dividing $c$.
From (\ref{eq: lenght s=2 - aux}) one deduces that $|\bfs
D(a\rho,c/\rho)|>|\bfs D(a,c)|$.
For an integer $b$, we set $g(b) = 0$ if $b$ is prime, and otherwise
\begin{equation}\label{eq: definition g(delta)}
g(b)=|\bfs D^{(b)}|-|\bfs D(b/\rho, \rho,1\klk 1)| = \binom{b+n}{n}
- \binom{b/\rho+n}{n} - \binom{\rho+n}{n},
\end{equation}
where $\rho$ is the smallest prime number dividing $b$. Extending
the binomial $u(\tau) = \binom{b/\tau+n}{n}$ to a real function of
the real variable $\tau$ on the interval $[2 \twodots b/2]$ via the
gamma function, $u$ is convex and assumes its maximum at one of the
two endpoints of the interval, namely, at $\tau = 2$; see
\cite{gat11a}, (3.6). It follows that
\begin{equation}\label{eq:boundOn g(delta)}
g(b) \ge
\binom{b+n}{n} - 2 \binom{b/2+n}{n} .
\end{equation}
We always have $g(b) \ge 1$, but $g(b)$ may be quite large. For example,
if $b^2 \geq 2 n^3$, then $g(b) \geq b^2/2 n^2$.
Furthermore, for $n>s>1$ and $b\ge 2$ composite, we have
$$|\bfs D(b/\rho,\rho,1\klk 1)|=\max\limits_{\delta(\bfs d)=
b,\,\bfs d\not=\bfs d^{(b)}}|\bfs D(\bfs d)|,$$
and
\begin{equation}\label{eq: bound second highest dimension}
|\bfs D^{(b)}|\ge |\bfs D(\bfs d)|+g(b)
\end{equation}
for any $\bfs d$ with $\delta(\bfs d)=b$ and $\bfs d\not=\bfs
d^{(b)}$.

Combining Lemma \ref{lemma: highest dimension} and (\ref{eq: bound
second highest dimension}), we can conclude that among all
$s$--tuples of homogeneous polynomials having a degree pattern $\bfs
d$ with $\delta(\bfs d)=b$, ``most'' of them define a hypersurface
within some linear projective subspace of $\PpoK^n$. More precisely,
we have the following result.
\begin{corollary}\label{coro: most systs define hypersurfaces}
Let $n\ge 2$, $1\le s<n$, and $b>0$. For any degree pattern $\bfs
d\not=\bfs d^{(b)}$ with $\delta(\bfs d)=b$,
$$\dim \PpoK^{\bfs D^{(b)}}\ge
\dim \PpoKDd+g(b).$$
\end{corollary}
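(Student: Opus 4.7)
The plan is to reduce the corollary to the numerical inequality \eqref{eq: bound second highest dimension}, which has already been established in the preceding discussion. The first step is to recall that a multiprojective space $\PpoK^{\bfs e}=\PpoK^{e_1}\times\cdots\times\PpoK^{e_s}$ has dimension equal to $e_1+\cdots+e_s=|\bfs e|$. Applying this to both $\bfs D^{(b)}$ and $\bfs D(\bfs d)$, the desired inequality
$$\dim \PpoK^{\bfs D^{(b)}}\ge \dim \PpoKDd+g(b)$$
translates into
$$|\bfs D^{(b)}|\ge |\bfs D(\bfs d)|+g(b),$$
which is exactly \eqref{eq: bound second highest dimension}.

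The second step is to dispatch the degenerate cases not literally covered by the hypothesis $n>s>1$, $b\ge 2$ composite used to derive \eqref{eq: bound second highest dimension}. If $s=1$, then the only degree pattern with $\delta(\bfs d)=b$ is $(b)=\bfs d^{(b)}$. If $b=1$, then only $\bfs d=(1,\ldots,1)=\bfs d^{(1)}$ occurs. If $b$ is prime, any $\bfs d=(d_1,\ldots,d_s)$ with $d_1\ge\cdots\ge d_s\ge 1$ and $d_1\cdots d_s=b$ forces $d_1=b$ and $d_2=\cdots=d_s=1$, so again $\bfs d=\bfs d^{(b)}$. In each such case the hypothesis $\bfs d\ne\bfs d^{(b)}$ is vacuous. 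The only remaining case is $s\ge 2$ with $b$ composite, to which \eqref{eq: bound second highest dimension} applies directly and yields the claim.

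There is no real obstacle: the corollary is simply the repackaging of the combinatorial inequality already derived via Lemma \ref{lemma: highest dimension} and the convexity argument identifying the second-largest value of $|\bfs D(\bfs d)|$ as being attained at $(b/\rho,\rho,1,\ldots,1)$ with $\rho$ the smallest prime divisor of $b$.
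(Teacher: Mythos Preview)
Your proof is correct and follows exactly the paper's own approach: identify $\dim \PpoKDd=|\bfs D(\bfs d)|$ and invoke \eqref{eq: bound second highest dimension}. Your explicit disposal of the vacuous cases $s=1$, $b=1$, and $b$ prime is a welcome clarification that the paper leaves implicit.
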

\begin{proof}
Since $\dim \PpoKDd=|\bfs D(\bfs d)|$ for any degree pattern
$\bfs d$, the corollary follows from \eqref{eq: bound second
highest dimension}.
\end{proof}

We may strengthen the conclusions of Corollary \ref{coro: most systs
define hypersurfaces} by applying Theorem \ref{th: V is
irreducible}.
\begin{corollary}\label{coro: most systs define
hypersurfaces - dimension} With assumptions as in Corollary
\ref{coro: most systs define hypersurfaces}, denote by
$\mathcal{S}_{\mathrm{irr}}^{\bfs d}$ the set of $\bfs f\in\PpoKDd$
with degree pattern $\bfs d$ such that $Z(\bfs f)\subset\PpoK^n$ is
an absolutely irreducible complete intersection of dimension $n-s$
and degree $b$. Then for any degree pattern $\bfs d\not=\bfs
d^{(b)}$ with $\delta(\bfs d)=b$, we have
$$\dim \mathcal{S}_{\mathrm{irr}}^{\bfs d^{(b)}}\ge
\dim \mathcal{S}_{\mathrm{irr}}^{\bfs d}+g(b).$$
\end{corollary}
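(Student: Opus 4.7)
The plan is to upgrade Corollary \ref{coro: most systs define hypersurfaces} from the comparison of the ambient dimensions $\dim\PpoKDd$ to the comparison of the dimensions of the loci $\mathcal{S}_{\mathrm{irr}}^{\bfs d}$, by establishing that each $\mathcal{S}_{\mathrm{irr}}^{\bfs d}$ is a dense subset of its ambient multiprojective space $\PpoKDd$. Once this is in place, the claim reduces to the inequality \eqref{eq: bound second highest dimension} that was used in the proof of Corollary \ref{coro: most systs define hypersurfaces}.

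For the density step, I would apply Theorem \ref{th: V is irreducible} to the degree pattern $\bfs d$ to obtain a nonzero multihomogeneous polynomial $P_{\mathrm{irr}}\in K[\mathrm{coeffs}(\bfs F)]$ with the property that $P_{\mathrm{irr}}(\bfs f)\ne 0$ forces $Z(\bfs f)$ to be an absolutely irreducible complete intersection of dimension $n-s$ and degree $\delta(\bfs d)=b$. Consequently, $\mathcal{S}_{\mathrm{irr}}^{\bfs d}$ contains the nonempty Zariski open complement $\PpoKDd\setminus Z(P_{\mathrm{irr}})$. Since $\PpoKDd=\PpoK^{D_1(\bfs d)}\times\cdots\times\PpoK^{D_s(\bfs d)}$ is a product of projective spaces, it is irreducible of dimension $|\bfs D(\bfs d)|$, so every nonempty Zariski open subset of it is dense and of the same dimension. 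Combined with the trivial inclusion $\mathcal{S}_{\mathrm{irr}}^{\bfs d}\subseteq\PpoKDd$, this gives
\[
\dim \mathcal{S}_{\mathrm{irr}}^{\bfs d}=|\bfs D(\bfs d)|.
\]

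Applying this equality to both patterns $\bfs d^{(b)}$ and $\bfs d$ and invoking $|\bfs D^{(b)}|\ge |\bfs D(\bfs d)|+g(b)$ from \eqref{eq: bound second highest dimension} finishes the argument. No substantive obstacle arises: the nonvanishing of $P_{\mathrm{irr}}$ is already built into Theorem \ref{th: V is irreducible}, so the present corollary is essentially just the ambient-dimension statement of Corollary \ref{coro: most systs define hypersurfaces} transported to $\mathcal{S}_{\mathrm{irr}}^{\bfs d}$ via this density observation.
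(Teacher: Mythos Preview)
Your proposal is correct and follows essentially the same approach as the paper: invoke Theorem~\ref{th: V is irreducible} to see that $\mathcal{S}_{\mathrm{irr}}^{\bfs d}$ contains the complement of a hypersurface in $\PpoKDd$, deduce $\dim \mathcal{S}_{\mathrm{irr}}^{\bfs d}=|\bfs D(\bfs d)|$, and then apply the ambient dimension comparison from Corollary~\ref{coro: most systs define hypersurfaces} (equivalently \eqref{eq: bound second highest dimension}). Your write-up is slightly more explicit about why the open complement has full dimension, but the argument is the same.
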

\begin{proof}
Let $\bfs d$ be a degree pattern with $\delta(\bfs d)=b$. According
to Theorem \ref{th: V is irreducible}, there exists a hypersurface
$\mathcal{H}_{\mathrm{irr}}^{\bfs d}\subset\PpoKDd$ such that
$Z(\bfs f)\subset\PpoK^n$ is an absolutely irreducible complete
intersection of dimension $n-s$ and degree $b$ for any $\bfs f\in
\PpoKDd\setminus \mathcal{H}_{\mathrm{irr}}$. This implies that
$$\dim \mathcal{S}_{\mathrm{irr}}^{\bfs d}=\dim \PpoKDd=|\bfs D(\bfs d)|.$$
The conclusion now follows from Corollary \ref{coro: most systs
define hypersurfaces}.
\end{proof}

%
%
\subsection{Systems defined over a finite field}
In this section we obtain a quantitative version of Corollary
\ref{coro: most systs define hypersurfaces - dimension} for the set
of $s$--tuples of homogeneous polynomials with coefficients in $\fq$
having any degree pattern $\bfs d$ with $\delta(\bfs d)=b$. For the
case $s=1$, \cite{GaViZi13}, Corollary 6.8, shows that the number
$N_{\mathrm{irr}}^1$ of homogeneous polynomials $f_1\in \fq[X_0\klk
X_n]$ of degree $b$ which are absolutely irreducible satisfies the
following estimate:
$$
\biggl|N_{\mathrm{irr}}^1-\frac{q^{\binom{b+n}{n}}-
q^{\binom{b+n-1}{n}}}{q-1}\biggr|\le 4\,
q^{\binom{b+n-1}{n}+n-1}\frac{1- q^{-n}}{(1-q^{-1})^2},$$
where the 4 can be replaced by 3 for $n\ge 3$.

We denote as
$M_s(b)$
 the number of $\bfs d$ as in (\ref{notationD})
with $\delta(\bfs d) = b$, which equals the number of nontrivial unordered
factorizations of $b$ with at most $s$ factors, and first estimate this quantity.
 \begin{lemma}
 \label{numberFactorizations}
For positive integers $b \geq 2$ and $s$, we have $ M_s(b) \leq b^{\log_2 \log_2 b}$.
 \end{lemma}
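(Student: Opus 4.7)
The plan is to bound $M_s(b)$ by a Bell number and then invoke an explicit estimate on Bell numbers. First, I would observe that any tuple $\bfs d$ counted by $M_s(b)$ has the form $(d_1,\ldots,d_r,1,\ldots,1)$ with $d_1\ge\cdots\ge d_r\ge 2$ and $d_1\cdots d_r=b$; the trailing ones carry no information, so $M_s(b)$ is at most the multiplicative partition number $P(b)$ counting unordered factorizations of $b$ into any number of factors each at least $2$. This step drops the parameter $s$ from the argument entirely.

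Next I would translate $P(b)$ into set-partition language. Writing $b=p_1^{a_1}\cdots p_r^{a_r}$ and setting $\Omega(b)=a_1+\cdots+a_r$, an unordered factorization of $b$ into parts $\ge 2$ corresponds to a partition of the multiset consisting of $a_j$ copies of $p_j$, $1\le j\le r$, into nonempty submultisets, one per factor. Distinguishing the instances of each prime lifts every such multiset partition to at least one set partition of an $\Omega(b)$-element set, so $P(b)\le B_{\Omega(b)}$, where $B_n$ is the $n$th Bell number. Since each prime is at least $2$, we also have $\Omega(b)\le \log_2 b$.

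Finally, I would apply the Berend--Tassa bound \cite{bertas10}, which states $B_n<(0.792\,n/\ln(n+1))^n$ for every $n\ge 1$. When $\Omega(b)\ge 2$ the denominator satisfies $\ln(n+1)\ge \ln 3>0.792$, giving $B_n<n^n$; monotonicity of $x\mapsto x^x$ on $[1,\infty)$ together with $\Omega(b)\le \log_2 b$ then yields
\[
M_s(b)\le B_{\Omega(b)}<(\log_2 b)^{\log_2 b}=b^{\log_2\log_2 b}.
\]
When $\Omega(b)=1$, that is when $b$ is prime, one has $M_s(b)=1$, and the right-hand side is at least $1$ since $b\ge 2$ forces $\log_2\log_2 b\ge 0$, so the bound holds trivially.

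The main obstacle I foresee is the tightness of the Berend--Tassa estimate: the constant $0.792$ leaves very little slack, so the simplification $0.792/\ln(n+1)\le 1$ that I need in order to replace $(0.792 n/\ln(n+1))^n$ by $n^n$ just barely fails at $n=1$. This is why the split into the two cases $\Omega(b)=1$ and $\Omega(b)\ge 2$ is essential, with the prime case handled by a direct trivial check. Everything else is a multiplicative repackaging combined with monotonicity of $x\mapsto x^x$.
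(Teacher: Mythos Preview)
Your proof is correct and follows essentially the same approach as the paper: bound $M_s(b)$ by the Bell number $B_{\Omega(b)}$, invoke the Berend--Tassa estimate to get $B_n<n^n$, and finish via $\Omega(b)\le\log_2 b$. The only cosmetic difference is that the paper reaches $B_{\Omega(b)}$ by a prime-swapping argument reducing to a squarefree integer with the same $\Omega$, whereas you go there directly by observing that every multiset partition of the prime factors lifts to at least one set partition of a labeled $\Omega(b)$-element set; your route is slightly more economical but the content is the same.
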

\begin{proof}
We consider \emph{unordered factorizations} $F$ of $b \in \N$
with $s$ factors. Such an $F$ is a multiset of $s$ positive integers whose product
(with multiplicities) equals $b$. The number $1$ is allowed as a factor.
Formally, we have $F\colon \N_{\geq 1} \rightarrow \N$ with $\prod_{a \in \N_{\geq 1}} a^{F(a)} = b$
and $\sum_{a \in \N_{\geq 1}} F(a) = s$. Then $a$ ``occurs $F(a)$ times" in $F$,
and $a$ ``occurs'' in $F$ if $F(a) \geq 1$.

Picking primes $p$ and $q$ with $p \mid b$ and $q \nmid b$, we take
for any $F$ some $a$ occurring in $F$ with $p \mid a$ and replace
one copy of $a$ by $aq/p$. This new multiset $F'$ is a factorization
of $bq/p$. Replacing the unique occurrence of a multiple of $q$ in
any factorization of $bq/p$ by the same multiple of $p$ gives a
factorization of $b$. When applied to $F'$, it yields the original
$F$. Thus the map $F \mapsto F'$ is injective and $ M_s(b) \leq
M_s(bq/p)$.

Let $m = \Omega(b)$ be the number of prime factors of $b$, counted
with multiplicities, and $c$ any squarefree integer with
$m=\Omega(c)$ prime factors. The above shows that  $ M_s(b) \leq
M_s(c)$. A factorization of $c$ corresponds to a partition of $\{ 1,
\ldots, m\}$ into $t \leq s$ disjoint nonempty subsets, together
with $s-t$ times the empty set (meaning $F(1) = s-t$ in the above
notation). We drop the restriction $t \leq s$ and consider all
partitions of $\{ 1, \ldots, m\}$ into nonempty subsets. The number
of such partitions is the $m$th Bell number $B_m$. Since $M_s(2)=1$,
we may assume that $m>2$.
By \cite{bertas10},
we have
$$
\log_2 B_m \leq m \cdot \log_2 (0.8 m / \ln m) < m \log_2 m.
$$
Since $m = \Omega(b)$, we have $2^m \leq b$. It follows that
$$
M_s(b) \leq M_s(c) \leq B_m < 2^{m \log_2 m} \leq b^{\log_2 \log_2 b}.
$$
\end{proof}

Combining Lemma \ref{lemma: highest dimension} and (\ref{eq: bound
second highest dimension}) we obtain an estimate on the number of
polynomials systems as above defining a complete intersection
which is a hypersurface in some linear subspace.
As a special case of (\ref{allSequences}), the number of
$s$--tuples $\bfs f=(f_1\klk f_s)$ of homogeneous polynomials of
$\fq[X_0\klk X_n]$ with degree pattern $(b,1,\dots,1)$, up to
multiples in $\fq$ of any $f_i$, is equal to
$$\#\Pp^{\bfs D^{(b)}}(\fq)=\#\Pp^{D_b}(\fq)\cdot\big(\#
\Pp^n(\fq)\big)^{s-1}= p_{D_b}p_n^{s-1},$$
where $D_b =\binom{b+n}{n}-1$. The estimates in the following will be
expressed as a deviation from this value.
For any $\bfs d\not= \bfs d^{(b)}$ with $\delta(\bfs d)=b$,
(\ref{eq: bound second highest dimension}) implies that
\begin{equation}
\label{hypersurfacePolys}
p_{\bfs D(\bfs d)} \le \frac {p_{D_b} p_n^{s-1}} {q^{g(b)}}.
\end{equation}
\begin{theorem}\label{th: number of hypersurfaces} Let $N_{\mathrm{hyp}}$
denote the number of $\bfs f\in\Pp^{\bfs D(\bfs d)}(\fq)$ defining a
complete intersection $Z(\bfs f)\subset\PpF^n$ of dimension $n-s$
and degree $b$, which is a hypersurface in some
linear projective subspace of $\PpF^n$ for some $\bfs d$ as in (\ref{notationD})
with $\delta(\bfs d)=b$.
Then
$$\Bigl | \frac {N_{\mathrm{hyp}}} {p_{D_b}p_n^{s-1}} -1 \Bigr | \le \frac{1+9q^{-1}}{q^{n-s+3}}+
\frac{M_s(b)}{q^{g(b)}}
 \le \frac{1+9q^{-1}}{q^{n-s+3}}+
\frac{ b^{\log_2 \log_2 b}}{q^{g(b)}} .$$
\end{theorem}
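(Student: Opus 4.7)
The plan is to split $N_{\mathrm{hyp}} = N^{(b)} + N^{\mathrm{rest}}$ according to the degree pattern, where $N^{(b)}$ counts the tuples with the hypersurface pattern $\bfs d^{(b)}=(b,1,\ldots,1)$ and $N^{\mathrm{rest}}$ collects those with $\bfs d\ne \bfs d^{(b)}$ and $\delta(\bfs d)=b$. The main work is to show that $N^{(b)}$ is very close to $p_{D_b}\,p_n^{s-1}$ (producing the leading error $(1+9q^{-1})/q^{n-s+3}$) and that $N^{\mathrm{rest}}$ is small (producing $M_s(b)/q^{g(b)}$).

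For the main term, I will use the characterization: a tuple $\bfs f = (f_1, \ell_2, \ldots, \ell_s) \in \Pp^{\bfs D^{(b)}}(\fq)$ with $\deg f_1 = b$ and each $\ell_i$ linear defines a complete intersection of dimension $n-s$ and degree $b$ that is a hypersurface in a linear projective subspace if and only if $\ell_2, \ldots, \ell_s$ are linearly independent (so $L := Z(\ell_2,\ldots,\ell_s)$ is a linear subspace of dimension $n-s+1$) and the restriction $f_1|_L$ is a nonzero squarefree polynomial on $L$. A standard flag count yields $I_{s-1} := \prod_{k=0}^{s-2}(p_n - p_{k-1})$ linearly independent $(s-1)$-tuples, and the restriction map $\fq[X_0,\ldots,X_n]_b \to \fq[L]_b$ has kernel of dimension $K_b := \binom{b+n}{n} - \binom{b+n-s+1}{n-s+1}$. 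This gives the decomposition
$$
p_{D_b}\,p_n^{s-1} - N^{(b)} \;\le\; p_{D_b}(p_n^{s-1} - I_{s-1}) \;+\; p_n^{s-1}\,(p_{K_b-1} + E_{\mathrm{sq}}),
$$
where $E_{\mathrm{sq}}$ counts those $f_1$ with $f_1|_L$ nonzero but not squarefree.

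To estimate the leading piece I apply $1 - \prod(1-x_i) \le \sum x_i$ with $x_k = p_{k-1}/p_n$, combined with $p_k/p_n \le q^{-(n-k)}$, and sum the resulting geometric series to get $(p_n^{s-1} - I_{s-1})/p_n^{s-1} \le (1+1/(q-1))/q^{n-s+3}$. The two remaining contributions are subleading: $p_{K_b-1}/p_{D_b} \le 2\,q^{-\binom{b+n-s+1}{n-s+1}}$ by a direct kernel-size estimate, and $E_{\mathrm{sq}}/p_{D_b}$ is controlled by summing over possible factorizations $f_1|_L = g^2 h$, giving a fraction of order at most $q^{2-2b}$ in the tightest regime $n-s=1$ (and strictly smaller for larger $n-s$). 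Both extra contributions absorb into the slack $(9/q - 1/(q-1))/q^{n-s+3}$ afforded by the stated constant, yielding $1 - N^{(b)}/(p_{D_b}\,p_n^{s-1}) \le (1+9q^{-1})/q^{n-s+3}$. For $b$ prime the estimate is trivial since then $M_s(b)/q^{g(b)}=1$ already dominates the left-hand side.

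For the contribution from the other degree patterns, combining the trivial bound $N_{\mathrm{hyp}}^{\bfs d} \le p_{\bfs D(\bfs d)}$ with inequality (\ref{hypersurfacePolys}) gives $N_{\mathrm{hyp}}^{\bfs d} \le p_{D_b}\,p_n^{s-1}/q^{g(b)}$ for each $\bfs d \ne \bfs d^{(b)}$ with $\delta(\bfs d)=b$; summing over the at most $M_s(b)$ such patterns yields $N^{\mathrm{rest}} \le M_s(b)\,p_{D_b}\,p_n^{s-1}/q^{g(b)}$. Since trivially $N^{(b)} \le p_{D_b}\,p_n^{s-1}$, this supplies the upward deviation, and combining with the downward bound from the main term gives the two-sided estimate. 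The final inequality follows immediately from Lemma \ref{numberFactorizations}. The main technical obstacle is confirming the precise leading constant $(1+9q^{-1})$, which requires balancing three independent error sources (linear dependence, vanishing on $L$, and square factors) against the slack in the target bound.
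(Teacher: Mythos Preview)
Your approach is essentially the same as the paper's: split $N_{\mathrm{hyp}}$ by degree pattern, bound the contribution from $\bfs d\neq\bfs d^{(b)}$ trivially via \eqref{hypersurfacePolys} and Lemma~\ref{numberFactorizations}, and for the hypersurface pattern $\bfs d^{(b)}$ reduce to counting linearly independent $(s-1)$-tuples of linear forms times squarefree degree-$b$ forms on the resulting linear space. The paper organizes the main term as an exact product $N_{\mathrm{ci}}^{\bfs d^{(b)}}=\#S_{n-s+1,b}(\fq)\cdot q^{K_b}\cdot N_{\mathrm{ind}}$ and then invokes \cite{GaViZi13}, Corollary~5.7, for a two-sided bound on $\#S_{n-s+1,b}(\fq)$, whereas you decompose the complement into three error terms and estimate the squarefree defect directly by summing over $g^2h$ factorizations; both routes lead to the same numerics, and your observation that the bound is vacuous for prime $b$ (since then $g(b)=0$ and $M_s(b)=1$) neatly disposes of the small-$b$ cases where the squarefree error would otherwise not fit into the slack.
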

\begin{proof}
Let $\bfs d=(d_1\klk d_s)\in\N^s$ be a degree pattern with $d_1\ge
d_2\ge\cdots \ge d_s\ge 1$, $\delta(\bfs d)=b$ and $\bfs d\not=\bfs
d^{(b)}$. Denote by $N_{\mathrm{ci}}^{\bfs d}$ the number of $\bfs
f\in\Pp^{\bfs D(\bfs d)}(\fq)$ defining a complete intersection of
$\PpF^n$ of dimension $n-s$ and degree $b=\delta(\bfs b)$. We have
the obvious upper bound
$$N_{\mathrm{ci}}^{\bfs d}\le p_{\bfs D(\bfs d)}=\prod_{1 \leq i \leq s} p_{D_i(\bfs d)}.$$
Any complete intersection with degree pattern $\bfs d^{(b)}$ is a
hypersurface in some linear projective subspace. Therefore,
$$\left|N_{\mathrm{hyp}}-p_{D_b}p_n^{s-1}\right|\le \big|N_{\mathrm{ci}}^{\bfs
d^{(b)}}-p_{D_b}p_n^{s-1}\big|+ \mathop{\sum_{\delta(\bfs
d)=b}}_{\bfs d\not= \bfs d^{(b)}}p_{\bfs D(\bfs d)}.
$$

The sum leads to the second summands in the bounds of the theorem
via (\ref{hypersurfacePolys}). We now consider the first term on the
right--hand side of the inequality. Let $\bfs f=(f_1\klk f_s)$ be an
$s$--tuple of polynomials of $\fq[X_0\klk X_n]$ with degree pattern
$\bfs d^{(b)}$. Without loss of generality we may assume that
$f_2=X_0\klk f_s=X_{s-2}$. Then $Z(\bfs f)\subset\PpF^n$ is a
complete intersection of dimension $n-s$ and degree $b$ if and only
if $g_1=f_1(0\klk 0,X_{s-1}\klk X_n)$ is a squarefree polynomial of
$\fq[X_{s-1}\klk X_n]$.

We fix a monomial order of $\fq[X_{s-1}\klk X_n]$ and normalize
$g_1$ by requiring that its leading coefficient with respect to this
order be equal to 1. Denote by $S_{n-s+1,b}(\fq)$ the set of
normalized squarefree homogeneous polynomials of $\fq[X_{s-1}\klk
X_n]$ of degree $b$. Then
\begin{align*}
\#S_{n-s+1,b}(\fq)\cdot q^{\binom{b+n}{n}-\binom{b+n-s+1}{n-s+1}}
=\#\{Z(\bfs f)\subset\PpF^n\mbox{ complete intersections}\
\\\mbox{ of degree }b \colon f_2=X_0\klk f_s=X_{s-2}\}.
\end{align*}

We make the previous argument for any $(f_2\klk
f_s)\in(\PpF^n(\fq))^{s-1}$ with $f_2\klk f_s$ linearly independent,
that is, for any $(s-1)$--tuple $(f_2\klk f_s)$ of homogeneous
polynomials of $\fq[X_0\klk X_n]$ with degree pattern $(1\klk 1)$
such that $f_2\klk f_s$ are linearly independent, up to multiples in
$\fq$ of any $f_i$. If $N_{\mathrm{ind}}$ is the number of elements
$(f_2\klk f_s)\in(\PpF^n(\fq))^{s-1}$ with $f_2\klk f_s$ linearly
independent, then
\begin{align}
\label{linInd} N_{\mathrm{ci}}^{\bfs d^{(b)}} & =
\#S_{n-s+1,b}(\fq)\cdot
q^{\binom{b+n}{n}-\binom{b+n-s+1}{n-s+1}} \cdot N_{\mathrm{ind}},\\
N_{\mathrm{ind}}&=\prod_{0 \leq k \leq s-2}\frac{q^{n+1}-q^k}{q-1}=p_n^{s-1}
\prod_{1 \leq k \leq s-2}\frac{q^{n+1}-q^k}{q^{n+1}-1} \le p_n^{s-1}.
\end{align}

According to \cite{GaViZi13}, Corollary 5.7, we have
$$\left|\#S_{n-s+1,b}(\fq)-
\frac{q^{\binom{b+n-s+1}{n-s+1}}}{q-1}\right|\le
\frac{3\,q^{\binom{b+n-s-1}{n-s+1}+n-s}}{(1-q^{-1})^2}.$$
Therefore, we find that
\begin{align*}
\bigg|N_{\mathrm{ci}}^{\bfs d^{(b)}}-
\frac{q^{\binom{b+n}{n}}}{q-1}N_{\mathrm{ind}}\bigg|&\le
\frac{3\,q^{\binom{b+n}{n}-\binom{b+n-s+1}{n-s+1}
+\binom{b+n-s-1}{n-s+1}+n-s}}{(1-q^{-1})^2}N_{\mathrm{ind}}\\
&\le \frac{3\,q^{\binom{b+n}{n}-\frac{b^2+n-s}{b+n-s}
\,\binom{b+n-s}{n-s}\,+n-s}}{(1-q^{-1})^2}\,p_n^{s-1}.
\end{align*}
Observe that, for $b\ge 2$ and $n-s\ge 2$,
\begin{equation}\label{eq: ineq aux}
\frac{b^2+n-s}{b+n-s}\,\binom{b+n-s}{n-s}\,-n+s\ge n-s+5.
\end{equation}
Indeed, as the left--hand side is monotonically increasing in $b$,
it suffices to consider $b=2$. For $b=2$ and $n-s\ge 2$, an
elementary calculation shows that (\ref{eq: ineq aux}) is satisfied.
As a consequence, we obtain
$$\bigg|N_{\mathrm{ci}}^{\bfs d^{(b)}}-
\frac{q^{\binom{b+n}{n}}}{q-1}N_{\mathrm{ind}}\bigg|\le
\frac{3\,q^{\binom{b+n}{n}-n+s-5}}{(1-q^{-1})^2}p_n^{s-1} \le
\frac{13}{2}\,\frac{p_{D_b}p_n^{s-1}}{q^{n-s+4}}.$$

%
%
%

%
In order to get rid of the term $N_{\mathrm{ind}}$, we use
$$\frac{q^{n+1}-q^k}{q^{n+1}-1}\ge 1-\frac{1}{q^{n+1-k}}$$
for $1\le k\le n+1$, and thus
$$1\ge \prod_{1 \leq k \leq s-2}\frac{q^{n+1}-q^k}{q^{n+1}-1}\ge
\prod_{1 \leq k \leq s-2} \bigl(1-\frac{1}{q^{n+1-k}}\bigr)\ge
1-\frac{1+2q^{-1}}{q^{n-s+3}},$$
%
It follows that
\begin{equation}\label{eq: bounds for N_ind}
p_n^{s-1}-q^{-n+s-3}(1+2q^{-1})p_n^{s-1}\le N_{\mathrm{ind}}\le p_n^{s-1}.
\end{equation}

We deduce that
\begin{align*}
\big|N_{\mathrm{ci}}^{\bfs d^{(b)}}-p_{D_b}p_n^{s-1} \big|
&\le\bigg|\frac{q^{\binom{b+n}{n}}}{q-1}
N_{_{\mathrm{ind}}}-p_{D_b}p_n^{s-1} \bigg|+
\frac{13}{2}\,\frac{p_{D_b}p_n^{s-1}}{q^{n-s+4}}\\
&\le \frac{1+2q^{-1}}{q^{n-s+3}}p_{D_b}p_n^{s-1}+
\frac{p_n^{s-1}}{q-1}+
\frac{13}{2}\,\frac{p_{D_b}p_n^{s-1}}{q^{n-s+4}}.
\end{align*}
The statement of the theorem readily follows.
\end{proof}

The error term in Theorem \ref{th: number of hypersurfaces}
decreases with growing $q$.

Next we estimate the number of polynomial systems as above defining
an absolutely irreducible complete intersection. In view of Lemma
\ref{lemma: highest dimension} and Theorem \ref{th: number of
hypersurfaces}, we have to pay particular attention to the degree
pattern $\bfs d^{(b)}$, which is the subject of the next result.
\begin{lemma}\label{lemma: estim abs irred hypersurfaces}
Let $N_{\mathrm{irr}}^{\bfs d^{(b)}}$ be the number of $\bfs
f\in\Pp^{\bfs D^{(b)}}(\fq)$ defining an absolutely irreducible
complete intersection $Z(\bfs f)\subset\PpF^n$ of dimension $n-s$
and degree $b$  which is a hypersurface in some linear projective
subspace of $\PpF^n$. Then
$$\Bigl | \frac {N_{\mathrm{irr}}^{\bfs d^{(b)}}} {p_{D_b}p_n^{s-1}} -1 \Bigr|
\le\frac{1+14\,q^{-1}}{q^{n-s+3}}. $$
%
for $b>2$ or $n-s>3$. For $b=2$ and $n-s\le 3$, the statement holds
with $1+14\,q^{-1}$ replaced by $14\,q^2$.
\end{lemma}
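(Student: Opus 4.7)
The plan is to follow the structure of the proof of Theorem \ref{th: number of hypersurfaces}, with the count of normalized squarefree polynomials replaced by the count of normalized absolutely irreducible polynomials, and with \cite{GaViZi13}, Corollary 6.8, used in place of Corollary 5.7. The key observation is that $\bfs f\in\Pp^{\bfs D^{(b)}}(\fq)$ defines an absolutely irreducible complete intersection of degree $b$ and dimension $n-s$ if and only if, after changing coordinates so that $f_i=X_{i-2}$ for $2\le i\le s$, the restriction $g_1=f_1(0,\ldots,0,X_{s-1},\ldots,X_n)$ is an absolutely irreducible homogeneous polynomial of degree $b$ in the $n-s+2$ variables $X_{s-1},\ldots,X_n$. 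Denoting by $I_{n-s+1,b}(\fq)$ the number of such normalized $g_1$'s and reproducing the counting argument leading to (\ref{linInd}), one obtains
$$
N_{\mathrm{irr}}^{\bfs d^{(b)}}=I_{n-s+1,b}(\fq)\cdot q^{\binom{b+n}{n}-\binom{b+n-s+1}{n-s+1}}\cdot N_{\mathrm{ind}}.
$$

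Next, \cite{GaViZi13}, Corollary 6.8, applied with $n$ replaced by $n-s+1$ gives
$$
\biggl|I_{n-s+1,b}(\fq)-\frac{q^{\binom{b+n-s+1}{n-s+1}}-q^{\binom{b+n-s}{n-s+1}}}{q-1}\biggr|\le\frac{C\,q^{\binom{b+n-s}{n-s+1}+n-s}(1-q^{-(n-s+1)})}{(1-q^{-1})^2},
$$
with $C=3$ if $n-s\ge 2$ and $C=4$ otherwise. Multiplying by $q^{\binom{b+n}{n}-\binom{b+n-s+1}{n-s+1}}$, applying the Pascal identity $\binom{b+n-s+1}{n-s+1}-\binom{b+n-s}{n-s+1}=\binom{b+n-s}{n-s}$, and comparing with $p_{D_b}=(q^{\binom{b+n}{n}}-1)/(q-1)$ yields an approximation of the product $I_{n-s+1,b}(\fq)\cdot q^{\binom{b+n}{n}-\binom{b+n-s+1}{n-s+1}}$ by $p_{D_b}$ with explicit error term. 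From the decomposition
$$
N_{\mathrm{irr}}^{\bfs d^{(b)}}-p_{D_b}p_n^{s-1}=\bigl(I_{n-s+1,b}(\fq)\cdot q^{\binom{b+n}{n}-\binom{b+n-s+1}{n-s+1}}-p_{D_b}\bigr)N_{\mathrm{ind}}+p_{D_b}(N_{\mathrm{ind}}-p_n^{s-1}),
$$
combined with the bounds (\ref{eq: bounds for N_ind}) and divided by $p_{D_b}p_n^{s-1}$, the second summand contributes $(1+2q^{-1})/q^{n-s+3}$ to the relative error, while the first summand contributes a main-term correction of relative order $q^{-\binom{b+n-s}{n-s}}$ and a GaViZi-type error of relative order $q^{n-s+1-\binom{b+n-s}{n-s}}$.

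The main obstacle lies in the case analysis needed to collapse these contributions into the stated bound. Under the hypothesis $b>2$ or $n-s>3$, an elementary combinatorial inequality analogous to (\ref{eq: ineq aux}) shows that $\binom{b+n-s}{n-s}$ is large enough so that both contributions from the first summand can be absorbed into $(1+14q^{-1})/q^{n-s+3}$ after consolidating constants. In the exceptional cases $b=2$ with $n-s\le 3$, the binomial $\binom{b+n-s}{n-s}$ is too small for this absorption, and a direct coarse estimate of the same terms only yields the weaker bound with $1+14q^{-1}$ replaced by $14q^2$.
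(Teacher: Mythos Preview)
Your proposal is correct and follows essentially the same approach as the paper's own proof: reduce to counting absolutely irreducible homogeneous polynomials in $n-s+2$ variables via the formula $N_{\mathrm{irr}}^{\bfs d^{(b)}}=\#A_{n-s+1,b}(\fq)\cdot q^{\binom{b+n}{n}-\binom{b+n-s+1}{n-s+1}}\cdot N_{\mathrm{ind}}$, invoke \cite{GaViZi13}, Corollary 6.8, use the bounds \eqref{eq: bounds for N_ind} on $N_{\mathrm{ind}}$, and finish with the combinatorial inequality $\binom{b+n-s}{n-s}-(n-s+1)\ge n-s+4$ valid for $b>2$ or $n-s>3$. The only cosmetic differences are that you quote the full two-term form of the GaViZi bound (the paper absorbs the subtracted term into the error immediately) and that you write the error as an exact algebraic identity rather than a triangle-inequality split; both lead to the same estimates.
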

\begin{proof}
Let $\bfs f=(f_1\klk f_s)$ be an $s$--tuple of homogeneous
polynomial of $\fq[X_0\klk X_n]$ with degree pattern $\bfs d^{(b)}$.
Without loss of generality, we may assume that $f_2=X_0\klk
f_s=X_{s-2}$. Thus $Z(\bfs f)\subset\PpF^n$ is absolutely
irreducible if and only if $g_1=f_1(0\klk 0,X_{s-1}\klk X_n)$ is an
absolutely irreducible polynomial of $\fq[X_{s-1}\klk X_n]$. We
normalize $g_1$ by requiring that its leading coefficient with
respect to a given monomial order of $\fq[X_{s-1}\klk X_n]$ is equal
to 1. Denote by $A_{n-s+1,b}(\fq)$ the set of normalized absolutely
irreducible polynomials of $\fq[X_{s-1}\klk X_n]$ of degree $b$. We
have
\begin{align*}
\#A_{n-s+1,b}(\fq)\,q^{\binom{b+n}{n}-\binom{b+n-s+1}{n-s+1}}
=\#\{ & Z(\bfs f)\subset\PpF^n \mbox{ absolutely
irreducible} \colon \\
&f_2=X_0\klk f_s=X_{s-2}\}.
\end{align*}
Now we let $(f_2\klk f_s)$ run through all the $(s-1)$--tuples of
homogeneous polynomials of $\fq[X_0\klk X_n]$ of degree pattern
$(1\klk 1)$ with $f_2\klk f_s$ linearly independent, up to multiples
in $\fq$ of any $f_i$. If $N_{\mathrm{ind}}$ denotes the number of
elements $(f_2\klk f_s)\in(\PpF^n(\fq))^{s-1}$ with $f_2\klk f_s$
linearly independent, then
$$N_{\mathrm{irr}}^{\bfs d^{(b)}}=\#A_{n-s+1,b}(\fq)
\,q^{\binom{b+n}{n}-\binom{b+n-s+1}{n-s+1}} N_{\mathrm{ind}}.$$

From \cite{GaViZi13}, Corollary 6.8, we have that
$$\Bigl | \#A_{n-s+1,b}(\fq)-
\frac{q^{\binom{b+n-s+1}{n-s+1}}}{q-1} \Bigr | \le
\frac{3\,q^{\binom{b+n-s}{n-s+1}+n-s}}{(1-q^{-1})^2}.$$
Further, by \eqref{eq: bounds for N_ind} we have
$|N_{\mathrm{ind}}-p_n^{s-1}|\le q^{-n+s-3}(1+2q^{-1})p_n^{s-1}$.
As a consequence,
\begin{align*}
\left|N_{\mathrm{irr}}^{\bfs d^{(b)}}-p_{D_b}p_n^{s-1}
\right|\le&\left|N_{\mathrm{irr}}^{\bfs d^{(b)}}-\#A_{n-s+1,b}(\fq)
\,q^{\binom{b+n}{n}-\binom{b+n-s+1}{n-s+1}}p_n^{s-1} \right|\\
&+ \left|\#A_{n-s+1,b}(\fq)
\,q^{\binom{b+n}{n}-\binom{b+n-s+1}{n-s+1}}p_n^{s-1}
-p_{D_b}p_n^{s-1}\right|\\
\le&\,p_{D_b}\left | N_{\mathrm{ind}}-p_n^{s-1} \right | \\&+
p_{D_b}p_n^{s-1}\frac{q-1}{q^{\binom{b+n-s+1}{n-s+1}}} \Bigl | \#A_{n-s+1,b}(\fq)
-\frac{q^{\binom{b+n-s+1}{n-s+1}}}{q-1} \Bigr | \\
\le&\Big(\frac{1+2q^{-1}}{q^{n-s+3}}+
12\,q^{-\binom{b+n-s}{n-s}+n-s+1}\Big) \, p_{D_b}p_n^{s-1}.
\end{align*}
Finally, taking into account that
$$-\binom{b+n-s}{n-s}+n-s+1\le -n+s-4$$
for $b>2$ or $n-s>3$, the statement of the lemma readily follows.
\end{proof}

Now we are ready to estimate the number of polynomials systems as
above defining an absolutely irreducible projective subvariety of
$\PpF^n$ of dimension $n-s$ and degree $b$ defined over $\fq$.
We recall $g(b)$ from (\ref{eq: definition g(delta)}).
\begin{theorem}\label{th: number abs irred hypersurfaces}
Let $N_{\mathrm{irr}}^b$ be the number of $\bfs f\in\Pp^{\bfs D(\bfs
d)}(\fq)$ defining an absolutely irreducible complete intersection
$Z(\bfs f)\subset\PpF^n$ of dimension $n-s$ and degree
$b$  which is a hypersurface in some
linear projective subspace of $\PpF^n$,
for any $\bfs d$ with $\delta(\bfs d)=b$.
Then
$$\Bigl | \frac {N_{\mathrm{irr}}^b} {p_{D_b}p_n^{s-1}} -1 \Bigr|\le
\frac{1+14\,q^{-1}}{q^{n-s+3}}+
\frac{b^{\log_2 \log_2 b}}{q^{g(b)}} .$$
%
if $b>2$ or $n-s>3$.  For $b=2$
and $n-s\le 3$, the statement holds with $1+14\,q^{-1}$ replaced by
$14\,q^2$.
\end{theorem}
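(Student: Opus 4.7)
The plan is to follow the structure of Theorem \ref{th: number of hypersurfaces}, replacing the count of complete intersections by the sharper count of absolutely irreducible ones provided by Lemma \ref{lemma: estim abs irred hypersurfaces}. The dominant degree pattern is $\bfs d^{(b)} = (b,1\klk 1)$, and all other patterns with the same B\'ezout number contribute only lower-order terms thanks to the dimension gap recorded in Lemma \ref{lemma: highest dimension} and quantified by (\ref{eq: bound second highest dimension}).

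Concretely, writing $N_{\mathrm{irr}}^{\bfs d}$ for the number of $\bfs f \in \Pp^{\bfs D(\bfs d)}(\fq)$ defining an absolutely irreducible complete intersection of dimension $n-s$ and degree $b$, the quantity $N_{\mathrm{irr}}^b$ decomposes as the sum of $N_{\mathrm{irr}}^{\bfs d}$ over all patterns $\bfs d$ with $\delta(\bfs d)=b$. By the triangle inequality,
$$
\bigl|N_{\mathrm{irr}}^b - p_{D_b} p_n^{s-1}\bigr|
\le \bigl|N_{\mathrm{irr}}^{\bfs d^{(b)}} - p_{D_b} p_n^{s-1}\bigr|
+ \mathop{\sum_{\delta(\bfs d)=b}}_{\bfs d \ne \bfs d^{(b)}} N_{\mathrm{irr}}^{\bfs d}.
$$
Lemma \ref{lemma: estim abs irred hypersurfaces} immediately bounds the first summand, yielding the term $(1+14q^{-1})/q^{n-s+3}$ on the right-hand side of the claimed estimate, with the exceptional constant $14q^2$ in the range $b=2$, $n-s\le 3$ inherited verbatim from that lemma.

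For the second summand I would use the trivial bound $N_{\mathrm{irr}}^{\bfs d} \le p_{\bfs D(\bfs d)}$ together with inequality (\ref{hypersurfacePolys}), which gives $p_{\bfs D(\bfs d)} \le p_{D_b} p_n^{s-1}/q^{g(b)}$ for each $\bfs d \ne \bfs d^{(b)}$ with $\delta(\bfs d)=b$. Since the number of such patterns is at most $M_s(b) \le b^{\log_2\log_2 b}$ by Lemma \ref{numberFactorizations}, summing over them produces the second error term $b^{\log_2\log_2 b}/q^{g(b)}$. Dividing through by $p_{D_b}p_n^{s-1}$ gives the claimed inequality. No step is the true obstacle; the substantive work has already been done in Lemmas \ref{lemma: highest dimension}, \ref{numberFactorizations} and \ref{lemma: estim abs irred hypersurfaces}, and the assembly amounts to bookkeeping, with the only delicate point being the preservation of the case split inherited from the dominant stratum.
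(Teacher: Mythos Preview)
Your proposal is correct and follows essentially the same approach as the paper: split via the triangle inequality into the dominant pattern $\bfs d^{(b)}$ (handled by Lemma \ref{lemma: estim abs irred hypersurfaces}) and the remaining patterns (bounded trivially by $p_{\bfs D(\bfs d)}$, then by \eqref{hypersurfacePolys} and Lemma \ref{numberFactorizations}). The only minor imprecision is your claim that $N_{\mathrm{irr}}^b$ \emph{decomposes} as the sum of the $N_{\mathrm{irr}}^{\bfs d}$; strictly speaking the theorem counts only those $\bfs f$ whose variety is a hypersurface in a linear subspace, so one has $N_{\mathrm{irr}}^{\bfs d^{(b)}} \le N_{\mathrm{irr}}^b \le \sum_{\bfs d} N_{\mathrm{irr}}^{\bfs d}$ rather than equality, but this is exactly what the triangle-inequality step needs and the paper glosses over the same point.
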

\begin{proof}
%
Let $N_{\mathrm{irr}}^{\not=\bfs d^{(b)}}$ denote the number of
$\bfs f\in\Pp^{\bfs D(\bfs d)}(\fq)$ such that $Z(\bfs f)$ is an
absolutely irreducible complete intersection of dimension $n-s$ and
degree $b$, not having degree pattern $\bfs d^{(b)}$. We have
$$\left|N_{\mathrm{irr}}^b-p_{D_b}p_n^{s-1}\right|\le
\big|N_{\mathrm{irr}}^{\bfs d^{(b)}}-p_{D_b}p_n^{s-1}\big|+
N_{\mathrm{irr}}^{\not=\bfs d^{(b)}}.$$

On the one hand, Lemma \ref{lemma: estim abs irred hypersurfaces}
provides an upper bound for the first term in the right--hand side.
On the other hand, by Lemmas \ref{lemma: highest dimension} and
\ref{numberFactorizations} and
(\ref{eq: bound second highest dimension}), we find
\begin{equation}\label{eq: aux th abs irred varying degree pattern}
\begin{aligned}
N_{\mathrm{irr}}^{\not=\bfs d^{(b)}} & \le \mathop{\sum_{\delta(\bfs
d)=b}}_{\bfs d\not= \bfs d^{(b)}}p_{\bfs D(\bfs d)}\le
\mathop{\sum_{\delta(\bfs d)=b}}_{\bfs d\not= \bfs
d^{(b)}}\frac{p_{D_b}
p_n^{s-1}}{q^{g(b)}} \\
& \le M_s(b) \,\frac{p_{D_b}
p_n^{s-1}}{q^{g(b)}} \le
 b^{\log_2 \log_2 b} \,\frac{p_{D_b}
p_n^{s-1}}{q^{g(b)}}.
\end{aligned}
\end{equation}
Combining both inequalities, the theorem follows.
\end{proof}

We may express Theorem \ref{th: number abs irred hypersurfaces} in
terms of probabilities. Consider the set of all $\bfs f\in\Pp^{\bfs
D(\bfs d)}(\fq)$ when $\bfs d$ runs through all the degree patterns
with $\delta(\bfs d)=b$. If $\mathcal{P}_{\mathrm{irr}}^b$ denotes
the probability for a uniformly random $\bfs f$ to define an absolutely
irreducible complete intersection $Z(\bfs f)\subset\PpF^n$ of
dimension $n-s$ and degree $b$, then Theorem \ref{th: number abs
irred hypersurfaces} and \eqref{eq: aux th abs irred varying degree
pattern} say that
$$\mathcal{P}_{\mathrm{irr}}^b\ge
1-\frac{1+14\,q^{-1}}{q^{n-s+3}}
-\frac{2 b^{\log_2 \log_2 b}}{q^{g(b)}}$$
for $b>2$ or $n-s>3$.

\section{Open questions}
Several issues are left open in the context of this work.
\begin{itemize}
\item
We have worked exclusively in the projective setting and it remains
to adapt our approach to the affine case.
\item
The nonvanishing of our obstruction polynomials is sufficient to guarantee
the property that they work for.
Can one find exact obstructions for our properties that are necessary
and sufficient? We have not even determined the dimensions of
the sets of systems that violate the property.
\item
For a particular case of the previous question, see the remarks
after \eqref{eq: bound second highest dimension}. In that context,
can one determine the dimension of the set of $\bfs f\in \PpoK^{\bfs
D}$ not defining a normal, or absolutely irreducible, complete
intersection of dimension $n-s$ and degree $\delta$? Do both
dimensions agree? Are they equidimensional subvarieties of
$\PpoK^{\bfs D}$?
\item
Elucidate the relation between the two models of varieties:
systems of defining equations as in this paper, and Chow varieties.
For example, unions of lines occur in the Chow point of view for
curves in higher-dimensional spaces, but not in our considerations.
More specifically: what is the dimension of the set of
 systems of $s$ polynomials that define
finite unions of linear spaces, each of codimension $s$?
By \cite{kum10}, such a union is a set-theoretic complete intersection
if and only if it is connected (in the Zariski topology).
\item
Stephen Watt pointed out that one might investigate the genericity
of computational ``niceness'' properties,
such as a Gr\"obner basis computation in singly-exponential time.
\end{itemize}
%
%


\begin{thebibliography}{38}
\providecommand{\natexlab}[1]{#1}
\providecommand{\url}[1]{\texttt{#1}}
\providecommand{\urlprefix}{URL }
\providecommand{\selectlanguage}[1]{\relax}

\bibitem[{Bardet \emph{et~al.}(2013)Bardet, Faug\`ere, Salvy \&
  Spaenlehauer}]{BaFaSaSp13}
\textsc{M.~Bardet}, \textsc{J.-C. Faug\`ere}, \textsc{B.~Salvy} \&
  \textsc{P.-J. Spaenlehauer} (2013).
\newblock On the complexity of solving quadratic {Boolean} systems.
\newblock \emph{J. Complexity} \textbf{29}(1), 53--75.

\bibitem[{Benoist(2012)}]{Benoist12}
\textsc{O.~Benoist} (2012).
\newblock Degr\'es d'homog\'en\'eit\'e de l'ensemble des intersections
  compl\`etes singuli\`eres.
\newblock \emph{Ann. Inst. Fourier (Grenoble)} \textbf{62}(3), 1189–--1214.

\bibitem[{Berend \& Tassa(2010)}]{bertas10}
\textsc{D.~Berend} \& \textsc{T.~Tassa} (2010).
\newblock Improved bounds on Bell numbers and on moments of sums of random
  variables.
\newblock \emph{Probab. Math. Statist.} \textbf{30}(2), 185--205.

\bibitem[{Cafure \& Matera(2006)}]{CaMa06a}
\textsc{A.~Cafure} \& \textsc{G.~Matera} (2006).
\newblock Fast computation of a rational point of a variety over a finite
  field.
\newblock \emph{Math. Comp.} \textbf{75}(256), 2049--2085.

\bibitem[{Cafure \& Matera(2007)}]{CaMa07}
\textsc{A.~Cafure} \& \textsc{G.~Matera} (2007).
\newblock An effective {Bertini} theorem and the number of rational points of a
  normal complete intersection over a finite field.
\newblock \emph{Acta Arith.} \textbf{130}(1), 19--35.

\bibitem[{Cafure \emph{et~al.}(2012)Cafure, Matera \& Privitelli}]{CaMaPr12}
\textsc{A.~Cafure}, \textsc{G.~Matera} \& \textsc{M.~Privitelli}
(2012).
\newblock Singularities of symmetric hypersurfaces and {Reed}-{Solomon} codes.
\newblock \emph{Adv. Math. Commun.} \textbf{6}(1), 69--94.

\bibitem[{Cafure \emph{et~al.}(2015)Cafure, Matera \& Privitelli}]{CaMaPr15}
\textsc{A.~Cafure}, \textsc{G.~Matera} \& \textsc{M.~Privitelli}
(2015).
\newblock Polar varieties, {Bertini's} theorems and number of points of
  singular complete intersections over a finite field.
\newblock \emph{Finite Fields Appl.} \textbf{31}, 42--83.

\bibitem[{Carlitz(1963)}]{Carlitz63}
\textsc{L.~Carlitz} (1963).
\newblock The distribution of irreducible polynomials in several
  indeterminates.
\newblock \emph{Illinois J. Math.} \textbf{7}, 371--375.

\bibitem[{Carlitz(1965)}]{Carlitz65}
\textsc{L.~Carlitz} (1965).
\newblock The distribution of irreducible polynomials in several indeterminates
  {II}.
\newblock \emph{Canad. J. Math.} \textbf{17}, 261--266.

\bibitem[{Cayley(1845)}]{Cayley45}
\textsc{A.~Cayley} (1845).
\newblock On the theory of linear transformations.
\newblock \emph{Cambridge Math. J.} \textbf{4}, 1–--16.
\newblock {\em Collected papers, Vol. I}, pages 80--94, Cambridge Univ. Press,
  1889.

\bibitem[{Cesaratto \emph{et~al.}(2013)Cesaratto, von~zur {Gathen} \&
  Matera}]{CeGaMa13}
\textsc{E.~Cesaratto}, \textsc{J.~von~zur {Gathen}} \&
\textsc{G.~Matera}
  (2013).
\newblock The number of reducible space curves over a finite field.
\newblock \emph{J. Number Theory} \textbf{133}(4), 1409--1434.

\bibitem[{Cesaratto \emph{et~al.}(2014)Cesaratto, Matera, {P\'erez} \&
  Privitelli}]{CeMaPePr14}
\textsc{E.~Cesaratto}, \textsc{G.~Matera}, \textsc{M.~{P\'erez}} \&
  \textsc{M.~Privitelli} (2014).
\newblock On the value set of small families of polynomials over a finite
  field, {I}.
\newblock \emph{J. Combin. Theory Ser. A} \textbf{124}(4), 203--227.

\bibitem[{Cohen(1968/1969)}]{Cohen68}
\textsc{S.~Cohen} (1968/1969).
\newblock The distribution of irreducible polynomials in several indeterminates
  over a finite field.
\newblock \emph{Proc. Edinburgh Math. Soc. (2)} \textbf{16}, 1–17.

\bibitem[{Cox \emph{et~al.}(1998)Cox, Little \& O'Shea}]{CoLiOS98}
\textsc{D.~Cox}, \textsc{J.~Little} \& \textsc{D.~O'Shea} (1998).
\newblock \emph{Using algebraic geometry}, volume 185 of \emph{Grad. Texts in
  Math.}
\newblock Springer, New York.

\bibitem[{{D'Andrea} \emph{et~al.}(2013){D'Andrea}, Krick \& Sombra}]{DaKrSo13}
\textsc{C.~{D'Andrea}}, \textsc{T.~Krick} \& \textsc{M.~Sombra}
(2013).
\newblock Heights of varieties in multiprojective spaces and arithmetic
  {Nullstellens\"atze}.
\newblock \emph{Ann. Sci. \'Ec. Norm. Sup\'er. (4)} \textbf{46}(4), 571--649.

\bibitem[{Deligne(1974)}]{Deligne74}
\textsc{P.~Deligne} (1974).
\newblock La conjecture de {Weil}. {I}.
\newblock \emph{Inst. Hautes \'Etudes Sci. Publ. Math.} \textbf{43}, 273--307.

\bibitem[{Ding \emph{et~al.}(2006)Ding, Gower \& Schmidt}]{DiGoSc06}
\textsc{J.~Ding}, \textsc{J.~Gower} \& \textsc{D.~Schmidt} (2006).
\newblock \emph{Multivariate public key cryptosystems}.
\newblock Springer, New York.

\bibitem[{Eisenbud(1995)}]{Eisenbud95}
\textsc{D.~Eisenbud} (1995).
\newblock \emph{Commutative Algebra with a View Toward Algebraic Geometry},
  volume 150 of \emph{Grad. Texts in Math.}
\newblock Springer, New York.

\bibitem[{Eisenbud \& Harris(1992)}]{EiHa92}
\textsc{D.~Eisenbud} \& \textsc{J.~Harris} (1992).
\newblock The dimension of the {Chow} variety of curves.
\newblock \emph{Compositio Math.} \textbf{83}(3), 291--310.

\bibitem[{Fulton(1984)}]{Fulton84}
\textsc{W.~Fulton} (1984).
\newblock \emph{Intersection Theory}.
\newblock Springer, Berlin Heidelberg New York.

\bibitem[{Fusco \& Bach(2009)}]{FuBa09}
\textsc{G.~Fusco} \& \textsc{E.~Bach} (2009).
\newblock Phase transition of multivariate polynomial systems.
\newblock \emph{Math. Structures Comput. Sci.} \textbf{19}(1), 9--23.

\bibitem[{von~zur Gathen(2011)}]{gat11a}
\textsc{J.~von~zur Gathen} (2011).
\newblock Counting decomposable multivariate polynomials.
\newblock \emph{Appl. Algebra Engrg. Comm. Comput.} \textbf{22}(3), 165--185.
\newblock Abstract in \emph{Abstracts of the Ninth International Conference on
  Finite Fields and their Applications}, pages 21--22, Dublin, July 2009,
  Claude Shannon Institute,
  {\url{http://www.shannoninstitute.ie/fq9/AllFq9Abstracts.pdf}}.

\bibitem[{von~zur {Gathen} \emph{et~al.}(2013)von~zur {Gathen}, Viola \&
  Ziegler}]{GaViZi13}
\textsc{J.~von~zur {Gathen}}, \textsc{A.~Viola} \&
\textsc{K.~Ziegler} (2013).
\newblock Counting reducible, powerful, and relatively irreducible multivariate
  polynomials over finite fields.
\newblock \emph{SIAM J. Discrete Math.} \textbf{27}(2), 855--891.

\bibitem[{{Gel'fand} \emph{et~al.}(1994){Gel'fand}, Kapranov \&
  Zelevinsky}]{GeKaZe94}
\textsc{I.~{Gel'fand}}, \textsc{M.~Kapranov} \&
\textsc{A.~Zelevinsky} (1994).
\newblock \emph{Discriminants, resultants, and multidimensional determinants}.
\newblock Birkh\"auser Boston, Inc., Boston, MA.

\bibitem[{Ghorpade \& Lachaud(2002)}]{GhLa02a}
\textsc{S.~Ghorpade} \& \textsc{G.~Lachaud} (2002).
\newblock {\'Etale} cohomology, {Lefschetz} theorems and number of points of
  singular varieties over finite fields.
\newblock \emph{Mosc. Math. J.} \textbf{2}(3), 589--631.

\bibitem[{Harris(1992)}]{Harris92}
\textsc{J.~Harris} (1992).
\newblock \emph{Algebraic Geometry: a first course}, volume 133 of \emph{Grad.
  Texts in Math.}
\newblock Springer, New York Berlin Heidelberg.

\bibitem[{Heintz(1983)}]{Heintz83}
\textsc{J.~Heintz} (1983).
\newblock {Definability} and fast quantifier elimination in algebraically
  closed fields.
\newblock \emph{Theoret. Comput. Sci.} \textbf{24}(3), 239--277.

\bibitem[{Hooley(1991)}]{Hooley91}
\textsc{C.~Hooley} (1991).
\newblock On the number of points on a complete intersection over a finite
  field.
\newblock \emph{J. Number Theory} \textbf{38}(3), 338--358.

\bibitem[{Huang \& Wong(1999)}]{HuWo99}
\textsc{M.-D. Huang} \& \textsc{Y.-C. Wong} (1999).
\newblock Solvability of systems of polynomial congruences modulo a large
  prime.
\newblock \emph{Comput. Complexity} \textbf{8}(3), 227--257.

\bibitem[{Kumar(1990)}]{kum10}
\textsc{N.~Mohan Kumar} (1990).
\newblock Set--theoretic complete intersections.
\newblock In \emph{Proceedings of the Symposium on {Algebra} and {Number}
  {Theory} (Kochi, 1990)}, \textsc{A.M. Mathai}, editor, volume~20 of
  \emph{Publication}, 59--64. Centre for Mathematical Sciences, Trivandrum,
  Kerala, India.

\bibitem[{Kunz(1985)}]{Kunz85}
\textsc{E.~Kunz} (1985).
\newblock \emph{Introduction to Commutative Algebra and Algebraic Geometry}.
\newblock Birkh{\"a}user, Boston.

\bibitem[{Matera \emph{et~al.}(2014)Matera, {P\'erez} \& Privitelli}]{MaPePr14}
\textsc{G.~Matera}, \textsc{M.~{P\'erez}} \& \textsc{M.~Privitelli}
(2014).
\newblock On the value set of small families of polynomials over a finite
  field, {II}.
\newblock \emph{Acta Arith.} \textbf{165}(2), 141--179.

\bibitem[{Matera \emph{et~al.}(2016)Matera, {P\'erez} \& Privitelli}]{MaPePr15}
\textsc{G.~Matera}, \textsc{M.~{P\'erez}} \& \textsc{M.~Privitelli}
(2016).
\newblock Explicit estimates for the number of rational points of singular
  complete intersections over a finite field.
\newblock \emph{J. Number Theory} \textbf{158}, 54--72.

\bibitem[{Mullen \& Panario(2013)}]{MuPa13}
\textsc{G.~Mullen} \& \textsc{D.~Panario} (2013).
\newblock \emph{Handbook of finite fields}.
\newblock CRC Press, Boca Raton, FL.

\bibitem[{Shafarevich(1994)}]{Shafarevich94}
\textsc{I.R. Shafarevich} (1994).
\newblock \emph{Basic Algebraic Geometry: {Varieties} in Projective Space}.
\newblock Springer, Berlin Heidelberg New York.

\bibitem[{{Smith} \emph{et~al.}(2000){Smith}, {Kahanp\"a\"a}, {Kek\"al\"ainen}
  \& {Traves}}]{SmKaKeTr00}
\textsc{K.~{Smith}}, \textsc{L.~{Kahanp\"a\"a}},
\textsc{P.~{Kek\"al\"ainen}}
  \& \textsc{W.~{Traves}} (2000).
\newblock \emph{An invitation to algebraic geometry}.
\newblock Springer, New York.

\bibitem[{Vogel(1984)}]{Vogel84}
\textsc{W.~Vogel} (1984).
\newblock \emph{Results on {B\'ezout}'s theorem}, volume~74 of \emph{Tata Inst.
  Fundam. Res. Lect. Math.}
\newblock Tata Inst. Fund. Res., Bombay.

\bibitem[{Wolf \& Preneel(2005)}]{WoPr05}
\textsc{C.~Wolf} \& \textsc{B.~Preneel} (2005).
\newblock Taxonomy of Public Key Schemes based on the problem of Multivariate
  Quadratic equations.
\newblock Cryptology ePrint Archive, Report 2005/077.
\newblock \url{http://eprint.iacr.org/}.

\end{thebibliography}

\begin{thebibliography}{DKS11}

\bibitem[CLO98]{CoLiOS98}
{D.~Cox}, {J.~Little}, and {D.~O'Shea}, \emph{Using algebraic
geometry}, \emph{Grad. Texts in Math.}, vol. 185, Springer, New
York.

\bibitem[DKS13]{DaKrSo13}
C.~{D'Andrea}, T.~Krick, and M.~Sombra, \emph{Heights of varieties
in
  multiprojective spaces and arithmetic {Nullstellens\"atze}}, Ann. Sci. \'Ec.
  Norm. Sup\'er. (4) \textbf{46} (2013), no.~4, 571--649.

\bibitem[Eis95]{Eisenbud95}
D.~Eisenbud, \emph{Commutative algebra with a view toward algebraic
geometry},
  Grad. Texts in Math., vol. 150, Springer, New York, 1995.

  \bibitem[Sha94]{Shafarevich94}
I.R. Shafarevich, \emph{Basic algebraic geometry: {Varieties} in
projective
  space}, Springer, Berlin Heidelberg New York, 1994.

\end{thebibliography}
\end{document}

\end{document}

In particular, the polynomial
\begin{equation}\label{eq: aux pol number hypersurfaces}
g_k(X_{s-1},X_k)=f_1(0\klk 0,X_{s-1},0\klk 0,X_k,0\klk 0)
\end{equation}
is not squarefree for $s\le k\le n$.

Now we determine when the polynomials $g_k$ of (\ref{eq: aux pol
number hypersurfaces}) corresponding to a specialization of a
generic polynomial $\bfs\Lambda_1\cdot X$ of degree $b$ are not
squarefree for $s\le k\le n$. If this happens, then the
discriminant $\Delta_k\in\fq[\bfs\Lambda_1]$ with respect to $X_k$
of the polynomial
$$G_k(\bfs\Lambda_1,X_k)=
\bfs\Lambda_1\cdot (0\klk 0,1,0\klk 0,X_k,0\klk 0)$$
must vanish for $s\le k\le n$. By the expression of the discriminant
of a generic univariate polynomial over a given field (see, e.g.,
\cite{FrSm84}), we easily conclude that the polynomials
$\Delta_s\klk\Delta_n$ form a regular sequence of
$\fq[\bfs\Lambda_1]$. This shows that the set of elements
$\bfs\lambda_1\in \PpF^{D_b-1}$ for which the polynomials $g_k$ of
(\ref{eq: aux pol number hypersurfaces}) are not squarefree for
$s\le k\le n$ is contained in a projective variety of codimension
$n-s+1$ and degree at most $\big(b(b-1)\big)^{n-s+1}$. By (\ref{eq:
upper bound -- projective gral}) we deduce the bound
\begin{align*}
\left|N_{ci}^{\bfs d_b}-p_{D_b}p_n^{s-1}\right|&\le
\big(b(b-1)\big)^{n-s+1}p_{D_b-n+s-1}p_n^{s-1}\\ &\le
\bigg(\frac{b(b-1)}{q}\bigg)^{n-s+1}p_{D_b}p_n^{s-1}.
\end{align*}

On the other hand, by (\ref{eq: bound second highest dimension}) we
easily conclude that
${p_{\bfs D}}/{p_{D_b}p_n^{s-1}}\le q^{-g(d)}$
for any degree pattern $\bfs d\not=\bfs d_b$.